\theoremstyle{plain}
\newtheorem{thm}{Theorem}[section]
\newtheorem{cor}[thm]{Corollary}
\newtheorem{lem}[thm]{Lemma}
\newtheorem{conj}[thm]{Conjecture}
\theoremstyle{definition}
\theoremstyle{remark}
\newtheorem{rem}[thm]{Remark}
\newtheorem{ex}[thm]{Example}
\newcommand{\R}{\mathbb{R}}
\newcommand{\Z}{\mathbb{Z}}
\newcommand{\Q}{\mathbb{Q}}
\begin{document}

\title{Knot Floer Homology and Khovanov-Rozansky Homology for Singular Links}
\author{Nathan Dowlin}

\maketitle

\begin{abstract}

The (untwisted) oriented cube of resolutions for knot Floer homology assigns a complex $C_{F}(S)$ to a singular resolution $S$ of a knot $K$. Manolescu conjectured that when $S$ is in braid position, the homology $H_{*}(C_{F}(S))$ is isomorphic to the HOMFLY-PT homology of $S$. Together with a naturality condition on the induced edge maps, this conjecture would prove the spectral sequence from HOMFLY-PT homology to knot Floer homology. Using a basepoint filtration on $C_{F}(S)$, a recursion formula for HOMFLY-PT homology, and additional $sl_{n}$-like differentials on $C_{F}(S)$, we prove this conjecture. 

\end{abstract}

\section{Introduction}

The last few decades have seen tremendous growth within the field of knot theory. Many new knot invariants have been constructed, including the categorifications of several classical knot polynomials. These categorifications typically take the form of a multi-graded homology theory, whose graded Euler characteristic returns the polynomial in question. 

Some of the most notable categorifications include HOMFLY-PT homology, $sl_{n}$ homology, and knot Floer homology, whose graded Euler characteristics return the HOMFLY-PT polynomial, the $sl_{n}$ polynomial, and the Alexander polynomial, respectively. The HOMFLY-PT polynomial  of a link $L$ is a two variable polynomial $P_{H}(a,q)(L)$, and it is determined by the following skein relation:

\[ aP_{H}(a,q)(L_{+}) -a^{-1}P_{H}(a,q)(L_{-})= (q-q^{-1})P_{H}(a,q)(L_{0}) \]

\noindent
where $L_{+}$, $L_{-}$, and $L_{0}$ are identical except at one crossing, where $L_{+}$ has a positive crossing, $L_{-}$ has a negative crossing, and $L_{0}$ has the oriented smoothing (see \cite{HOMFLY} and \cite{PT}). Together with the normalization $P_{H}(a,q)(unknot) = 1$, this relation uniquely determines the HOMFLY-PT polynomial.

We can obtain a single-variable polynomial invariant $P_{n}(q)(L)$ by setting $a=q^{n}$ in the HOMFLY-PT polynomial. For $n \ge 1$, $ P_{n}(q)(L)$ gives the $sl_{n}$ polynomial, and setting $n=0$ gives the Alexander polynomial. The most popular of the $sl_{n}$ polynomials is the Jones polynomial, which is obtained by setting $n=2$, and $sl_{2}$ homology is isomorphic Khovanov's original categorification of the Jones polynomial, known as Khovanov homology (\hspace{1sp}\cite{Khov1}). An explicit description of this isomorphism is described in \cite{Hughes}.

The fact that the $sl_{n}$ and Alexander polynomials are specializations of the \linebreak HOMFLY-PT polynomial led to the following conjecture of Dunfield, Gukov, and Rasmussen:

\begin{conj} [\hspace{1sp}\cite{Gukov}]    \label{Gukthm}

For all $n \ge 1$, there is a spectral sequence from HOMFLY-PT homology to $sl_{n}$ homology, and there is a spectral sequence from HOMFLY-PT homology to knot Floer homology.

\end{conj}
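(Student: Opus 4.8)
The plan is to realize both the source and the target of each spectral sequence in Conjecture~\ref{Gukthm} through an \emph{oriented cube of resolutions}, and then to compare the two cubes vertex by vertex. Fix a braid diagram $\beta$ for $L$. Resolving each crossing of $\beta$ either as the oriented smoothing or as the singular (wide-edge) resolution produces a cube whose vertices are singular braids $S$. On one side, Khovanov and Rozansky assign to each such $S$ a complex $C_{KR}(S)$ whose homology $H(C_{KR}(S))$ is a bigraded module over the edge ring of $S$ (this module is the HOMFLY-PT homology of $S$), and HOMFLY-PT homology of $L$ is recovered as the homology of the induced edge differential on the cube $\{H(C_{KR}(S))\}$. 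On the other side, Ozsv\'ath and Szab\'o's untwisted cube of resolutions assigns to each $S$ the complex $C_F(S)$, and knot Floer homology of $L$ is the homology of the total complex of the resulting cube of complexes. The first step is then purely homological: filter the Ozsv\'ath--Szab\'o total complex by the cube grading. Its $E_1$-page is the cube complex formed by the homologies $H_*(C_F(S))$ with the induced edge maps, so \emph{provided} there is an isomorphism $H_*(C_F(S)) \cong H(C_{KR}(S))$ for every singular braid $S$ that is \emph{natural} with respect to the two elementary edge maps, this $E_1$-page is identified with the cube complex computing HOMFLY-PT homology, its $E_2$-page is the HOMFLY-PT homology of $L$, and the spectral sequence converges to knot Floer homology. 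This reduces the knot Floer half of Conjecture~\ref{Gukthm} to Manolescu's conjectured vertex isomorphism together with naturality of the edge maps.

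Second, I would establish the vertex isomorphism $H_*(C_F(S)) \cong H(C_{KR}(S))$ by induction on the complexity of $S$ --- say on the braid index together with the number of wide edges --- using a recursion. On the Khovanov--Rozansky side, $H(C_{KR}(S))$ satisfies categorified MOY-type relations: a wide edge adjacent to a digon or a square can be rewritten, up to quasi-isomorphism and explicit grading shifts, in terms of singular braids with strictly fewer wide edges, with base case a braid of pairwise non-interacting strands, whose homology is an explicitly computable module over the edge ring. The crux is to show that $H_*(C_F(S))$ obeys the \emph{same} recursion. To reach into the structure of $C_F(S)$ I would use a \emph{basepoint filtration}: filtering by the action of the formal variable attached to a chosen edge of $S$, the associated graded complex corresponds to the singular braid obtained by cutting that edge, so the inductive hypothesis applies to it; the resulting filtration spectral sequence leaves an extension problem, and this is where the auxiliary \emph{$sl_n$-like differentials} enter --- one verifies the recursion first after adjoining, for every $n$, the extra differential $d_n$ (whose effect is to specialize the edge ring and rigidify the module structure), and then reconstructs the undeformed statement from the family of deformed ones. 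Carried through, the induction identifies $H_*(C_F(S))$ with $H(C_{KR}(S))$ as bigraded modules over the edge ring.

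Third, one must upgrade this to naturality. The isomorphism produced by the induction is a priori defined only up to the choices made at each stage, whereas the spectral-sequence argument of the first step needs it to intertwine the Ozsv\'ath--Szab\'o edge maps with the Khovanov--Rozansky zip and unzip maps. I would arrange this by making the inductive identification compatible with the local cobordism maps at every step: prove a \emph{local} naturality statement for the elementary pieces (a single wide edge, a single digon, a single stabilization) and propagate it through the cube, using that each edge map is determined by its restriction to these local models together with functoriality under disjoint union and stabilization. I expect the verification that $C_F(S)$ genuinely satisfies the categorified MOY recursion to be the main obstacle --- delicate because $C_F(S)$ is presented as a quotient of a polynomial ring by relations of a rather different combinatorial flavour than the matrix-factorization model --- with the naturality of the edge maps a close second.

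Finally, the $sl_n$ half of Conjecture~\ref{Gukthm} fits into the same framework. The $sl_n$-like differentials promote each $C_F(S)$ to a complex whose homology is the $sl_n$ homology of the singular braid $S$; the corresponding deformation of the cube of resolutions computes $sl_n$ homology of $L$, and since it is filtered with associated graded the undeformed ($C_F$) cube --- equivalently, since the $sl_n$ complex is a deformation of the HOMFLY-PT complex --- the resulting spectral sequence runs from HOMFLY-PT homology to $sl_n$ homology of $L$. The case $n = 0$ (with $d_0 = 0$) is precisely the knot Floer statement established above, and the case $n = 2$, combined with the identification of $sl_2$ homology with Khovanov homology from \cite{Hughes}, gives the spectral sequence from HOMFLY-PT homology to Khovanov homology.
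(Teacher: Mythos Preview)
The statement you are addressing is Conjecture~\ref{Gukthm}, and the paper does \emph{not} prove it in full. The $sl_n$ half was already established by Rasmussen and is simply cited, not re-derived via the paper's machinery. For the knot Floer half, the paper carries out only your ``first step'' and part of your ``second step'': it proves the vertex isomorphism $H_{F}(S)\cong H_{H}(S)$ (Manolescu's Conjecture~\ref{Man}) but explicitly leaves the naturality of the edge maps --- your ``third step'' --- open. So your proposal is a plan for something strictly stronger than what the paper claims.

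Where the two overlap --- the vertex isomorphism --- the arguments are genuinely different. You propose induction on the complexity of $S$ via categorified MOY relations, with a basepoint filtration that cuts an edge and reduces to a simpler singular braid. That is not the paper's basepoint filtration: the paper places auxiliary basepoints in the \emph{complementary regions} of the Heegaard diagram (Section~\ref{filtration}), and the associated graded of $C_{F}(S)$ splits not as ``$S$ with an edge cut'' but as a direct sum over all multi-cycles $Z\subset S$, with the $Z$-summand computing $H_{H}(S-Z)$ (Lemma~\ref{bigthm}). The paper then recognizes this direct sum as an instance of the categorified \emph{composition product} (Theorem~\ref{thm2.1}), giving $H_{*}(C_{F}(S),d_{0})\cong H_{H}(S)\!<\!1\!>$. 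To collapse the remaining basepoint spectral sequence to $H_F(S)$, it adds the $sl_n$-like differentials to build $C_{F(n)}(S)$ with $H_*(C_{F(n)}(S))\cong H_{n+1}(S)$, and then invokes a rigidity result (Lemma~\ref{thelemma}): any bigraded vector space carrying a compatible family of such $d_n$'s with the correct homologies must already be $H_{H}(S)$. No MOY induction and no chain-level naturality argument appear.

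In short: your reduction (step one) matches the paper's framing; your route to the vertex isomorphism (step two) is different and rests on MOY relations for $C_F(S)$ that are not established here; and your step three, naturality of the edge maps, is exactly the piece the paper does not supply.
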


Rasmussen was able to use the similar constructions of HOMFLY-PT homology and $sl_{n}$ homology to prove the first part of this conjecture. In particular, he showed that there are a family of spectral sequences $E_{k}(n)$ for $n \ge 1$ such that the $E_{2}$ page is HOMFLY-PT homology and the $E_{\infty}$ page is $sl_{n}$ homology \cite{Rasmussen}.

Unfortunately, due to the fundamental differences between Khovanov-Rozansky homology and knot Floer homology, the second part of Conjecture \ref{Gukthm} has remained unsolved for the last decade. This conjectured spectral sequence from HOMFLY-PT homology to knot Floer homology will be the focus of this paper.

It turns out that all three of these homology theories can be constructed via an oriented cube of resolutions. Let $C_{H}(D, d_{0}^{H} + d_{1}^{H}+...+d_{k}^{H})$ denote the cube of resolutions for HOMFLY-PT homology, where $d_{i}^{H}$ denotes the component of the differential which increases the cube grading by $i$. The HOMFLY-PT homology $H_{H}(K)$ is defined to be 

\[ H_{H}(K) = H_{*}(H_{*}(C_{H}(D), d_{0}^{H}), (d_{1}^{H})^{*})   \]

\noindent
where $D$ is a braid diagram of the knot $K$ and $(d_{1}^{H})^{*}$ is the induced map on homology. HOMFLY-PT homology is a knot invariant, which means that the homology does not depend on the choice of braid diagram for $K$. Note that this homology can be equivalently viewed as the $E_{2}$ page of the spectral sequence induced by the cube filtration on the HOMFLY-PT complex. 

The $sl_{n}$ homology $H_{n}(K)$ is defined in the same way. If $C_{n}(D, d_{0}^{(n)} + d_{1}^{(n)}+...+d_{k}^{(n)})$ denotes the cube of resolutions for $sl_{n}$ homology, then $H_{n}(K)$ is given by 

\[ H_{n}(K) = H_{*}(H_{*}(C_{n}(D), d_{0}^{(n)}), (d_{1}^{(n)})^{*})   \]

\noindent
where again, $D$ is a braid diagram for $K$, but the homology is independent of the choice of $D$.

Knot Floer homology is a completely different story. There is not a standard way to define the complex $\mathit{CFK}^{-}(K)$, as it depends on a choice of Heegaard diagram for $K$, and there are many different ways to do so for a knot $K$. However, the chain homotopy type of $\mathit{CFK}^{-}(K)$ does not depend on the choice of diagram. 

Using a particular choice of Heegaard diagram together with an algebraic component, Oszvath and Szabo developed an oriented cube of resolutions for knot Floer homology with twisted coefficients (\hspace{1sp}\cite{Szabo}). This construction was modified by Manolescu to give an untwisted cube of resolutions for knot Floer homology, which is chain homotopy equivalent to $\mathit{CFK}^{-}(K)$. We will denote this complex $(C_{F}(D), d_{0}^{F} + d_{1}^{F}+...+d_{k}^{F})$. Unlike the HOMFLY-PT and $sl_{n}$ homology, the knot Floer homology $\mathit{HFK}^{-}(K)$ is the \emph{total} homology of this complex.

\[   \mathit{HFK}^{-}(K) \cong H_{*}(C_{F}(D), d_{0}^{F} + d_{1}^{F}+...+d_{k}^{F})  \]

For all of these complexes, each vertex in the cube of resolutions can be viewed as a complex corresponding to the complete resolution $S$ at that vertex. We will denote these complexes by $C_{H}(S)$, $C_{n}(S)$, and $C_{F}(S)$, with the corresponding homologies given by $H_{H}(S)$, $H_{n}(S)$, and $H_{F}(S)$. 

Manolescu made the following conjecture:

\begin{conj} [\hspace{1sp}\cite{Manolescu}] \label{Man}

Let $S$ denote a complete resolution of a diagram $D$ in braid position. Then $ H_{H}(S) \cong H_{F}(S)$ as bigraded vector spaces.

\end{conj}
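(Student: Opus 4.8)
The plan is to prove the isomorphism by induction on the complexity of the singular resolution $S$, reducing a general $S$ to strictly simpler singular braids by a recursion that both $H_H(S)$ and $H_F(S)$ can be shown to satisfy, and then checking the base case directly. Both $H_H(S)$ and $H_F(S)$ are finitely generated modules over the ring $R(S)$ generated by the edge variables of $S$, and each carries two gradings (the $q$-grading and the $a$-grading). So the first step is to pin down, once and for all, the identification of these gradings and of the $R(S)$-module structures on the two sides, so that the eventual isomorphism can be checked to be bigraded and not merely an isomorphism of vector spaces.

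The second step is the HOMFLY-PT side: I would record the categorified MOY-type relations --- digon removal, the square (or ``dumbbell'') relation, and the relation for a singular edge meeting a trivalent vertex --- which express $H_H(S)$, after choosing a distinguished edge or vertex of $S$, as a direct sum or mapping cone built from $H_H$ of singular braids with strictly fewer singular vertices, up to explicit grading shifts. This is the ``recursion formula for HOMFLY-PT homology'' and it reduces the whole problem to (i) a base case for the simplest $S$ and (ii) an identical recursion on the knot Floer side.

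The third, and main, step is the knot Floer side. The obstruction here is that $H_F(S) = H_*(C_F(S), d_0^F)$ cannot be computed directly because $d_0^F$ contains complicated ``outer'' (algebraic) terms. The idea is to enlarge $C_F(S)$ by adjoining additional $sl_n$-like differentials $\partial_1, \partial_2, \dots$ --- maps on $C_F(S)$ modeled on the edge maps of the $sl_n$ cube of resolutions --- chosen so that the enlarged complex $(C_F(S), d_0^F + \sum_j \partial_j)$ has, after an explicit change of basis, the form of the HOMFLY-PT Koszul model and hence homology transparently isomorphic to $H_H(S)$. One then puts the basepoint filtration on $C_F(S)$: filtering by the power of a basepoint variable on a distinguished edge, in such a way that on the associated graded the troublesome part of $d_0^F$ degenerates while the auxiliary $\partial_j$ occupy strictly positive filtration levels. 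Running the resulting spectral sequence, the key claim is that its pages are governed precisely by the MOY recursion of the second step --- each differential either vanishes or reproduces one elementary simplification of $S$ --- so that the $E_\infty$-page has the same bigraded rank as $H_H(S)$, and the filtration on $H_F(S)$ it computes is the trivial one. This yields $H_F(S) \cong H_H(S)$.

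I expect the main obstacle to be exactly this spectral-sequence comparison: one must control the interaction of the outer part of $d_0^F$ with the basepoint filtration and show that the higher differentials collapse to the square and digon relations rather than producing extra cancellation. Concretely, for each elementary simplification of $S$ one needs an explicit filtered homotopy equivalence compatible with all the $\partial_j$, inducing on homology precisely the connecting maps of the HOMFLY-PT recursion (grading shifts included). The base case --- $S$ a trivial braid with no singular vertices, where $C_F(S)$ and $C_H(S)$ are each, up to an overall shift, a polynomial ring on the strand variables tensored with an exterior algebra --- is a direct computation, and the induction then closes.
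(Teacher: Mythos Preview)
Your proposal takes a genuinely different route from the paper, and in its present form it has a real gap.

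The paper does \emph{not} proceed by induction on the complexity of $S$, nor does it invoke MOY-type local relations on either side. Instead, it (i) proves a categorified composition product formula expressing $H_{H}(S)\langle 1\rangle$ as $\bigoplus_{f} H_{1}(S_{f,1})\otimes H_{H}(S_{f,2})$ with explicit shifts; (ii) proves a rigidity lemma (Lemma~\ref{thelemma}) saying that \emph{any} bigraded vector space carrying, for all large $n$, a differential of bidegree $\{2n,-2\}$ whose homology is $H_{n}(S)$ is forced to be $H_{H}(S)$; (iii) shows the basepoint filtration on $C_{F}(S)$ has $E_{1}\cong H_{H}(S)\langle 1\rangle$ via the same composition product; and (iv) builds complexes $C_{F(n)}(S)$ with total homology $H_{n+1}(S)$, so that the induced $d_{n}$'s on $E_{1}$ anti-commute with any surviving basepoint differential $d_{F}$. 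The collapse of the basepoint spectral sequence is then a short contradiction: pick $n$ so large that $d_{F}(x)$ cannot be a $d_{n}$-boundary, and use anti-commutation.

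Your plan replaces (i)--(ii) by an inductive MOY recursion and asserts that the basepoint spectral sequence on the Floer side ``is governed precisely by the MOY recursion''. This is exactly the step that has no argument. The whole difficulty of the problem, as the paper notes, is that the Floer ideal $N$ is non-local: $H_{F}(S)$ is \emph{not} known to satisfy digon or square relations that would let you strip off a vertex and induct. Your spectral-sequence claim---that higher differentials ``either vanish or reproduce one elementary simplification of $S$''---is the conjecture restated in local language, not a mechanism for proving it. Moreover, your description of the enlarged complex is off: adding the $sl_{n}$-type differentials to $C_{F}(S)$ produces a complex whose total homology is $H_{n+1}(S)$, not $H_{H}(S)$; the r\^ole of these differentials is not to make the homology ``transparently HOMFLY-PT'' but to supply the family $\{d_{n}\}$ needed for the rigidity argument. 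If you want to salvage an inductive approach you would need, for each MOY move, a filtered chain map on $C_{F}$ inducing the correct map on $H_{F}$---and that is not available with current technology.
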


An immediate consequence of this conjecture is an isomorphism 

\[H_{*}(C_{H}(D), d_{0}^{H}) \cong H_{*}(C_{F}(D), d_{0}^{F})\]

\noindent
which maps each vertex in the HOMFLY-PT cube of resolutions to the same vertex in the knot Floer cube of resolutions. As discussed in \cite{Manolescu}, the spectral sequence from HOMFLY-PT homology to knot Floer homology would then follow from the induced edge maps being the same. In other words, if $f$ is the isomorphism between them, then the square below being commutative would prove the spectral sequence.

\begin{figure}[!h]
\centering
\begin{tikzpicture}
  \matrix (m) [matrix of math nodes,row sep=5em,column sep=6em,minimum width=2em] {
     H_{*}(C_{H}(D), d_{0}^{H}) & H_{*}(C_{F}(D), d_{0}^{F}) \\
     H_{*}(C_{H}(D), d_{0}^{H})& H_{*}(C_{F}(D), d_{0}^{F}) \\};
  \path[-stealth]
    (m-1-1) edge node [left] {$(d_{1}^{H})^{*}$} (m-2-1)
    (m-1-1) edge node [above] {$f$} (m-1-2)
    (m-1-2) edge node [right] {$(d_{1}^{F})^{*}$} (m-2-2)
    (m-2-1) edge node [above] {$f$} (m-2-2);
\end{tikzpicture}
\end{figure}

This idea can also be explained in terms of the spectral sequences induced by the cube filtrations on $C_{H}(D)$ and $C_{F}(D)$. Letting $E_{k}^{H}(D)$ and $E_{k}^{F}(D)$ denote the two spectral sequences, we see that the HOMFLY-PT homology is given by $E_{2}^{H}(D)$, and the knot Floer homology is given by $E_{\infty}^{F}(D)$. Manolescu's conjecture would imply an isomorphism $E_{1}^{H}(D) \cong E_{1}^{F}(D)$, and the induced edge maps commuting with this isomorphism would imply that $E_{2}^{H}(D) \cong E_{2}^{F}(D)$. This gives a spectral sequence whose $E_{2}$ page is isomorphic to HOMFLY-PT homology and whose $E_{\infty}$ page is $\mathit{HFK}^{-}(K)$.

Manolescu showed that for a singular braid $S$, both $H_{H}(S)$ and $H_{F}(S)$ have a purely algebraic formulation in terms of Tor groups. Letting $R$ denote the polynomial ring $\Q[U_{1},...,U_{k}]$, where $k$ is the number of edges in the singular braid $S$, he showed that there are ideals $L$, $Q$, and $N$ in $R$ such that 

\[H_{H}(S) \cong \mathit{Tor}_{R}(R/L, R/Q) \text{ \hspace{3mm}and\hspace{3mm} } H_{F}(S) \cong \mathit{Tor}_{R}(R/L, R/N) \]

\noindent

These Tor groups can naturally be viewed as bigraded vector spaces, where the dimension in each bigrading is finite. Thus, Conjecture \ref{Man} is equivalent to an isomorphism bigraded vector spaces

\[ \mathit{Tor}_{R}(R/L, R/Q) \cong \mathit{Tor}_{R}(R/L, R/N)  \]

\noindent
Unfortunately, these Tor groups turned out to be difficult to compare due to the non-local nature of the ideal $N$.

In this paper, we will prove Conjecture \ref{Man} using a very different approach. First, we define an additional family of differentials on $C_{F}(S)$ for all $n \ge 1$. We will denote this complex by $C_{F(n)}$. 

\begin{thm}

For all $n \ge 1$, there is an isomorphism 

\[H_{*}(C_{F(n)}(S)) \cong H_{n+1}(S) \]

\end{thm}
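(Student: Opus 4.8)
The plan is to prove this by identifying the $n$-dependent differentials on $C_F(S)$ with the algebraic structure that governs $sl_{n+1}$ homology, working through the Tor-algebraic model wherever possible. First I would recall Manolescu's identification $H_F(S) \cong \mathit{Tor}_R(R/L, R/N)$ and observe that the extra differentials defining $C_{F(n)}(S)$ should be chosen so that, at the level of the Koszul-type resolution computing this Tor group, they implement the substitution $a = q^{n+1}$ — that is, they deform the ideal $L$ (coming from the ``linear'' or local relations at each singular vertex) in the way that passing from the HOMFLY-PT edge ring to the $sl_{n+1}$ edge ring does. Concretely, I expect $C_{F(n)}(S)$ to carry a filtration whose associated graded recovers $C_F(S)$ with its original differential, and the new differential $d_n$ raises the filtration by one; the task is then to compute the homology of the total complex.

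Next I would set up the comparison with $H_{n+1}(S)$ on the $sl_{n+1}$ side. By the standard matrix-factorization / Soergel-bimodule description, $H_{n+1}(S)$ for a singular braid resolution $S$ is also computable as a Tor group (or a Koszul homology) over the edge ring $R = \Q[U_1,\dots,U_k]$, where now one quotients by the relations that say each potential $U_i^{n+1}$ (or the relevant power-sum symmetric function) is exact — this is exactly the Rasmussen-style deformation that produces the $E_\infty = sl_n$ homology from the $E_2 = $ HOMFLY-PT homology. The key structural claim is that $C_{F(n)}(S)$ and the $sl_{n+1}$ complex for $S$ admit compatible filtrations with isomorphic associated graded pieces (both being $C_F(S)$, by Conjecture \ref{Man} applied vertex-by-vertex, which we may assume is available for the associated graded), and that the first differentials match. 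I would make this precise by a local computation: since $S$ is a braid, it decomposes into elementary pieces (single singular points and strands), and both complexes are tensor products over the edge rings of the local pieces, so it suffices to check the matching of the $d_n$ differential on a single singular vertex and confirm that the gluing (tensor product over $R$) is compatible.

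The main obstacle I anticipate is showing that the spectral sequence of the filtration on $C_{F(n)}(S)$ degenerates at the expected page, i.e.\ that no higher differentials appear beyond those already accounted for, and simultaneously that the induced first differential on $H_F(S)$ agrees with the one on $H_{n+1}(S)$ under the vertex-wise isomorphism. This is delicate because the isomorphism $H_H(S) \cong H_F(S)$ is a priori only an abstract isomorphism of bigraded vector spaces, not obviously one respecting the extra algebraic operations; so I would need an explicit, natural form of that isomorphism — presumably the one constructed earlier in the paper via the basepoint filtration and the HOMFLY-PT recursion formula — that is compatible with the $U_i$-actions, and then check that the $d_n$ differentials are built from those $U_i$-actions in the same way on both sides. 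A clean way to organize this is to prove a uniqueness statement: any complex with the given associated graded $C_F(S)$ and the given first differential has homology determined by that data, so it is enough to exhibit \emph{one} model (the $sl_{n+1}$ complex) realizing it.

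Finally, I would assemble the pieces: establish the filtration and compute its associated graded on both $C_{F(n)}(S)$ and the $sl_{n+1}$ resolution complex; invoke the vertex-wise isomorphism $C_F(S) \simeq C_H(S)$ together with the $sl_{n+1}$-specialization of HOMFLY-PT to match $E_1$ pages; verify the $d_1$'s agree via the local singular-vertex computation and compatibility with tensor products over $R$; and conclude $H_*(C_{F(n)}(S)) \cong H_{n+1}(S)$. I would expect the induction (on the number of singular points or crossings, driving the HOMFLY-PT-style recursion) to be the scaffolding that makes the local-to-global step rigorous.
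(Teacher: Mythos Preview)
Your proposal has a genuine circularity problem and misses the key mechanism the paper uses.

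First, the circularity: you write that the associated graded pieces on both sides are identified ``by Conjecture \ref{Man} \ldots\ which we may assume is available.'' But in the paper's logical order, the isomorphism $H_*(C_{F(n)}(S)) \cong H_{n+1}(S)$ is established \emph{before} Manolescu's conjecture and is in fact one of the main inputs to its proof (see the use of (\ref{slnlemma}) in Section~4.4). You cannot assume $H_F(S)\cong H_H(S)$ here; that is what the whole paper is building toward.

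Second, your local-to-global strategy does not match the actual structure of $C_{F(n)}(S)$. The Floer complex $\mathit{CFK}^-_n(S)$ is not a tensor product of local vertex pieces over $R$; it is a holomorphic-disk count on a global Heegaard diagram, so there is no elementary ``check at a single singular vertex and tensor'' argument available. The paper instead uses the \emph{basepoint filtration}, which decomposes the associated graded over multi-cycles $Z\subset S$, and shows directly that for each $Z$ the piece $(C_{F(n)}(Z),d_{0\pm})$ computes $H^{\pm}(C_n(S-Z))$, i.e.\ the $sl_n$ (not $sl_{n+1}$) homology of the smaller diagram $S-Z$. The passage from this direct sum $\bigoplus_Z H_n(S-Z)\{\cdots\}$ to $H_{n+1}(S)$ is not an $E_1$-matching argument at all --- it is Wagner's categorified composition product (equation (\ref{wagnercomp}) in Section~\ref{bigradedcomp}), which identifies $\bigoplus_f H_1(S_{f,1})\otimes H_n(S_{f,2})\{\cdots\}$ with $H_{n+1}(S)$. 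Your outline never invokes this formula, and without it there is no way to collapse the cycle-indexed sum into $H_{n+1}(S)$.

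Finally, the degeneration you worry about is handled in the paper not by a uniqueness statement or by matching $d_1$'s, but by a grading argument: one shows $H^{\pm}(C_{F(n)}(S))$ is concentrated in a single horizontal grading, while every differential changes $gr_h$ by $2\pmod 4$, so all higher differentials vanish for parity reasons. Your plan to compare induced $d_1$ differentials under an abstract vector-space isomorphism is exactly the step the paper is designed to avoid, because (as you yourself note) naturality of that isomorphism is not available.
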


The differential on $C_{F(n)}$ can be filtered by the Alexander grading, and when we only consider those differentials which preserve the Alexander grading, we get back the complex $C_{F}(S)$. Thus, using the Alexander filtration on $C_{F(n)}$, we get the following corollary:

\begin{cor}

For all $n \ge 2$, there is a spectral sequence which starts at $H_{F}(S)$ and converges to $H_{n}(S)$.

\end{cor}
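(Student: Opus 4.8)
The plan is to obtain the corollary as a formal consequence of the Theorem together with the standard spectral sequence of a filtered complex, so the work is almost entirely bookkeeping with the Alexander grading. Fix $n \ge 2$ and consider the complex $C_{F(n-1)}(S)$, which makes sense because $n-1 \ge 1$. Write its differential as $D = \sum_{j \ge 0} D_j$, where $D_j$ is the component lowering the Alexander grading by $j$; as recorded in the paragraph preceding the statement, $D$ has no components raising the Alexander grading, and $D_0$ is precisely the differential of $C_F(S)$. Consequently the subspaces $F_p \subseteq C_{F(n-1)}(S)$ spanned by the generators of Alexander grading at most $p$ form a decreasing filtration preserved by $D$, and I would take the associated spectral sequence $E_\bullet$.

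Next I would identify the first page. The associated graded complex $\bigoplus_p F_p/F_{p-1}$ carries the induced differential $D_0$, which by the previous paragraph is the knot Floer differential $d$ on $C_F(S)$; hence, after the usual reindexing, $E_1 \cong H_*(C_F(S), d) = H_F(S)$. So the spectral sequence starts at $H_F(S)$. For convergence I would note that, for a fixed complete resolution $S$ of a fixed braid diagram, the underlying vector space $C_F(S)$ is finite-dimensional and the Alexander grading takes only finitely many values on it; thus the filtration $F_\bullet$ is bounded, and the spectral sequence converges to $H_*(C_{F(n-1)}(S))$. By the Theorem, $H_*(C_{F(n-1)}(S)) \cong H_{(n-1)+1}(S) = H_n(S)$, which yields the asserted spectral sequence from $H_F(S)$ to $H_n(S)$, valid precisely for $n \ge 2$ since the family $C_{F(m)}$ is only defined for $m \ge 1$.

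Since the two structural inputs — that $D$ never raises the Alexander grading, and that its Alexander-preserving part equals $d$ on the nose — are already built into the definition of $C_{F(n)}$ and asserted above, there is no real obstacle here; the only thing genuinely needing a line of justification is the boundedness of the Alexander filtration on the fixed finite-dimensional complex $C_F(S)$, after which the corollary is immediate from the standard theory of filtered complexes. I would therefore keep the write-up short, citing the construction of $C_{F(n)}$ for the grading behavior of $D$ and the Theorem for the value of the total homology.
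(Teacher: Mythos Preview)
Your overall strategy---filter $C_{F(n-1)}(S)$ by the Alexander grading, identify the associated graded with $(C_F(S),d)$, and invoke the Theorem for the abutment---is exactly the paper's approach. The paper's proof says only that the new differentials strictly decrease the Alexander grading while the old ones preserve it, so the $E_1$ page is $H_F(S)$ and the spectral sequence converges to $H_{n+1}(S)$; your reindexing $n\mapsto n-1$ to hit $H_n(S)$ for $n\ge 2$ is the right bookkeeping.

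There is, however, a genuine gap in your convergence argument. The complex $C_F(S)=\mathit{CFK}^{-}(S)\otimes K(S)$ is a finitely generated module over $R=\Q[U_1,\dots,U_k]$, not a finite-dimensional $\Q$-vector space: multiplication by each $U_i$ lowers the Alexander grading by $1$, so the Alexander grading is unbounded below and your filtration $F_\bullet$ is \emph{not} bounded. The claim ``the Alexander grading takes only finitely many values'' is simply false for the minus version.

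The paper does not spell out convergence either, but one clean fix is to use the auxiliary grading $gr_{n-1}=gr_q+\tfrac{n-2}{2}\,gr_h$, with respect to which the total differential on $C_{F(n-1)}(S)$ is homogeneous. The horizontal grading $gr_h=-2M+4A$ is unchanged by the $U_i$ and hence globally bounded; for fixed $gr_{n-1}$ this forces $gr_q$, and therefore $A=(gr_h-gr_q)/2$, to take only finitely many values. Thus the Alexander filtration is bounded on each $gr_{n-1}$-homogeneous piece, and the spectral sequence converges there, hence overall. Replace your finite-dimensionality line with this and the proof goes through.
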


\noindent
For all $n \ge 1$, there is also a known spectral sequence which starts at $H_{H}(S)$ and converges to $H_{n}(S)$ (\hspace{1sp}\cite{Rasmussen}). Thus, $H_{H}(S)$ and $H_{F}(S)$ are both `limits' of $sl_{n}$ homology (in a suitable sense). 

We are able to use these additional differentials together with a basepoint filtration to prove Conjecture \ref{Man}.

\begin{thm}

Let $S$ denote a complete resolution of a diagram $D$ in braid position. Then $ H_{H}(S) \cong H_{F}(S)$ as bigraded vector spaces.

\end{thm}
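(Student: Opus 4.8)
The plan is to prove the isomorphism by induction on the number of singular points of $S$ --- equivalently, on the number of wide edges of the associated MOY graph --- producing at each stage an isomorphism of bigraded vector spaces. The base case is the case in which $S$ has no singular points: then every crossing of $D$ is resolved by its oriented smoothing, $S$ is a disjoint union of embedded circles, and $H_{H}(S)$ and $H_{F}(S)$ are each computed directly as a tensor product of one copy of a fixed two-dimensional bigraded space for every circle, with matching gradings.

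For the inductive step, fix a singular point $p$ of $S$. The \emph{recursion formula for HOMFLY-PT homology} --- obtained from the categorified MOY relations applied locally near $p$ --- expresses $H_{H}(S)$, as a bigraded vector space, as a direct sum $\bigoplus_{i} H_{H}(S_{i})\{\sigma_{i}\}$, where each $S_{i}$ is a complete resolution with strictly fewer singular points than $S$ and $\sigma_{i}$ is an explicit bigrading shift. I would first record this recursion, tracking both the quantum grading and the $a$-grading, and verify that iterating it terminates (after finitely many steps) at disjoint unions of circles, so that the inductive hypothesis applies to all of the $S_{i}$.

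The heart of the argument is then to produce the same recursion on the knot Floer side. Since the ideal $N$ with $H_{F}(S)\cong \mathit{Tor}_{R}(R/L,R/N)$ is non-local, I would avoid $\mathit{Tor}$ altogether and instead place the \emph{basepoint filtration} on $C_{F}(S)$, filtering by the powers of the variable $U_{e}$ attached to an edge $e$ incident to $p$. One checks that the associated graded of this filtration is assembled from the complexes $C_{F}(S_{i})$ with exactly the shifts $\sigma_{i}$ appearing above, so that the induced spectral sequence has $E_{1}$-page isomorphic, by the inductive hypothesis, to $\bigoplus_{i} H_{H}(S_{i})\{\sigma_{i}\} = H_{H}(S)$ as bigraded vector spaces. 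It therefore suffices to show this spectral sequence degenerates at $E_{1}$. Any higher differential would make $H_{F}(S)$ strictly smaller than $H_{H}(S)$; but the Corollary provides, for every $n\ge 2$, a spectral sequence from $H_{F}(S)$ to $H_{n}(S)$, so $\dim H_{F}(S)\ge \dim H_{n}(S)$, and for $n$ sufficiently large the Rasmussen spectral sequence from $H_{H}(S)$ to $H_{n}(S)$ degenerates (as $S$ is a fixed finite diagram), whence $\dim H_{n}(S)=\dim H_{H}(S)$. Combining these gives $\dim H_{F}(S)\ge \dim H_{H}(S)$, which together with the upper bound from the basepoint spectral sequence forces degeneration; hence $H_{F}(S)\cong E_{1}\cong H_{H}(S)$ as bigraded vector spaces, completing the induction.

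The step I expect to be the main obstacle is identifying the associated graded of the basepoint filtration on $C_{F}(S)$ with the complexes $C_{F}(S_{i})$, with the correct shifts --- that is, checking that the edge maps in the knot Floer cube of resolutions interact with the basepoint filtration near $p$ in precisely the combinatorial pattern that the categorified MOY maps do on the HOMFLY-PT side. This is ultimately a local computation at the singular point, but it has to be carried out with enough uniformity that the single complex $C_{F}(S)$ and the whole family $C_{F(n)}(S)$ (whose homologies are the $H_{n+1}(S)$) can be played against one another to force the degeneration. Granting this compatibility, the base case, the grading bookkeeping, and the concluding dimension count should be routine.
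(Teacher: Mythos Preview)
Your degeneration argument has a genuine gap. You assert that for $n$ sufficiently large the Rasmussen spectral sequence from $H_{H}(S)$ to $H_{n}(S)$ degenerates at $E_{1}$, so that $\dim H_{n}(S)=\dim H_{H}(S)$. This is true for honest knots (where $H_{H}$ is finite-dimensional and the $d_{-}(n)$ differential eventually overshoots the bounded support), but it is \emph{false} for singular braids $S$. For every $n$, the homology $H_{n}(S)$ is concentrated in the single horizontal grading $gr_{h}=2r(S)$, whereas $H_{H}(S)$ is spread over a range of horizontal gradings and is infinite-dimensional over $\mathbb{Q}$ (already for a single circle with one bivalent vertex one computes $H_{H}\cong\mathbb{Q}[U]\oplus\mathbb{Q}[U]$). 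Thus $d_{-}(n)$ is nonzero on $H_{H}(S)$ for all $n$, there is no value of $n$ with $H_{n}(S)\cong H_{H}(S)$, and a naive dimension count $\dim H_{F}(S)\ge\dim H_{n}(S)=\dim H_{H}(S)$ is unavailable (and indeed meaningless, as all three spaces are infinite-dimensional). Your proposed squeeze between the upper bound from the basepoint spectral sequence and the lower bound from the $sl_{n}$ spectral sequence therefore does not close.

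The paper's proof replaces the dimension count with structure: the first potentially nonzero higher differential $d_{F}$ in the basepoint spectral sequence and the $sl_{n}$-type differential $d_{n}$ on $H_{H}(S)\langle 1\rangle$ \emph{anticommute} (both live on the $E_{1}$ page of a common bifiltered complex $C_{F(n)}(S)$). One then argues by contradiction on the minimal horizontal grading where $d_{F}\ne 0$: if $d_{F}(x)=y\ne 0$, choose $N$ large enough that $y$ cannot lie in $\mathrm{im}\,d_{N}$ (using that $gr_{q}$ is bounded below); since $y$ sits above the horizontal grading $2r(S)$ it is not a $d_{N}$-cycle either, so $d_{N}d_{F}(x)\ne 0$, while $d_{F}d_{N}(x)=0$ by minimality of the horizontal grading --- contradicting anticommutation. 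This is exactly the refinement of your idea that survives the infinite-dimensionality: rather than comparing total ranks, one exploits that each $d_{N}$ has bidegree $(2N,-2)$, so for $N$ large it becomes injective on any fixed bigrading. Your inductive MOY-type recursion and the local $U_{e}$-filtration are not used; the paper's basepoint filtration is global (one marking per planar region) and its associated graded decomposes over multi-cycles $Z\subset S$ into pieces $H_{H}(S-Z)$, which is then matched to $H_{H}(S)\langle 1\rangle$ via the categorified composition product rather than via MOY moves.
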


\begin{cor}

Let $D$ be a braid diagram and $E^{F}_{2}(D)$ the $E_{2}$ page of the spectral sequence on $C_{F}(D)$ induced by the cube filtration. Then the graded Euler characteristic of $E^{F}_{2}(D)$ is the HOMFLY-PT polynomial.

\end{cor}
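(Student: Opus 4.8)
The plan is to derive the corollary formally from the Theorem and the invariance of the graded Euler characteristic under passage to homology; unlike the conjectured spectral sequence from HOMFLY-PT homology to knot Floer homology, this uses neither the higher differentials of the cube filtration nor any naturality of the edge maps -- only the vertexwise isomorphism supplied by the Theorem.

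First I would identify the two $E_1$-pages of the cube-filtration spectral sequences. The vertices of the cube of resolutions of $D$ are the complete resolutions $S$, and since the internal differentials $d_0^{F}$ and $d_0^{H}$ are block diagonal with respect to these vertices,
\[
E_1^{F}(D) \;\cong\; \bigoplus_{S} H_{F}(S), \qquad E_1^{H}(D) \;\cong\; \bigoplus_{S} H_{H}(S),
\]
each summand placed at its cube grading and shifted according to the prescription of $D$ at that vertex. This combinatorial data depends only on $D$, not on which homology theory occupies the vertices. By the Theorem, $H_{H}(S) \cong H_{F}(S)$ as bigraded vector spaces for every complete resolution $S$ of a diagram in braid position, so $E_1^{F}(D)$ and $E_1^{H}(D)$ have the same graded dimensions term by term, whence $\chi\bigl(E_1^{F}(D)\bigr) = \chi\bigl(E_1^{H}(D)\bigr)$.

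Next, the $E_1$-differentials $(d_1^{F})^{*}$ and $(d_1^{H})^{*}$ raise the cube grading by one and are homogeneous for the remaining, polynomial, gradings -- standard on the HOMFLY-PT side, and on the Floer side a consequence of the normalization of the gradings on $C_{F}(D)$ -- so taking homology along them preserves the graded Euler characteristic. Hence $\chi\bigl(E_2^{F}(D)\bigr) = \chi\bigl(E_1^{F}(D)\bigr) = \chi\bigl(E_1^{H}(D)\bigr) = \chi\bigl(E_2^{H}(D)\bigr)$. Since $E_2^{H}(D)$ is the HOMFLY-PT homology $H_{H}(K)$, whose graded Euler characteristic is the HOMFLY-PT polynomial, we conclude that $\chi\bigl(E_2^{F}(D)\bigr)$ is the HOMFLY-PT polynomial.

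The mathematical content is entirely in the Theorem; what remains is grading bookkeeping, and this is the one place where care is needed. One must check that the bigrading in the statement of the Theorem is exactly the pair of polynomial gradings appearing in the HOMFLY-PT specialization, that the vertexwise shifts prescribed by $D$ agree literally in the Floer and HOMFLY-PT cubes of resolutions, and that $(d_1^{F})^{*}$ is bihomogeneous for these gradings, so that the two-variable Euler characteristic is preserved exactly in the step from $E_1^{F}(D)$ to $E_2^{F}(D)$ (it need not, and does not, survive further, since the spectral sequence converges to $\mathit{HFK}^{-}(K)$). All of these points follow once the conventions of the earlier sections are unpacked, so no new ideas are required.
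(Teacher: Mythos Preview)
Your argument is correct in outline, but it is genuinely different from the route the paper takes. The paper does not compare $E_{1}^{F}(D)$ with $E_{1}^{H}(D)$ directly. Instead it invokes an earlier result (\cite{Me}) showing that a \emph{filtered} variant $E_{2}^{f}(D)$, built from the part of the differential that preserves the basepoint filtration, already has graded Euler characteristic equal to the HOMFLY-PT polynomial; then it uses the main Theorem in the form ``the basepoint spectral sequence collapses'' to produce a further spectral sequence from $E_{2}^{f}(D)$ to the honest $E_{2}^{F}(D)$, whose differentials have triple grading $\{0,0,2\}$ and hence leave the Euler characteristic unchanged. So the paper stays entirely on the Floer side and compares the filtered complex to the unfiltered one, whereas you cross over to the HOMFLY-PT cube and compare there.

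Your approach has the advantage of being self-contained: it does not need the external input from \cite{Me}. The price is exactly the bookkeeping you flag at the end, and here you should be a bit more careful than you are. The precise form of the Theorem proved in the body of the paper is $H_{F}(S)\cong H_{H}(S)\!<\!1\!>$, i.e.\ the HOMFLY-PT bigrading is twisted by $(gr_{q},gr_{h})\mapsto(gr_{q}+gr_{h},gr_{h})$ before the vector spaces match. This means the Euler characteristic you obtain is $P_{H}(aq,q,D)$ rather than $P_{H}(a,q,D)$, consistent with the paper's final corollary; you should state this explicitly rather than leave it inside ``grading bookkeeping.'' You also need to verify that the overall grading shifts assigned to each vertex of the cube agree on the two sides after this twist, which is not automatic from the vertexwise isomorphism alone. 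These checks are routine but should be written down; once they are, your argument goes through.
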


This corollary provides evidence for the conjecture that $E^{F}_{2}(D)$ is in fact isomorphic to HOMFLY-PT homology.

The isomorphisms $ H_{H}(S) \cong H_{F}(S)$ and $H_{*}(C_{F(n)}(S)) \cong H_{n+1}(S) $ are not canonical - they rely on a categorification of a relationship among knot polynomials known as the composition product. Some work in this area has been done by Wagner (\hspace{1sp}\cite{Wagner}), but we need the results in greater generality to prove the above theorems as bigraded vector spaces. These results are discussed in Section \ref{CompChapter}.

\section{The Khovanov-Rozansky Homology of Singular Links} \label{KRChapter}

\subsection{Singular Resolutions and the Ground Ring}

A complete resolution $S$ of a knot $K$ in braid position can be viewed as an oriented planar graph with the following properties:

1. All vertices are either 2-valent or 4-valent.

2. The number of incoming edges is equal to the number of outgoing edges at each vertex.

3. If $Z$ is an oriented cycle in $S$, then the unique disc $D \subset \R^{2}$ with boundary $Z$ intersects the center of the braid.

\begin{figure}[h!]
 \centering
   \begin{overpic}[width=.4\textwidth]{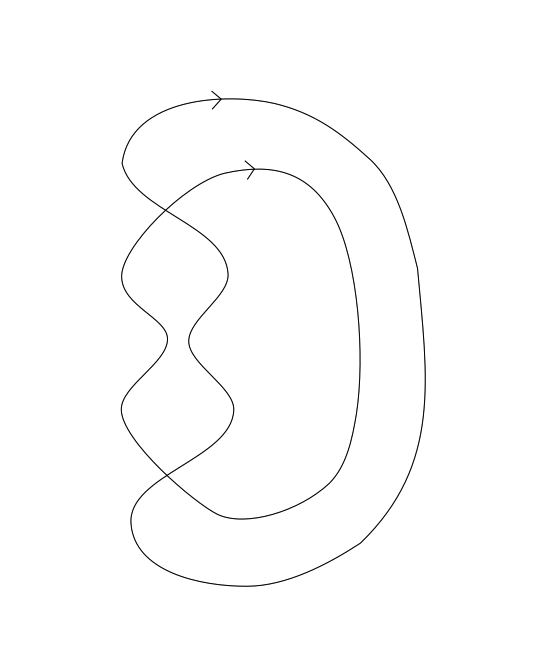}
   \put(24.4,66.2){$\bullet$}
   \put(14,75){$e_{1}$}
   \put(32,75){$e_{2}$}
   \put(13.5,57){$e_{3}$}
   \put(36,57){$e_{4}$}
   \put(14,35){$e_{5}$}
   \put(37,35){$e_{6}$}
   \put(24.6,25.1){$\bullet$}
   \put(24.6,46){$\bullet$}   
   \put(28,46){$\bullet$}   
   \end{overpic}
\caption{An example of a singular braid diagram}
\end{figure}

Let $e_{1},...,e_{k}$ denote the edges of $S$. To each edge $e_{i}$, we assign an indeterminant $U_{i}$. All three homology theories will be defined over the ground ring $R=\Q[U_{1},...,U_{k}]$.

\subsection{HOMFLY-PT Homology and $sl_{n}$ Homology} \label{HOMFLYsection}

This section will give a brief description of the HOMFLY-PT and $sl_{n}$ as defined in \cite{KR} and \cite{KR2}. We will use the grading conventions from \cite{Rasmussen}, though we will leave out the overall grading shifts coming from the braid number. The reader can refer to these resources for further background. The HOMFLY-PT and $sl_{n}$ complexes have the same generators, with the $sl_{n}$ complex having strictly more differentials than the HOMFLY-PT complex. For this reason, we will start by defining the HOMFLY-PT complex, then we will describe the additional differentials to make the $sl_{n}$ complex.

The HOMFLY-PT complex for links comes equipped with a triple-grading, and the $sl_{n}$ complex with a bigrading. One of the gradings in both theories, however, comes from the height in the cube of resolutions, so it will be fixed for a single resolution. The HOMFLY-PT complex will therefore come with a bigrading, and the $sl_{n}$ complex with a single grading. For the HOMFLY-PT complex, these gradings are called the quantum grading, denoted $gr_{q}$, and the horizontal grading, denoted $gr_{h}$. Multiplication by the $U_{i}$ increases the quantum grading by $2$ and preserves the horizontal grading.

Let $V_{2}(S)$ denote the 2-valent vertices in $S$ and $V_{4}(S)$ the 4-valent vertices of $S$. For vertices $v$ in $V_{2}(S)$, there is a unique outgoing edge $e_{i}$ and a unique incoming edge $e_{j}$. Define $L(v)$ to be the linear term $U_{i}-U_{j}$. Similarly, for vertices $v$ in $V_{4}(S)$ there are two outgoing edges $e_{i}$ and $e_{j}$ and two incoming edges $e_{k}$ and $e_{l}$. We define $L(v)$ to be the linear term $U_{i}+U_{j}-U_{k}-U_{l}$ and $Q(v)$ to be the quadratic term $U_{i}U_{j}-U_{k}U_{l}$.

The HOMFLY-PT complex is a tensor product of complexes $C_{H}(v)$ for each vertex $v$. For $v$ in $V_{2}(S)$, $C_{H}(v)$ is defined as 

\begin{figure}[!h]
\centering
\begin{tikzpicture}
  \matrix (m) [matrix of math nodes,row sep=5em,column sep=6em,minimum width=2em] {
     R\{0, -2\} & R\{0, 0\} \\};
  \path[-stealth]
    (m-1-1) edge node [above] {$L(v)$} (m-1-2);
\end{tikzpicture}
\end{figure}
 
\noindent
where $R\{i,j\}$ refers to the thing $R$ shifted by $i$ in $gr_{q}$ and by $j$ in $gr_{h}$. For $v$ in $V_{4}(S)$, $C_{H}(v)$ is defined as 

\begin{figure}[!h]
\centering
\begin{tikzpicture}
  \matrix (m) [matrix of math nodes,row sep=5em,column sep=6em,minimum width=2em] {
     R\{1,-4\} & R\{1,-2\} \\
     R\{-1,-2\} & R\{-1,0\} \\};
  \path[-stealth]
    (m-1-1) edge node [left] {$Q(v)$} (m-2-1)
    (m-1-1) edge node [above] {$L(v)$} (m-1-2)
    (m-1-2) edge node [right] {$Q(v)$} (m-2-2)
    (m-2-1) edge node [above] {$L(v)$} (m-2-2);
\end{tikzpicture}
\end{figure}

\noindent
Note that the differential is homogeneous of degree $\{2, 2\}$. The HOMFLY-PT complex for the singular diagram $S$ is given by

\[ C_{H}(S) = \bigotimes_{v \in S} C_{H}(v)    \]

\noindent
where the tensor product is taken over $R$, and the HOMFLY-PT homology $H_{H}(S)$ is the homology of $C_{H}(S)$.

We will now define the additional differentials which give $sl_{n}$ homology. For a vertex $v$ in $S$ with outgoing edges $E_{out}$ and incoming edges $E_{in}$, let the potential $w_{n}$ be given by 

\[ w_{n}(v) = \sum_{e_{i} \in E_{out}} U_{i}^{n+1} -  \sum_{e_{j} \in E_{in}} U_{j}^{n+1}   \]

\noindent
For $v$ in $V_{2}(S)$, let $u_{1}(v)$ be the unique element in $R$ such that $u_{1}(v)L(v)=w_{n}(v)$. For $v$ in $V_{4}(S)$, we can choose $u_{1}(v)$ and $u_{2}(v)$ such that $u_{1}(v)L(v)+u_{2}(v)Q(v)=w_{n}(v)$. Unlike the 2-valent case, the choice is not unique, but the reader can refer to (\hspace{1sp}\cite{KR}, p. 6) for the precise choice. (It is not relevant for our discussion.)

For each vertex $v$, we will add new differentials to $C_{H}(v)$ to make a new complex $C_{n}(V)$. For $v$ in $V_{2}(S)$, $C_{n}(v)$ is given by 

\begin{figure}[!h]
\centering
\begin{tikzpicture}
  \matrix (m) [matrix of math nodes,row sep=5em,column sep=8em,minimum width=2em] {
     R\{0, -2\} & R\{0, 0\} \\};
  \path[-stealth]
    (m-1-1) edge [bend left=15] node [above] {$L(v)$} (m-1-2)
    (m-1-2) edge [bend left=15] node [below] {$u_{1}(v)$} (m-1-1);
\end{tikzpicture}
\end{figure}

\noindent
and for $v$ in $V_{4}(S)$, $C_{n}(v)$ is given by 

\begin{figure}[!h]

\centering
\begin{tikzpicture}
  \matrix (m) [matrix of math nodes,row sep=7em,column sep=8em,minimum width=2em] {
     R\{1,-4\} & R\{1,-2\} \\
     R\{-1,-2\} & R\{-1,0\} \\};
  \path[-stealth]
    (m-1-1) edge [bend right = 15] node [left] {$Q(v)$} (m-2-1)
            edge [bend left = 15] node [above] {$L(v)$} (m-1-2)
    (m-2-1) edge [bend left = 15] node [above] {$L(v)$} (m-2-2)
            edge [bend right = 15] node [right] {$u_{2}(v)$} (m-1-1)    
    (m-1-2) edge [bend right=15] node [left] {$Q(v)$} (m-2-2)
            edge [bend left=15] node [below] {$u_{1}(v)$} (m-1-1)
    (m-2-2) edge [bend left=15] node [below]  {$u_{1}(v)$} (m-2-1)
            edge [bend right=15] node [right] {$u_{2}(v)$} (m-1-2);

\end{tikzpicture}
\end{figure}

Observe that for both types of vertices, the differential on $C_{n}(v)$ satisfies $d^{2}=w_{n}(v)I$. Such a complex is called a matrix factorization with potential $w_{n}$. Since $d^{2}$ is non-zero, its homology is not well-defined. However, we are interested in the tensor product of $C_{n}(v)$ over all vertices $v$ in $S$. Define the $sl_{n}$ complex $C_{n}(S)$ by

\[ C_{n}(S) = \bigotimes_{v \in S} C_{n}(v)    \]

\noindent
where again the tensor product is taken over $R$.

As mentioned above, the HOMFLY-PT differentials are homogeneous of degree $\{2, 2\}$. These differentials are denoted by $d_{+}$. The new differentials, those with coefficients $u_{1}(v)$ and $u_{2}(v)$, are homogeneous of degree $\{2n, -2\}$. These are denoted $d_{-}$. The total differential $d_{tot} = d_{+}+d_{-}$ is not homogeneous in this bigrading. However, if we look at the grading $gr_{n} = gr_{q}+(n-1)gr_{h}/2$, then $d_{tot}$ is homogeneous of degree $n+1$. 

Additionally, $d_{tot}^{2}=0$. This can be seen from the fact that the potential is additive under tensor product, so $d_{tot}^{2}= \sum_{v \in S} w_{n}(v)$. The sum must be zero because each edge $e_{i}$ is an outgoing edge for one vertex, which will contribute $U_{i}^{n+1}$, and an incoming edge for another vertex, which will contribute $-U_{i}^{n+1}$.

We have shown that $C_{n}(S)$ is a well-defined chain complex which is homogeneous with respect to the grading $gr_{n}$. We define the $sl_{n}$ homology $H_{n}(S)$ to be the homology of this complex. 

\begin{rem}

The definitions given here correspond to the unreduced theories in \cite{Rasmussen} as opposed to the middle or reduced homologies. This choice will make our arguments involving the composition product easier, although they would still work for the reduced versions using the destabilized composition product, which is described in Section \ref{dest}. 

\end{rem}

\subsection{Rasmussen's Spectral Sequences} \label{sect2.3}

Rasmussen showed in \cite{Rasmussen} that there are a family of spectral sequences $E_{k}(n)$ which start at HOMFLY-PT homology and converge to $sl_{n}$ homology. These spectral sequences are somewhat difficult to prove for the case of knots and links, but they are much simpler for fully singular diagrams.

The differential $d_{tot}$ on $C_{n}(S)$ admits a filtration induced by the grading $(gr_{q}-gr_{h})/(2n+2)$. With respect to this grading, $d_{+}$ is homogeneous of degree 0, and $d_{-}$ is homogeneous of degree 1. Note that this filtration is not bounded above, so \emph{a priori} it does not induce a well-defined spectral sequence to the total homology. However, if we look at the induced differentials which change this grading by $k$, we see that they also decrease the horizontal grading by $2-4k$. Since the complex is bounded in horizontal grading, this bounds the length of induced differentials, so the filtration does induce a spectral sequence to the total homology.

The $E_{1}$ page of this spectral sequence is $H(C_{n}(S), d_{+})$, which is exactly the definition of HOMFLY-PT homology. The $E_{\infty}$ page is the homology with respect to $d_{tot}$, or $sl_{n}$ homology.

It turns out that this spectral sequence collapses at the $E_{2}$ page, given by

\[H_{*}(H_{*}(C_{n}(S), d_{+}),d_{-}^{*})\]

\noindent
We will denote this page by $H^{\pm}(C_{n}(S))$. Note that $H^{\pm}(C_{n}(S))$ is bigraded, as both $d_{+}$ and $d_{-}$ are homogeneous. The fact that all higher differentials are trivial follows from the following lemma.

\begin{lem}[\hspace{1sp}\cite{Rasmussen}]

The homology $H^{\pm}(C_{n}(S))$ lies in a single horizontal grading, namely $gr_{h}=2r(S)$, where $r(S)$ is the rotation number of $S$. Since $S$ is a singular braid oriented clockwise, $r(S)$ is the negative of the number of strands in $S$.

\end{lem}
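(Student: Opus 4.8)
The statement is due to Rasmussen, so what follows is a reconstruction of a proof compatible with the setup above. The plan is to exploit the explicit Koszul‑type description of $C_n(S)$. Since each edge contributes one variable and $k=|V_2(S)|+2|V_4(S)|$ (count edge–endpoints), the HOMFLY-PT complex $(C_n(S),d_+)=\bigotimes_v C_H(v)$ is exactly the Koszul complex $\mathrm{Kos}_R(\vec g)$ on the $k$-tuple $\vec g$ consisting of $L(v)$ for every $v$ together with $Q(v)$ for every $v\in V_4(S)$; writing $C_n(S)=R\otimes_\Q\Lambda^{\bullet}W$ with $\dim_\Q W=k$, the differential $d_+$ is the contraction (Koszul) differential for $\vec g$, and $d_-$ is the exterior‑multiplication differential for the $k$-tuple $\vec u=(u_1(v),u_2(v))$, where the defining identity $u_1(v)L(v)+u_2(v)Q(v)=w_n(v)$ and $\sum_v w_n(v)=0$ give $\sum_j g_j u_j=0$. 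Under this description the horizontal grading $gr_h$ equals, up to an overall shift, $-2$ times the exterior degree, so the claim is that $H^{\pm}(C_n(S))$ sits in exterior degree $b$, where $b$ is the number of strands (and $r(S)=-b$, since a clockwise braid closure on $b$ strands has rotation number $-b$).

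The first step is to locate $H_H(S)=H_{*}(\mathrm{Kos}_R(\vec g))$. I would show that the ideal $(\vec g)$ has height $k-b$ — equivalently that the edge algebra $A:=R/(\vec g)$ has Krull dimension $b$ — and that $A$ is Cohen--Macaulay. Geometrically $\mathrm{Spec}(A)$ is the union, over all resolutions of the $4$-valent vertices into a ``pass'' or a ``swap'', of the corresponding coordinate subspace of $\mathbb{A}^k$ (this is precisely the solution set of $U_i+U_j=U_k+U_l$, $U_iU_j=U_kU_l$), whose top-dimensional component is the oriented (all-pass) resolution, realized as $b$ disjoint circles, hence of dimension $b$; for a braid the relevant simplicial complex should be shellable, yielding the Cohen--Macaulay property. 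By the depth-sensitivity of the Koszul complex this forces $H_i(\mathrm{Kos}_R(\vec g))=0$ for $i>b$, so $H_H(S)$ is supported in $gr_h\in\{2r(S),2r(S)+2,\dots,0\}$, which is half of the statement.

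The second, and main, step is to show that $(H_{*}(\mathrm{Kos}_R(\vec g)),d_-^{*})$ is exact except in homological degree $b$. Because the total potential vanishes, $d_+$ and $d_-$ anticommute, so $(R\otimes\Lambda^{\bullet}W,d_+,d_-)$ is a bounded double complex whose two spectral sequences both converge to $H_n(S)$; the one filtered by $d_+$ first has $E_1=H_H(S)$ and $d_1=d_-^{*}$, while the one filtered by $d_-$ first has first page $H^{*}(R\otimes\Lambda^{\bullet}W,d_-)$, concentrated in exterior degrees $[k-b,k]$ once one checks that $(\vec u)$ also has height $k-b$ with $R/(\vec u)$ Cohen--Macaulay. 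From the second sequence I would pin down $H_n(S)$ as $\Q[x_1,\dots,x_b]/(v_1,\dots,v_b)$, for the polynomial subring $\Q[x_1,\dots,x_b]\hookrightarrow A$ coming from the all-pass resolution and the regular sequence $(v_1,\dots,v_b)$ induced by $\vec u$ (up to units, the $n$-th powers of the $x_i$); then, back in the first sequence, $(H_{*}(\mathrm{Kos}_R(\vec g)),d_-^{*})$ must be a length-$b$ resolution of this module over $\Q[x_1,\dots,x_b]$, so by Auslander--Buchsbaum its homology lives only in the top degree. The hard part is exactly this comparison: one must show that the induced map $d_-^{*}$ on the \emph{a priori} non-free higher Koszul homology modules $H_i(\mathrm{Kos}_R(\vec g))$, $0<i<b$, is as non-degenerate as possible — equivalently that $H_H(S)$ is free over $\Q[x_1,\dots,x_b]$ with $d_-^{*}$ the Koszul differential of a regular sequence. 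Establishing the Cohen--Macaulayness in Step 1 and this freeness in Step 2 are where the combinatorics of the braid graph really enters; everything after that is formal homological algebra.
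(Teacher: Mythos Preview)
The paper does not prove this lemma; it is quoted from \cite{Rasmussen}. Rasmussen's argument there is quite different from yours: he shows that the matrix-factorization homology of singular planar graphs satisfies the MOY (Murakami--Ohtsuki--Yamada) relations, and uses those local identities to reduce any singular braid $S$ step by step to a disjoint union of $b$ circles, for which the concentration in a single horizontal grading is immediate; each move shifts $gr_h$ in a controlled way, so the concentration survives the induction. No global commutative-algebra input is needed beyond verifying each local move.

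Your Koszul/depth-sensitivity outline correctly recovers the range $gr_h\in\{2r(S),\dots,0\}$ in Step~1 (the dimension count $\dim R/(\vec g)=b$ is right and already gives $\mathrm{grade}(\vec g)=k-b$ since $R$ itself is Cohen--Macaulay; CM-ness of $A$ is not needed for this). But Step~2 has real gaps at exactly the points you flag as ``hard''. The height claim for $(\vec u)$ is unsupported --- the $u_i(v)$ are high-degree polynomials whose common vanishing locus has no evident description. More seriously, the final Auslander--Buchsbaum step is circular: AB controls the length of a \emph{free} resolution, so to invoke it you would first need each $H_i(\mathrm{Kos}_R(\vec g))$ to be free over $\Q[x_1,\dots,x_b]$, which you do not establish; and even granting that, knowing that $H_n(S)$ admits \emph{some} length-$b$ free resolution does not force the particular complex $(H_H(S),d_-^{*})$ to be acyclic outside one degree. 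The deferred ``combinatorics of the braid graph'' is thus the entire content of the lemma, and Rasmussen's MOY-move induction supplies it directly without these detours.
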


Since none of the higher differentials preserve the horizontal grading, they must all be trivial, causing the spectral sequence to collapse. 

\begin{cor}\label{gradinglemma}

Viewing $H^{\pm}(C_{n}(S))$ as singly graded with grading $gr_{n}$, there is a graded isomorphism $H^{\pm}(C_{n}(S)) \cong H_{n}(S)$.

\end{cor}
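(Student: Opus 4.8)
The plan is to deduce the corollary directly from the horizontal-grading constraint of the preceding lemma, since the only remaining content is (i) that the spectral sequence of the filtration by $(gr_q-gr_h)/(2n+2)$ collapses at the $E_2$ page and (ii) that the resulting identification respects the single grading $gr_n$. First I would recall the setup: as recorded above, this spectral sequence converges (boundedness of $gr_h$ bounds the lengths of the induced differentials), its $E_1$ page is $H_*(C_n(S),d_+) = H_H(S)$, and its $E_2$ page is $H_*(H_*(C_n(S),d_+),d_-^*) = H^\pm(C_n(S))$. I would then record the effect of the higher differentials on the horizontal grading: $d_{tot}$ is homogeneous of degree $n+1$ with respect to $gr_n$, while $d_k$ raises the filtration degree by $k$, and $gr_h$ is an affine function of $gr_n$ and the filtration degree; combining these, $d_k$ changes $gr_h$ by $2-4k$.

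Next, by the preceding lemma $E_2 = H^\pm(C_n(S))$ is supported in the single horizontal grading $gr_h = 2r(S)$. For every $k \ge 2$ we have $2-4k \ne 0$, so $d_k$ would have to send a class of $E_2$ into a horizontal grading where the group is zero; hence $d_k = 0$ for all $k \ge 2$ and the spectral sequence degenerates at $E_2$. This already gives $H^\pm(C_n(S)) = E_2 = E_\infty \cong H_n(S)$ as ungraded vector spaces.

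To upgrade this to a $gr_n$-graded isomorphism I would carry $gr_n$ along throughout. Since $d_+$ and $d_-$, and hence $d_{tot}$, are all homogeneous of degree $n+1$ in $gr_n$, this grading descends to every page of the spectral sequence, and on $E_\infty$ it agrees with the $gr_n$-grading on the associated graded of $H_n(S)$. Because $E_2$ lies in a single horizontal grading, within each fixed $gr_n$-degree the pair $(gr_h, gr_n)$ determines the filtration degree, so the induced filtration on $H_n(S)$ has exactly one nonzero graded piece in each $gr_n$-degree; there is then no extension ambiguity, and $H^\pm(C_n(S)) \cong H_n(S)$ as $gr_n$-graded vector spaces, which is the claim.

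The argument is short, and its only delicate point is the last paragraph's bookkeeping — checking that collapse at $E_2$ automatically promotes from an ungraded to a $gr_n$-graded statement with no extension problem. Everything else is an immediate consequence of the horizontal-grading lemma, which is where the real work lies.
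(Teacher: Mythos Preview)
Your argument is correct and follows the same line as the paper: the preceding lemma forces $E_2=H^{\pm}(C_n(S))$ to sit in a single horizontal grading, and since the induced $d_k$ shifts $gr_h$ by $2-4k\neq 0$ for $k\ge 2$, the spectral sequence collapses at $E_2$ and yields $H^{\pm}(C_n(S))\cong H_n(S)$. Your extra paragraph carefully disposing of the extension problem in the $gr_n$-grading is more explicit than the paper (which simply asserts the graded corollary), but it is the same argument, not a different one.
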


\begin{rem}

The reader familiar with \cite{Rasmussen} may note that our homology lies in $gr_{h}=-2r(S)$, while Rasmussen's lies in $gr_{h}=1-r(S)$. This difference comes from the fact that our homology is unreduced, which changes the grading by $2$, and because we are leaving out the overall grading shift of $r(S)-1$.

\end{rem}

\section{A Recursion Formula for the Khovanov-Rozansky Homology of Singular Links} \label{CompChapter}

\subsection{The Composition Product} \label{clearcycle}

\subsubsection{Jaeger's Formula}

The first composition product formula was defined by Jaeger in \cite{Jaeger}. In order to talk about the composition product, we must first define labelings of a diagram. Let $K$ be a knot with corresponding diagram $D$. Viewing $D$ as an oriented 4-valent graph, we say that a subset $S$ of the edges of $D$ is a homological cycle if at each vertex in $D$ the number of incoming edges in $S$ is equal the the number of outgoing edges in $S$. A \emph{labeling} $f$ of the diagram $D$ is a function from the set of edges in $D$ to the set $\{1,2\}$ such that $f^{-1}(1)$ is a homological cycle. (Note that  $f^{-1}(1)$ is a homological cycle iff $f^{-1}(2)$ is a homological cycle.)

We will place a restriction on which homological cycles are allowed. A homological cycle is said to make a turn at a crossing $c$ if it has one incoming edge at $c$ and one outgoing edge at $c$, and those edges are not diagonal from one another. Let $T(f)$ denote the number of turns of the labeling $f$. A labeling $f$ is \emph{admissible} if $f^{-1}(1)$ doesn't make any left turns at positive crossings or right turns at negative crossings.

\begin{figure}[h!]
\centering
\begin{subfigure}{.5\textwidth}
  \centering
   \begin{overpic}[width=.5\textwidth]{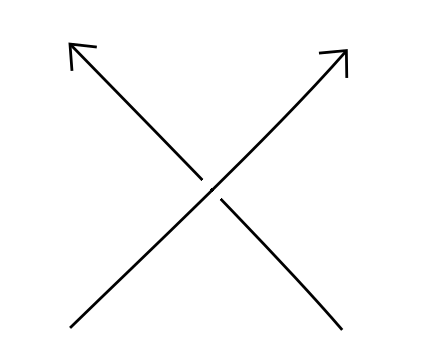} 
   \put(8,9){$1$}
   \put(8,70){$1$}
   \put(82,9){$2$}
   \put(82,70){$2$}
   \end{overpic}
\end{subfigure}%
\begin{subfigure}{.5\textwidth}
  \centering
  \begin{overpic}[width=.5\textwidth]{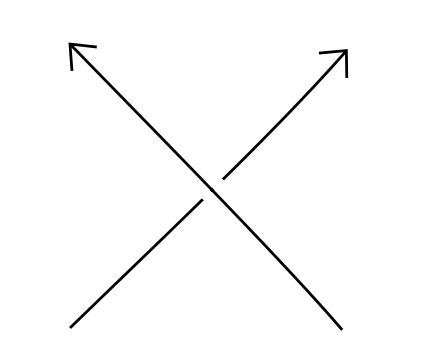} 
   \put(8,9){$2$}
   \put(8,70){$2$}
   \put(82,9){$1$}
   \put(82,70){$1$}
  \end{overpic}
\end{subfigure}
\caption{Non-Admissible Labelings}
\end{figure}

Since the homological cycle $f^{-1}(1)$ uniquely determines the labeling $f$, we will say that a homological cycle $Z$ is admissible if the unique labeling $f$ with $f^{-1}(1)=Z$ is admissible. The two cycles $f^{-1}(1)$ and $f^{-1}(2)$ can both be viewed as diagrams of links if we retain the crossing information whenever one of them contains all four edges at a crossing, and forget it otherwise. We will refer to these diagrams as $D_{f,1}$ and $D_{f,2}$, respectively. 

Finally, we will need some combinatorial data about the diagrams $D_{f,1}$ to define the composition product. Given a diagram $D$, consider the diagram obtained by changing each crossing in $D$ to the oriented smoothing. The resulting diagram must be a collection of oriented circles - these are known as the Seifert circles of $D$. We define the \emph{rotation number} $r(D)$ to be sum of the signs of the Seifert circles, with a circle contributing a $+1$ if it is oriented counterclockwise and $-1$ if it is oriented clockwise. Note that when $D$ is a braid, $r(D)$ is simply the negative of the number of strands in the braid, i.e. $r(D)=-b(D)$.

With the HOMFLY-PT polynomial $P_{H}$ as defined in the introduction, define

\[P'_{H}(a,q,D)=(\frac{a-a^{-1}}{q-q^{-1}})(a^{w(D)})P_{H}(a,q,D)\]

\noindent
Note that $P'_{H}$ is invariant under Reidemeister II and III moves, but performing a Reidemeister I move changes the writhe, so one picks up a factor of $a$ or $a^{-1}$ depending on the sign of the crossing. With this normalization, $P'_{H}(unknot)=\frac{a-a^{-1}}{q-q^{-1}}$, and $P'_{H}(\emptyset)=1$, where $\emptyset$ denotes the empty diagram. Jaeger's composition product can be stated as follows:

\begin{equation}
\label{Jaeger}
\begin{split}
 \hspace{-5mm}\mathop{\sum_{f \text{ admissible}}} (q-q^{-1})^{T(f)}a_{1}^{r(D_{f,2})}a_{2}^{-r(D_{f,1})} & P'_{H}(a_{1},q,D_{f,1})P'_{H}(a_{2},q,D_{f,2}) \\
 & = P'_{H}(a_{1}a_{2},q,D)
\end{split} 
\end{equation}

The proof of this formula is combinatorial in nature - one can show that it behaves properly under Reidemeister moves and that it satisfies the necessary skein relation via local computations. In fact, Jaeger showed in \cite{Jaeger} that this formula is invariant under all Reidemeister moves, so the formula holds for arbitrary diagrams $D$ instead of just braid diagrams.

To complete the proof, we just have to check that it works on the unknot, which is calculated below.

\[ a_{2}^{-1} \frac{a_{1}-a_{1}^{-1}}{q-q^{-1}} + a_{1} \frac{a_{2}-a_{2}^{-1}}{q-q^{-1}} = \frac{a_{1}a_{2}-a_{1}^{-1}a_{2}^{-1}}{q-q^{-1}}=P_{H}'(a_{1}a_{2}, q, unknot) \]

\subsubsection{The Destabilized Composition Product}
\label{dest}

We develop an adaptation of Jaeger's composition product for a decorated braid diagram $D$, i.e. a diagram in braid position that has one marked edge $e_{0}$, in the special case where $a_{1}=q$. (Again, the condition that our diagram is a braid is not necessary for this definition, but it will be necessary on the level of homology.) The diagram $D$ now has a special Seifert circle, the one containing the marked edge $e_{0}$ - we will call this circle $S_{0}$. We will define the sign of a Seifert circle $S$ as follows:

\begin{equation}
\label{signs}
 sign(S) = \begin{cases} 
      +1 & \textrm{ if $S$ is oriented CCW and $S$ does not contain $S_{0}$} \\
      -1 & \textrm{ if $S$ is oriented CCW and $S$ contains $S_{0}$} \\
      -1 & \textrm{ if $S$ is oriented CW and $S$ does not contain $S_{0}$} \\
      +1 & \textrm{ if $S$ is oriented CW and $S$ contains $S_{0}$} \\
      0 & \textrm{ if $S=S_{0}$} \\
      
   \end{cases} 
\end{equation}

An alternative way to view these signs is to imagine our diagram is in $S^{2}$ instead of the plane, so that each Seifert circle bounds two discs. To determine the sign of the Seifert circle, we view it as the boundary of the disc that does not contain the edge $e_{0}$. Then, as before, we say that it is $+1$ if it is oriented CCW and negative if it is oriented CW. The special circle containing $e_{0}$ has no contribution.

With these sign conventions, let $r(D)$ denote the sum of the signs of the Seifert circles. We can define our reduced version of the composition product by 

\begin{equation}
\label{compred}
\begin{split}
  \mathop{\sum_{f \text{ admissible}}}_{f(e_{0})=2} \Big[ (q-q^{-1})^{T(f)} & q^{r(D_{f,2})-s(D_{f,2})} a^{-r(D_{f,1})-s(D_{f,1})} \\
& \cdot  P_{H}(q,q,D_{f,1})P_{H}(a,q,D_{f,2}) \Big] = P_{H}(qa,q,D)
\end{split}  
\end{equation}

\noindent
where the signs of the Seifert circles in both $D_{f,1}$ and $D_{f,2}$ are given by (\ref{signs}) relative to the marked edge $e_{0}$, even though $e_{0}$ always belongs to $D_{f,2}$. The quantities $s(D_{f,i})$, defined to be the sum of the signs of the crossings in $D$ with at least one adjacent edge labeled $i$, stems from the fact that Jaeger's composition product came with factors of $a^{w(D)}$. (Note that $s(D_{f,1})=w(D)-w(D_{f,2})$ and  $s(D_{f,2})=w(D)-w(D_{f,1})$.)

The proof of this equality is identical to the proof for Jaeger's, since everything is the same locally. The only differences (aside from the notational difference of removing the shifts by $w(D)$) are that our labelings require that $f(e_{0})=2$ and we don't have the factor of $\frac{a-a^{-1}}{q-q^{-1}}$. Jaeger's calculations show that our construction satisfies the correct skein relation, and that it is invariant under Reidemeister moves that take place away from the marked edge $e_{0}$. By the equivalence of knots and (1,1) tangles, these are the only Reidemeister moves we need to show invariance. Thus, to complete the proof, we just need to check that the formula holds on the base case of the unknot. 

\begin{rem}

It is important to note that the equivalence of knots and (1,1) tangles is not true when restricted to braids - one must sometimes isotope through non-braid diagrams in order to connect two equivalent tangles. Fortunately, Jaeger showed in \cite{Jaeger} that Equation \ref{Jaeger} is invariant under all Reidemeister moves regardless of orientation, giving an invariant for arbitrary diagrams.

\end{rem}

There is only one labeling that contributes for the unknot. Since there is only one edge, it must be the marked edge $e_{0}$, and $f(e_{0})=2$. For this labeling $f$, $r(D_{f,1})=r(D_{f,2})=s(D_{f,1})=s(D_{f,2})=0$, $P_{1}(\phi)=\frac{q-q^{-1}}{q-q^{-1}}=1$, and $P_{H}(unknot)=1$, so (\ref{compred}) becomes $1 \cdot 1=1$. This establishes the base case, which proves the formula.

\begin{ex} \textbf{The Right Handed Trefoil}

\noindent

\begin{figure}[h!]

 \centering
   \begin{overpic}[width=.25\textwidth]{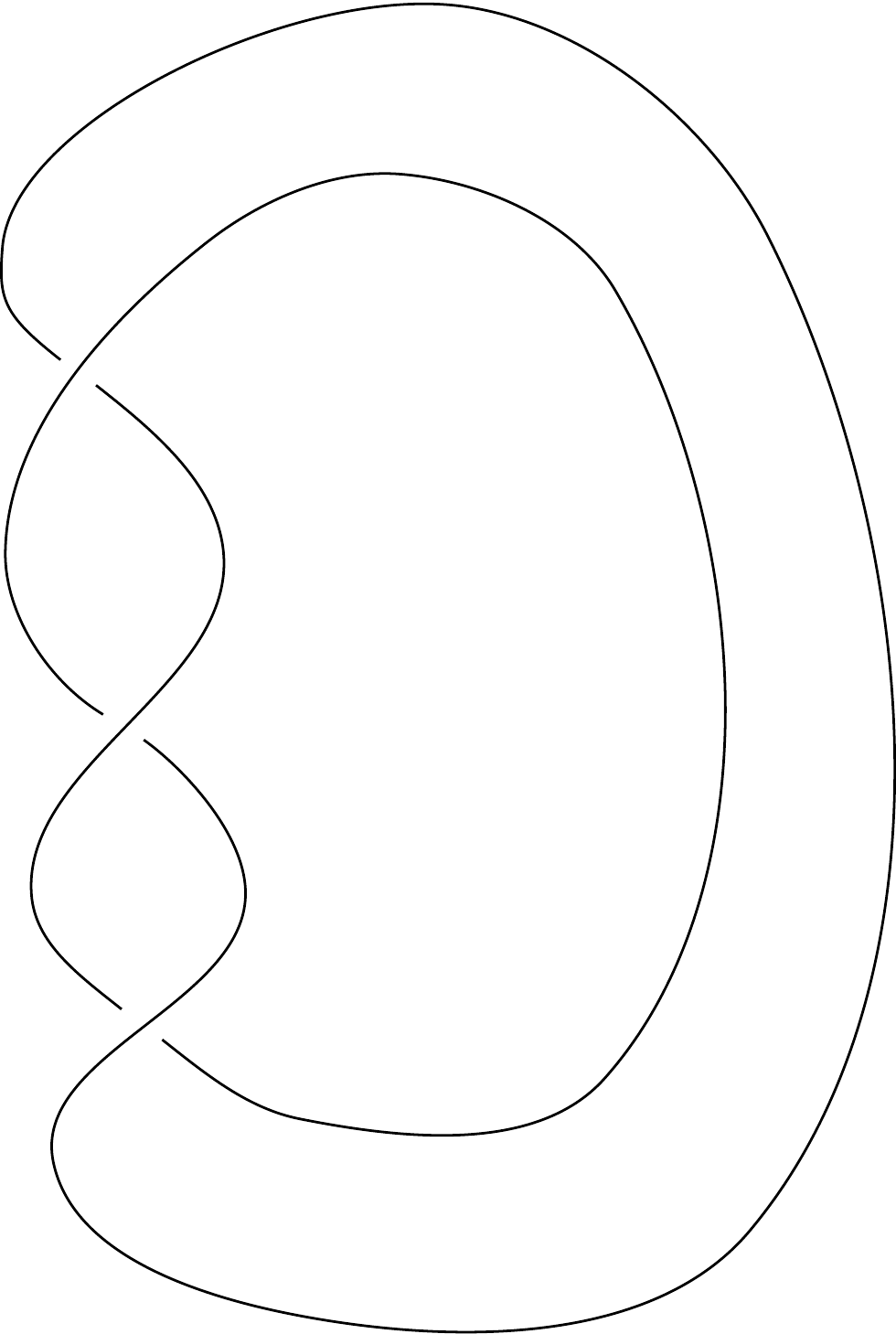}
   \put(-8,83){$e_{0}$}
   \put(10,83){$e_{1}$}
   \put(-7,58){$e_{2}$}
   \put(19,58){$e_{3}$}
   \put(-6,32){$e_{4}$}
   \put(20,32){$e_{5}$}
   \put(-1.5,79){$\circ$}
   \end{overpic}
   
\caption{Braid Diagram for the Right Handed Trefoil}

\end{figure}

This diagram $D$ has four local homological cycles which we will describe in terms of the edges in $f^{-1}(1)$, since this set uniquely characterizes $f$. These four sets are $\emptyset$, $e_{1}e_{2}e_{5}$, $e_{1}e_{3}e_{4}$, and $e_{1}e_{3}e_{5}$. Their contributions are listed in the table below. The sum of these contributions is $a^{-2}+a^{-2}q^{-4}-a^{-4}q^{-4}$, which is equal to $P_{H}(qa,q,D)$. We can see this equality by substituting $a \mapsto aq$ in the contribution from $\emptyset$.

\begin{center}
  \begin{tabular}{ l | c }

    Cycle & Contribution  \\ \hline
    $\emptyset$ & $a^{-2}q^{2}+a^{-2}q^{-2}-a^{-4}$ \\ \hline
    $e_{1}e_{2}e_{5}$ & $(q-q^{-1})q^{-3}a^{-2}$  \\ \hline
    $e_{1}e_{3}e_{4}$ & $(q-q^{-1})q^{-3}a^{-2}$  \\ \hline
    $e_{1}e_{3}e_{5}$ & $(q-q^{-1})^{3}q^{-3}a^{-2}$  \\ \hline
    \bf{Total} & $a^{-2}+a^{-2}q^{-4}-a^{-4}q^{-4}$ \\ \hline

  \end{tabular}
\end{center}

\end{ex}

\subsubsection{The Composition Product for Singular Graphs}

The composition product formula can be specialized to the $sl_{n}$ polynomials to give 

\begin{equation}
\label{slncomp}
 \mathop{\sum_{f \text{ admissible}}} (q-q^{-1})^{T(f)}q^{mr(D_{f,1})-nr(D_{f,2})} P'_{n}(q,D_{f,1})P'_{m}(q,D_{f,2}) = P'_{m+n}(q,D)
\end{equation}

This formula was extended by Wagner to singular braids in the following way. If $S$ is a singular braid, we can define labelings of $S$ in the same way as labelings for knots. We will drop the admissibility condition at 4-valent vertices since they no longer correspond to positive or negative crossings. Finally, given a labelling $f$ of $S$, let $T_{1}(S_{f,1})$ denote the number of vertices $v \in V_{4}(S)$ at which $f^{-1}(1)$ contains the edges $e_{1}$ and $e_{3}$ in Figure \ref{vertex}.

\begin{figure}[h!]
 \centering
   \begin{overpic}[width=.3\textwidth]{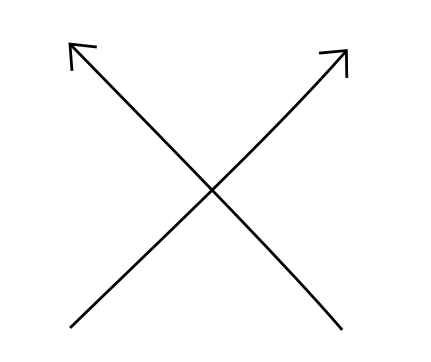}
   \put(23,54){$e_{1}$}
   \put(68,54){$e_{2}$}
   \put(23,23){$e_{3}$}
   \put(68,23){$e_{4}$}
   \end{overpic}
\caption{A labeled 4-valent vertex}
\label{vertex}
\end{figure}

\noindent
Similarly, let $T_{2}(S_{f,1})$ denote the number of vertices $v \in V_{4}(S)$ at which $f^{-1}(1)$ contains the edges $e_{2}$ and $e_{4}$. With this terminology, the composition product for singular braids can be stated as 

\[ P'_{m+n}(S) = \sum_{f \in L(S)} q^{\sigma_{m,n}(f)} P'_{n}(S_{f,1})P'_{m}(S_{f,2})    \]

\noindent
where $P'_{k}(S)$ is the unreduced $sl_{k}$ polynomial of $S$, $\sigma_{m,n}(f)=T_{2}(S_{f,1}) - T_{1}(S_{f,1}) + mr(S_{f,1}) - nr(S_{f,2})$, and $r(S_{f,i})$ is the negative of the number of strands in the singular braid $S_{f,i}$.

\subsection{The Categorification for $sl_{n}$ Homology} \label{bigradedcomp}

Wagner further showed that this relationship is true on the level of categorifications

\[ H_{m+n}(S) = \bigoplus_{f \in L(S)}  H_{n}(S_{f,1}) \otimes H_{m}(S_{f,2}) \{ \sigma_{m,n}(f) \}    \hspace{8mm} (\text{with polynomial grading} ) \]

\noindent
where the $sl_{n}$ homology groups are singly graded, using the polynomial grading $gr_{n}$. In order for us to generalize this theorem, it will be useful to have a bigraded version of it using our $(gr_{q}, gr_{h})$ gradings. Since the bigraded $H_{n}(S)$ lies in a single horizontal grading, namely $gr_{h} = 2r(S)$, and $gr_{n} = gr_{q} + (n-1)gr_{h}/2$, the $q$-grading is uniquely determined by $gr_{n}$:

\[ gr_{n} = gr_{q} + (n-1)r(S) \]

Thus, if we view each $H_{n}(S)$ as singly graded where the grading is $gr_{q}$ instead, the formula becomes 

\[ H_{m+n}(S) \{(n+m-1)r(S)\} =  \hspace{8mm} (\text{with quantum grading} ) \]
\[ \bigoplus_{f \in L(S)}  H_{n}(S_{f,1}) \otimes H_{m}(S_{f,2}) \{ \sigma_{m,n}(f)+(n-1)r(S_{f,1})+(m-1)r(S_{f,2}) \}    \]

\noindent
This can be simplified to 

\[ H_{m+n}(S) = \bigoplus_{f \in L(S)}  H_{n}(S_{f,1}) \otimes H_{m}(S_{f,2}) \{ T_{2}(S_{f,1})- T_{1}(S_{f,1}) - 2nr(S_{f,2})\} \]

Finally, to make this bigraded, we need to add in the horizontal grading. Since $H_{m+n}(S)$ lies in horizontal grading $ 2r(S)$, $H_{n}(S_{f,1})$ lies in horizontal grading $r(S_{f,1})$, and $H_{m}(S_{f,2})$ lies in horizontal grading $r(S_{f,2})$, we see that the tensor product

\[H_{n}(S_{f,1}) \otimes H_{m}(S_{f,2}) \]

\noindent
always lies in horizontal grading $2r(S_{f,1})+2r(S_{f,2})$. Since $r(S_{f,1})+r(S_{f,2})=r(S)$, it follows that the tensor product lies in grading $2r(S)$ - the same grading as $H_{m+n}(S)$. Thus, we can add the horizontal grading to the composition product formula, with no grading shift required for each labeling $f$. The bigraded formula is then 

\[ H_{m+n}(S) = \bigoplus_{f \in L(S)}  H_{n}(S_{f,1}) \otimes H_{m}(S_{f,2}) \{ T_{2}(S_{f,1})- T_{1}(S_{f,1}) - 2nr(S_{f,2}), 0\} \]

\noindent
where the bigrading is given by $(gr_{q}, gr_{h})$.

\subsection{A Categorification for HOMFLY-PT Homology}

In relating these formulas to knot Floer homology, we will be most interested in the case when $n=1$. In this case, the previous formula becomes 

\begin{equation} \label{wagnercomp}
 H_{m+1}(S) = \bigoplus_{f \in L(S)}  H_{1}(S_{f,1}) \otimes H_{m}(S_{f,2}) \{ T_{2}(S_{f,1})- T_{1}(S_{f,1}) - 2r(S_{f,2}), 0\} 
\end{equation}

In this section we will prove a generalization of this formula to HOMFLY-PT homology. Letting $H_{H}(S)$ denote the HOMFLY-PT homology with the standard bigrading $(gr_{q},gr_{h})$, define $H_{H}(S)<k>$ to be HOMFLY-PT homology with a new grading $(gr_{q}+kgr_{h}, gr_{h})$.

\begin{thm} \label{thm2.1}

There is an isomorphism of bigraded vector spaces 
\[ \bigoplus_{f} H_{1}(S_{f,1}) \otimes H_{H}(S_{f,2})\{  T_{2}(S_{f,1}) - T_{1}(S_{f,1})-2r(S_{f,2}),  0 \}  \cong H_{H}(S)<1> \]

\end{thm}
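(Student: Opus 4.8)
\emph{Proof strategy.} The plan is to deduce the stated isomorphism by letting $n\to\infty$ in Wagner's bigraded composition product~(\ref{wagnercomp}), treating HOMFLY-PT homology as the $n=\infty$ version of $sl_n$ homology. The device that makes the limit meaningful is Rasmussen's spectral sequence: by Corollary~\ref{gradinglemma} and the discussion of Section~\ref{sect2.3}, for any singular braid $T$ one has $H_n(T)\cong H^{\pm}(C_n(T))=H_*\!\big(H_H(T),(d_-)^*\big)$, where $(d_-)^*$ is the induced differential on HOMFLY-PT homology, homogeneous of bidegree $(2n,-2)$ in $(gr_q,gr_h)$, and the homology is concentrated in horizontal grading $gr_h=2r(T)$. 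Observe that the factor $H_1(S_{f,1})$ needs no limiting argument: when $n=1$ the potential forces each $U_i$ to act nilpotently on homology, so $H_1(S_{f,1})$ is a finite-dimensional bigraded space supported in $gr_h=2r(S_{f,1})$. Substituting the identification above into~(\ref{wagnercomp}) with $n=m+1$ on $S$ and $n=m$ on each $S_{f,2}$ yields, for every $m\ge 1$,
\[
H_*\!\big(H_H(S),(d_-)^*\big)\;\cong\;\bigoplus_f H_1(S_{f,1})\otimes H_*\!\big(H_H(S_{f,2}),(d_-)^*\big)\{T_2(S_{f,1})-T_1(S_{f,1})-2r(S_{f,2}),0\},
\]
and the task becomes: pass to $m\to\infty$ so that the two $(d_-)^*$-homologies are recovered as $H_H(S)<1>$ and $H_H(S_{f,2})$.

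\emph{Taking the limit.} I would organize this using the structure of $H_H(T)$. Since $C_H(T)=\bigotimes_v C_H(v)$ is a finite tensor product over $R$ of Koszul-type complexes of free modules with bounded shifts, $H_H(T)$ is bounded below in $gr_q$, bounded in $gr_h$ with $gr_h\le 0$, and finite-dimensional in each bidegree; the crucial additional input is that it is supported in $gr_h\ge 2r(T)$, i.e.\ $gr_h(H_H(T))\subseteq[2r(T),0]$. As $(d_-)^*$ raises $gr_q$ by $2n$, it preserves the diagonal $\gamma:=gr_q+n\cdot gr_h$, so $\big(H_H(T),(d_-)^*\big)$ splits along $\gamma$; combining boundedness below in $gr_q$ with the support condition, the aim is to show that for $n$ large relative to a fixed range of $\gamma$ the surviving homology in each diagonal reduces to a single bidegree of $H_H(T)$, carried into $gr_h=2r(T)$ by an $n$-linear shift of $gr_q$ along $gr_h$. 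Reindexing $H_n(T)$ by the inverse shift should then recover $H_H(T)$ bidegree by bidegree. Carrying this out simultaneously for $T=S$ (using $(d_-)^*$ of bidegree $(2m+2,-2)$) and for the finitely many $S_{f,2}$ (bidegree $(2m,-2)$), and noting that all isomorphisms in sight are restrictions of natural comparison maps so that they patch over bidegrees, converts the displayed isomorphism into a bigraded isomorphism of the two sides of the claim.

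\emph{Grading bookkeeping.} It then remains to check that the reindexings produce exactly the decorations in the statement. The reindexing attached to $S$ exceeds the one attached to each $S_{f,2}$ by precisely one unit in the coefficient of $gr_h$, which is by definition the shift $<1>$ on $H_H(S)$. The factor $H_1(S_{f,1})$ occupies the single horizontal grading $2r(S_{f,1})$, so the common part of the reindexing only shifts it in $gr_q$ by a multiple of $2r(S_{f,1})$; combining this with $r(S)=r(S_{f,1})+r(S_{f,2})$ and the horizontal-grading computation of Section~\ref{bigradedcomp} (which places $H_1(S_{f,1})\otimes H_H(S_{f,2})$ in $gr_h=2r(S)$ with no shift) leaves exactly the quantum shift $T_2(S_{f,1})-T_1(S_{f,1})-2r(S_{f,2})$ and makes the horizontal gradings agree, which is the assertion.

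\emph{Main obstacle.} The heart of the matter is the limit step. Two things must be established: first, the support statement $gr_h(H_H(T))\subseteq[2r(T),0]$; and second, more delicately, that after reindexing the homology $H_*\!\big(H_H(T),(d_-)^*\big)$ genuinely reconstructs \emph{all} of $H_H(T)$, rather than only its bottom horizontal slice or an alternating-sum shadow of its bidegrees—this is where one must control the coefficients $u_i(v)$ of $(d_-)^*$, built from $U_i^{\,n}$, in each fixed bidegree, and do so uniformly over the finitely many diagrams $S_{f,2}$. An alternative route to the same end, which may avoid some of this analysis, is to work with Wagner's composition product at the chain level, $C_{m+1}(S)\simeq\bigoplus_f C_1(S_{f,1})\otimes C_m(S_{f,2})$, and to restrict the $d_-$-filtration to the $S_{f,2}$-factor, extracting the $n=\infty$ statement directly from a single finite $m$; either way, controlling how $(d_-)^*$ degenerates as $n$ grows is the main technical content, after which the grading bookkeeping above is mechanical.
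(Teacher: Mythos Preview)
Your high-level strategy matches the paper's: both deduce the theorem from Wagner's bigraded formula~(\ref{wagnercomp}) together with Rasmussen's identification $H_n(T)\cong H_*\big(H_H(T),d_-(n)^*\big)$, and both extract the HOMFLY-PT statement by letting $n$ grow. The gap is exactly where you flag it, and it is not merely a matter of bookkeeping.

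Your limit step claims that, after reindexing along the diagonals preserved by $d_-(n)^*$, the homology $H_n(T)$ ``recovers $H_H(T)$ bidegree by bidegree.'' For any fixed $n$ this is false: on the diagonal through $(q_0,h_0)$ (with $n$ large enough that the terms with $h>h_0$ vanish) one obtains the finite complex
\[
0\longrightarrow H_H^{q_0,h_0}(T)\longrightarrow H_H^{q_0+2n,\,h_0-2}(T)\longrightarrow\cdots,
\]
whose Euler characteristic equals the dimension of $H_n(T)$ in the corresponding bidegree. This is an alternating sum, not $\dim H_H^{q_0,h_0}(T)$ alone; no reindexing of a single $H_n(T)$ reconstructs $H_H(T)$, and the support hypothesis $gr_h\in[2r(T),0]$ does not change this. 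What is true is that the \emph{entire family} $\{H_n(T)\}_{n\gg 0}$, together with the boundedness, characterizes $H_H(T)$ up to bigraded isomorphism.

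The paper isolates this as an abstract uniqueness lemma (Lemma~\ref{thelemma}): any bigraded vector space that is bounded in $gr_h$, bounded below in $gr_q$, finite-dimensional in each bidegree, and for all $n\ge n_0$ carries a differential of bidegree $(2n,-2)$ with homology $H_n(S)$, must coincide with $H_H(S)$. The proof is a dimension comparison by induction on $gr_h$: if the two candidates first disagree at $(i_0,j_0)$ with $j_0$ minimal, choose $n$ so large that the diagonal complex above has no terms with $h>j_0$; the Euler-characteristic identity plus the inductive hypothesis (equality for all $h<j_0$) forces equality at $(i_0,j_0)$. A regrading (Corollary~\ref{cor2.3}) then allows bidegree $(2n-2,-2)$, and one checks that $\bigoplus_f H_1(S_{f,1})\otimes H_H(S_{f,2})\{\cdots\}$, equipped with $d_n=1\otimes d_-(n-1)$, satisfies the hypotheses by~(\ref{wagnercomp}). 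This turns your ``limit'' into a comparison of two objects with the same $H_n$-profile for all large $n$---which is precisely what the alternating-sum information supports---and thereby closes the gap you identified without any analysis of the coefficients $u_i(v)$ or a chain-level composition product.
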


The proof of this theorem will rely heavily the fact that for all $n \ge 1$, there is a differential $d_{-}(n)$ on $H_{H}(S)$ which is homogeneous with bigrading $\{2n,-2\}$, and 

\[H_{*}(H_{H}(S), d_{-}(n)) \cong H_{n}(S) \]

\noindent
where we are viewing $H_{n}(S)$ as a bigraded vector space. (This is the homology $H^{\pm}_{n}(S)$ from Section \ref{sect2.3}.) Recall that as a bigraded vector space, $H_{n}(S)$ lies in a single horizontal grading, namely $gr_{h} = 2r(S)$.

The theorem will be proved in two parts. First we will show that any bigraded vector space with certain algebraic properties must be isomorphic to $H_{H}(S)<k>$, and then we will show that our construction satisfies those properties for $k=1$.

\begin{lem} \label{thelemma}

Let $H(S)$ denote a bigraded vector space with the following properties:
\vspace{2mm}

1) $H(S)$ is bounded above and below in horizontal grading.

2) $H(S)$ is bounded below in q-grading.

3) $H(S)$ is finite dimensional in each bigrading.

4) For all $n \ge n_{0}$, there is a differential $d_{n}$ on $H(S)$ which is homogeneous with bigrading $\{2n,-2\}$ such that $H_{*}(H(S), d_{n}) \cong H_{n}(S)$, where the isomorphism is as bigraded vector spaces.
\vspace{2mm}

Then $H(S) \cong H_{H}(S)$.

\end{lem}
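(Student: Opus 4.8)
\emph{Setup and strategy.} The plan is to reconstruct $H_{H}(S)$ from the tower of $sl_n$ homologies $H_n(S)$ together with the differentials $d_n$, by running the "inverse limit" of Rasmussen's spectral sequence argument. The basic philosophy is that HOMFLY-PT homology is the "free" object of which each $sl_n$ homology is a quotient via $d_n$, and that $d_n$ becomes invisible once $n$ is large enough relative to a fixed bigrading. Concretely, I would fix an arbitrary bigrading $(i,j)$ and show that both $H(S)$ and $H_H(S)$ agree in that bigrading, by comparing how $d_n$ acts near $(i,j)$ for $n \gg 0$. Since everything is finite-dimensional in each bigrading and bounded in the horizontal grading, only finitely many components of $d_n$ can be nonzero around any given bigrading, and for large $n$ the shift $\{2n,-2\}$ pushes the target of $d_n$ out of the relevant finite window in the $q$-grading — \emph{unless} the source sits in a horizontal grading strictly above the target's, which is exactly the mechanism that makes $H_*(H(S),d_n)$ collapse into the single horizontal grading $gr_h = 2r(S)$.

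\emph{Key steps.} First I would record the structural consequences of hypothesis (4): since $H_*(H(S),d_n) \cong H_n(S)$ is supported in the single horizontal grading $gr_h = 2r(S)$ (by the lemma in Section \ref{sect2.3}), the complex $(H(S),d_n)$ must be acyclic off that horizontal grading, and $d_n$ must therefore pair up, in an essentially canonical way, the part of $H(S)$ in horizontal grading $> 2r(S)$ with the part in horizontal grading $< 2r(S)$. Second, I would run a downward induction on the horizontal grading (which is bounded above and below by (1)) to pin down the total dimension of $H(S)$ in each bigrading. Fix a $q$-grading $i$; in the top horizontal grading $j_{\max}$, for $n$ large the $\{2n,-2\}$-shift means $d_n$ restricted to $H(S)_{i,j_{\max}}$ can only hit pieces of $q$-grading $i+2n$ which, combined with hypothesis (2) and finite-dimensionality, forces a comparison identity relating $\dim H(S)_{i,j_{\max}}$ to $\dim H_n(S)_{\ast}$; then peel off one horizontal grading at a time. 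The point is that the Euler-characteristic-type bookkeeping, together with the rank constraints coming from "each $d_n$ has homology $\cong H_n(S)$", determines all the Betti numbers of $H(S)$ uniquely — and $H_H(S)$ itself satisfies (1)--(4) with $n_0 = 1$ by Corollary \ref{gradinglemma} and the $d_-(n)$ differentials discussed after the statement of Theorem \ref{thm2.1}, so it has the same Betti numbers. Third, having matched dimensions in every bigrading, I conclude $H(S) \cong H_H(S)$ as bigraded vector spaces.

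\emph{Alternative, more conceptual route.} Rather than pure dimension counting, one could argue that the data of "a bigraded vector space with a compatible family $\{d_n\}_{n \ge n_0}$ whose homologies realize the $H_n(S)$" is rigid: form the formal complex and observe that $H_H(S)$ with its differentials $d_-(n)$ is a terminal (or initial) such object, so the identity on the common $E_\infty$-type data lifts to an isomorphism. I would make this precise by choosing, for each $n \ge n_0$, a splitting of $H(S)$ as $H_n(S) \oplus (\text{acyclic for } d_n)$, comparing consecutive $n$, and using boundedness to show the acyclic complements stabilize; this produces a basis of $H(S)$ adapted to all $d_n$ simultaneously, matching the analogous basis of $H_H(S)$.

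\emph{Main obstacle.} The delicate point is hypothesis (4) only asserts an isomorphism of bigraded vector spaces $H_*(H(S),d_n) \cong H_n(S)$, not any compatibility between the differentials $d_n$ for different $n$, nor any relation of $d_n$ to an underlying HOMFLY-PT-type $d_+$. So the hard part is showing that the family $\{d_n\}$ is rigid enough to force the bigraded isomorphism type of $H(S)$ — i.e. that knowing the homology of each $d_n$ (with its grading shift) across \emph{all} large $n$ genuinely overdetermines $H(S)$. I expect this to come down to an argument that, in each fixed bigrading, the "unpaired" part of $H(S)$ under $d_n$ stabilizes as $n \to \infty$ to exactly $H_H(S)$ in that bigrading, using the horizontal-grading bound to guarantee that for $n$ large no component of $d_n$ out of (or into) that bigrading can survive except the ones forced by the horizontal-grading collapse. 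Verifying that this stabilization is clean — in particular that no "cancellation at infinity" occurs — is where the real work lies, and it is exactly where hypotheses (1)--(3) (boundedness and local finiteness) get used essentially.
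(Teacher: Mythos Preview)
Your proposal has the right ingredients --- Euler characteristics along the $d_n$-diagonals, choosing $n$ large, using the boundedness hypotheses --- and is essentially what the paper does, but your induction runs in the wrong direction and this is not a cosmetic issue. You propose a \emph{downward} induction on the horizontal grading, starting at $j_{\max}$: at $(i,j_{\max})$ you note that $d_n$ lands in $q$-grading $i+2n$ and try to invoke hypothesis (2). But (2) is a \emph{lower} bound on the $q$-grading, so making $i+2n$ large does not force the target to vanish; the tail of the diagonal with $j<j_0$ goes off to $q=+\infty$, where $H(S)$ can be arbitrarily large. Thus from the top you cannot isolate a single unknown term in the Euler-characteristic identity.

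The paper instead argues by contradiction from the \emph{bottom}: take $j_0$ minimal with $\dim H^{i_0,j_0}(S)\neq\dim H_H^{i_0,j_0}(S)$. For $n$ large the terms on the diagonal with $j>j_0$ have $q$-grading $i_0-2nk<a(S)$ and hence vanish by (2); the terms with $j<j_0$ already agree with $H_H(S)$ by minimality of $j_0$. Since $H_*(H(S),d_n)\cong H_n(S)\cong H_*(H_H(S),d_-(n))$, the two diagonal complexes have equal Euler characteristic, and all terms except the one at $(i_0,j_0)$ are already known to match --- contradiction. Two further remarks: your ``main obstacle'' (compatibility among the $d_n$ for different $n$) is a non-issue, since the argument uses only a single $d_n$ at a time, with $n$ chosen after fixing $(i_0,j_0)$; and the concentration of $H_n(S)$ in horizontal grading $2r(S)$ is not actually used in this lemma --- only the equality of Euler characteristics along each diagonal is needed.
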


Note that HOMFLY-PT homology itself satisfies these conditions, so they are not vacuous. 

\begin{proof}

Let $H^{i,j}$ denote the homology in bigrading $\{i,j\}$, and similarly for $H_{H}^{i,j}(S)$. The lemma states that for any integers $i$ and $j$, $dim(H^{i,j}(S)) = dim(H_{H}^{i,j}(S))$.

Suppose for some $i,j$, the dimensions do not agree. Since $H(S)$ is bounded in horizontal grading, we can take choose $i_{0}$ and $j_{0}$ so that $j_{0}$ is minimized subject to the constraint that $dim(H^{i_{0},j_{0}}(S)) \ne dim(H^{i_{0},j_{0}}_{H}(S))$. Note that the choice for $i_{0}$ may not be unique.

Since both $H(S)$ and $H_{H}(S)$ are bounded below in $q$-grading, there exists a constant $a(S)$ such that for all $i \le a(S)$, $dim(H^{i,j}(S)) = dim(H_{H}^{i,j}(S)) = 0$

Choose $n > max(i_{0}-a, n_{0})$. We will now use the fact that $H_{*}(H(S), d_{n}) \cong H_{n}(S)$ as bigraded vector spaces. We can rewrite $sl_{n}$ homology as the homology of $H_{H}(S)$ with respect to $d_{-}(n)$.

\[ H_{*}(H(S), d_{n}) \cong H_{*}(H_{H}(S), d_{-}(n)) \]

The differentials on both of these complexes have bigrading $\{2n,-2\}$. We can put an equivalence relation on $\Z^{2}$ where $(i,j) \sim (i', j')$ if $(i-i', j-j')=k(2n,-2)$, and both complexes must split according to this equivalence relation. In other words, for a fixed equivalence class $A$, the sum 

\[ \bigoplus_{(i,j) \in A} H^{i,j}(S) \]

\noindent
gives a subcomplex of $(H(S), d_{n})$, and similarly for $(H_{H}(S), d_{-}(n))$. 

Consider the summand corresponding to equivalence class of $(i_{0},j_{0})$. For the complex $(H(S), d_{n})$, this summand looks like

\[ ... \xrightarrow{\hspace{1.5mm}d_{n}\hspace{1.5mm}} H^{i_{0}-2n,j_{0}+2}(S)  \xrightarrow{\hspace{1.5mm}d_{n}\hspace{1.5mm}} H^{i_{0},j_{0}}(S) \xrightarrow{\hspace{1.5mm}d_{n}\hspace{1.5mm}} H^{i_{0}+2n,j_{0}-2}(S)      \xrightarrow{\hspace{1.5mm}d_{n}\hspace{1.5mm}}  ...               \]

\vspace{5mm}
\noindent
and for $(H_{H}(S), d_{-}(n))$,
\[ ... \xrightarrow{d_{-}(n)} H_{H}^{i_{0}-2n,j_{0}+2}(S)  \xrightarrow{d_{-}(n)} H_{H}^{i_{0},j_{0}}(S) \xrightarrow{d_{-}(n)} H_{H}^{i_{0}+2n,j_{0}-2}(S)      \xrightarrow{d_{-}(n)}  ...               \]

\vspace{2mm}
Now, since the complex is bounded below in $q$-grading and we've chosen $n$ sufficiently large, all of the chain groups before the $(i_{0},j_{0})$ summand are trivial, so the complexes become 

\[ ...0 \xrightarrow{\hspace{1.5mm}d_{n}\hspace{1.5mm}} 0  \xrightarrow{\hspace{1.5mm}d_{n}\hspace{1.5mm}} H^{i_{0},j_{0}}(S) \xrightarrow{\hspace{1.5mm}d_{n}\hspace{1.5mm}} H^{i_{0}+2n,j_{0}-2}(S)      \xrightarrow{\hspace{1.5mm}d_{n}\hspace{1.5mm}}  ...               \]

\vspace{5mm}
\noindent
and

\[ ...0 \xrightarrow{d_{-}(n)} 0 \xrightarrow{d_{-}(n)} H_{H}^{i_{0},j_{0}}(S) \xrightarrow{d_{-}(n)} H_{H}^{i_{0}+2n,j_{0}-2}(S)      \xrightarrow{d_{-}(n)}  ...               \]

\noindent

Since both complexes are bounded in horizontal grading, the two chain complexes are both finitely generated. They therefore have a well-defined Euler characteristic, and since the homologies are isomorphic, the Euler characteristics must be the same. Since the alternating sums of the dimension of homology is the same as the alternating sum of the dimension of the chain groups themselves, we have the following:

\[ \sum_{k=0}^{N} (-1)^{k} dim(H^{i_{0}+2nk, j_{0}-2k}(S)) = \sum_{k=0}^{N} (-1)^{k} dim(H_{H}^{i_{0}+2nk, j_{0}-2k}(S)) \]

Furthermore, we know that for $j<j_{0}$, there is an equality $dim(H^{i,j}(S)) = dim(H_{H}^{i,j}(S))$. Thus, the two  subcomplexes above have the same dimension in all of the bigradings except bigrading $(i_{0}, j_{0})$. But this means that for $k\ge 1$, the terms in the two sums are equal, which forces the $k=0$ terms to be equal. This contradicts our assumption that $dim(H^{i_{0},j_{0}}(S)) \ne dim(H_{H}^{i_{0},j_{0}}(S))$, proving the isomorphism.
\end{proof}

\begin{cor} \label{cor2.3}

Let $H(S)$ be a bigraded vector space that satisfies conditions 1-4 in Lemma \ref{thelemma} with one difference - instead of $d_{n}$ having bigrading $\{2n,-2\}$, it has bigrading $\{2n-2k,-2\}$. Then $H(S) \cong H_{H}(S)<k>$.

\end{cor}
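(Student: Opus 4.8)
The plan is to reduce this statement to Lemma \ref{thelemma} by a regrading argument, so that essentially no new work is required. First I would make explicit the bigraded vector space $H'(S) := H(S)<-k>$, i.e.\ the same underlying vector space with the new bigrading $(gr_q - k\, gr_h,\, gr_h)$; note this is the inverse of the operation $<k>$ used in the statement, so that $H'(S)<k> = H(S)$ and it suffices to prove $H'(S) \cong H_H(S)$. Under this regrading, a differential on $H(S)$ that was homogeneous of bidegree $\{2n-2k,-2\}$ becomes homogeneous on $H'(S)$ of bidegree $\{(2n-2k) + k\cdot(-2),\, -2\} = \{2n-4k,-2\}$, which is the wrong shape. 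So instead I would first pin down the correct regrading: we want the shift $<c>$ for which a $\{2n-2k,-2\}$-differential becomes a $\{2n,-2\}$-differential, i.e.\ $(2n-2k) + c\cdot(-2) = 2n$, giving $c = -k$. Thus set $H'(S) := H(S)<-k>$; then each $d_n$ on $H(S)$, reinterpreted on $H'(S)$, has bidegree exactly $\{2n,-2\}$.

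Next I would check that $H'(S)$ satisfies all four hypotheses of Lemma \ref{thelemma}. Boundedness above and below in horizontal grading (condition 1) is unaffected, since the $gr_h$-coordinate is unchanged by $<-k>$. Boundedness below in $q$-grading (condition 2) is preserved because in each fixed horizontal grading only finitely many horizontal gradings occur (by condition 1), and on each the new $q$-grading differs from the old by the constant $-k\cdot gr_h$; being bounded below in finitely many "columns" keeps it bounded below overall. Condition 3, finite-dimensionality in each bigrading, is immediate since $<-k>$ is a bijection on bigradings. For condition 4, I need that $H_*(H'(S), d_n) \cong H_n(S)$ as bigraded vector spaces: we know $H_*(H(S),d_n) \cong H_n(S)$ in the old bigrading, and since $H_n(S)$ is supported in the single horizontal grading $gr_h = 2r(S)$ (recall Corollary \ref{gradinglemma} and the discussion following Theorem \ref{thm2.1}), the regrading $<-k>$ acts on that homology by the \emph{constant} $q$-shift $-2kr(S)$. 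So $H_*(H'(S),d_n)$ is $H_n(S)$ with its $q$-grading shifted by $-2kr(S)$; one either absorbs this into the statement or, more cleanly, observes that composing with the overall shift makes the identification hold on the nose. (If one wants strict equality with the conventions of condition 4, replace $<-k>$ by $<-k>$ followed by $\{2kr(S),0\}$; this does not affect conditions 1--3.)

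Having verified the hypotheses, Lemma \ref{thelemma} applied to $H'(S)$ gives $H'(S) \cong H_H(S)$ as bigraded vector spaces. Undoing the regrading, $H(S) = H'(S)<k> \cong H_H(S)<k>$, which is the claim.

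The only genuinely delicate point — the "main obstacle," though it is minor — is bookkeeping the constant $q$-grading shift introduced on $H_n(S)$ by the regrading, since $H_n(S)$ sits in a single horizontal grading and the shift $<-k>$ therefore acts on it as a global constant rather than trivially. One must be careful to state condition 4 for $H'(S)$ with the matching conventions (or absorb the constant), so that Lemma \ref{thelemma} applies verbatim; everything else is a direct transport of the four finiteness/differential hypotheses through an invertible regrading.
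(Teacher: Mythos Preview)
Your approach is essentially the paper's: shear the bigrading by $\langle -k\rangle$ so that the differentials acquire bidegree $\{2n,-2\}$, invoke Lemma~\ref{thelemma}, and then shear back. The paper's proof is two sentences and does not pause over the issue you flag in condition~4, namely that after regrading the target $H_n(S)$ picks up a constant $q$-shift (since it is supported in the single horizontal degree $2r(S)$); your care here is justified, and your proposed fix of absorbing the constant shift before applying the lemma is the right move. One cosmetic point: your exposition would be cleaner without the initial sign slip and self-correction, since under $\langle -k\rangle$ a $\{2n-2k,-2\}$ differential becomes $\{(2n-2k)-k\cdot(-2),-2\}=\{2n,-2\}$ directly.
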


\begin{proof}

After the change in grading $(gr_{q}, gr_{h}) \mapsto (gr_{q}+kgr_{h}, gr_{h})$, the conditions of Lemma \ref{thelemma} are satisfied, so this homology is isomorphic to $H_{H}(S)$. Shifting back to the original gradings proves the corollary. 
\end{proof}

To prove our main theorem, we now need to show that our homology 

\[ \bigoplus_{f} H_{1}(S_{f,1}) \otimes H_{H}(S_{f,2})\{  T_{2}(S_{f,1}) - T_{1}(S_{f,1}) - 2r(S_{f,2}),  0 \} \]

\noindent
satisfies the conditions of this corollary for $k=1$. It clearly satisfies conditions 1, 2, and 3 from Lemma \ref{thelemma} since each of the summands do, so we just need to define the $d_{n} $ differentials.

We will define $d_{n}$  for $n \ge 2$ as follows. It will preserve the direct sum decomposition, and it will act on each $H_{1}(S_{f,1}) \otimes H_{H}(S_{f,2})$ summand by $1 \otimes d_{-}(n-1)$, where $d_{-}(n-1)$ is the standard $sl_{n-1}$ differential on the HOMFLY-PT homology of $S_{f,2}$.

Since $H_{*}(H_{H}(S_{f,2}),d_{-}(n-1)) \cong H_{n-1}(S_{f,2})$, the homology with respect to $d_{n}$ is 

\[ \bigoplus_{f} H_{1}(S_{f,1}) \otimes H_{n-1}(S_{f,2})\{  T_{2}(S_{f,1}) - T_{1}(S_{f,1})-2r(S_{f,2}),  0 \} \]

From Section \ref{bigradedcomp}, we know that this sum is isomorphic as a bigraded vector space to $H_{n}(S)$. The differential $d_{n}$ has bigrading $\{2n-2, -2\}$, so applying Corollary \ref{cor2.3}, this proves Theorem \ref{thm2.1}.

\section{The Knot Floer Complex at a Vertex in the Cube of Resolutions} \label{HFKChapter}

\subsection{Definition of the Complex}

We will assume that the reader is familiar with Heegaard diagrams and knot Floer homology. For background on the subject, refer to \cite{OS1}, \cite{OS3}, \cite{Rasmussen2}. The oriented cube of resolutions for $\mathit{HFK}$ was originally defined with twisted coefficients by Ozsv\'{a}th and Szab\'{o}, and they noted some similarities between their complex and HOMFLY-PT homology (\hspace{1sp}\cite{Szabo}). The complex was further studied by Gilmore, who reframed the relationships in terms of framed trivalent graphs (\hspace{1sp}\cite{Gilmore}).

However, we will be dealing with the untwisted version defined by Manolescu in \cite{Manolescu}. This is in some ways the most natural version, as it doesn't involve twisted coefficients, and the total homology of the complex is the usual knot Floer homology. The oriented cube of resolutions uses the Heegaard diagram shown in Figure \ref{HeegaardDiagram}.

\begin{figure}[h!]
\begin{subfigure}{.6\textwidth}
  \centering
   \begin{overpic}[width=.9\textwidth]{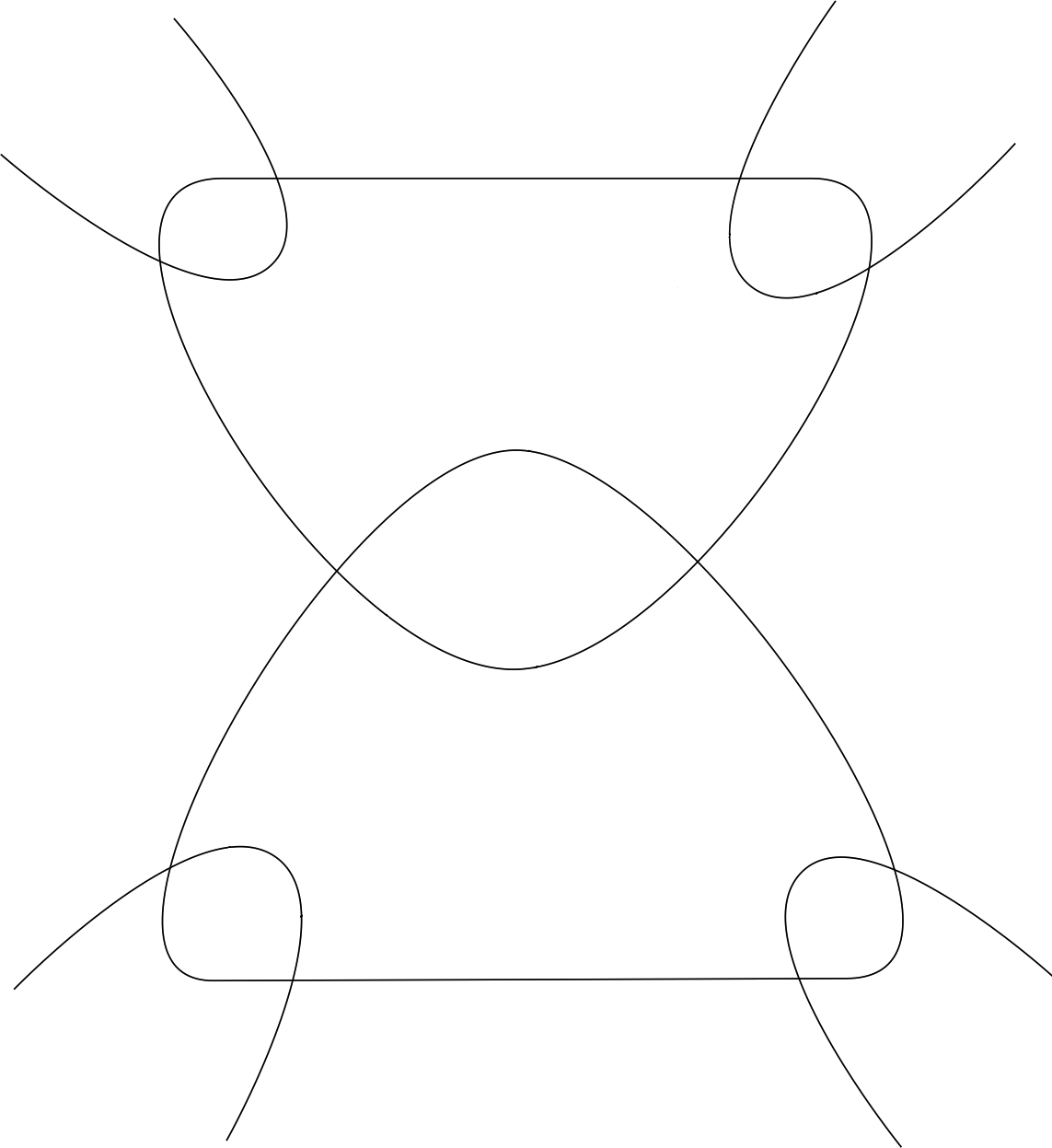} \scriptsize
   \put(17.6,79){$O_{1}$}
   \put(68,79){$O_{2}$}
   \put(19,19.5){$O_{3}$}
   \put(72,19){$O_{4}$}
   \put(41.5,50){$XX$}
   \put(43,86){$\alpha$}
   \put(1,17){$\alpha$}
   \put(89,18){$\alpha$}
   \put(1,81){$\beta$}
   \put(87,83){$\beta$}
   \put(45,11.5){$\beta$}
   \put(13.2,76.4){$\bullet$}
   \put(23.25,83.7){$\bullet$}
   \put(24,85.8){$a_{2}$}
   \put(10.5,75.2){$a_{1}$}
   \put(63.8,83.7){$\bullet$}
   \put(75,76){$\bullet$}
   \put(61,85.5){$b_{1}$}
   \put(76.5,74.5){$b_{2}$}
   \put(28.55,49.55){$\bullet$}
   \put(25,49.55){$c_{1}$}
   \put(60,50.3){$\bullet$}
   \put(62,50.3){$c_{2}$}
   \put(14.1,23.5){$\bullet$}
   \put(11.35,25.5){$d_{1}$}
   \put(24.6,13.8){$\bullet$}
   \put(25.9,11.34){$d_{2}$}
   \put(68.7,14){$\bullet$}
   \put(66.6,12.3){$e_{1}$}
   \put(77.15,23.3){$\bullet$}
   \put(78.5,25){$e_{2}$}
   \end{overpic}
  \caption{The Heegaard diagram at a 4-valent vertex}
  \label{fig2a}
\end{subfigure}%
\begin{subfigure}{.5\textwidth}
  \centering
  \hspace{-10mm}
  \begin{overpic}[width=.53\textwidth]{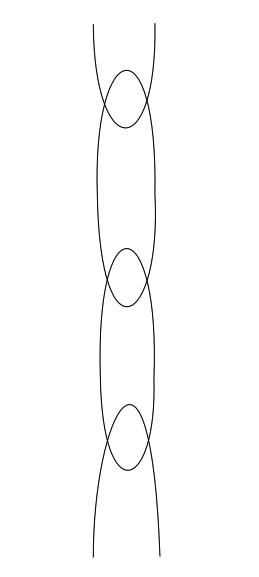} \scriptsize
   \put(19.8,81.5){$O_{1}$}
   \put(14.5,81.5){$f_{1}$}
   \put(26,81.5){$f_{2}$}
   \put(17.25,81.25){$\bullet$}
   \put(24.5,81.5){$\bullet$}
   \put(20,22.5){$O_{2}$}
   \put(14.75,22.5){$h_{1}$}
   \put(26.3,22.5){$h_{2}$}   
   \put(17.75,22.5){$\bullet$}
   \put(24.75,22.5){$\bullet$}
   \put(20.2,50.5){$X$}
   \put(14.75,50.65){$g_{1}$}
   \put(26.3,50.65){$g_{2}$}
   \put(17.75,50.65){$\bullet$}
   \put(24.5,50.65){$\bullet$}
   \put(13.5,66){$\alpha$}
   \put(13.5,93){$\beta$}
   \put(14,35){$\beta$}
   \put(13.5,5){$\alpha$}
  \end{overpic}
  \hspace{7mm} \caption{The Heegaard diagram at a \newline 2-valent vertex}
  \label{fig:sub2}
\end{subfigure}
\caption{The local Heegaard diagram for a singular link}
\label{HeegaardDiagram}
\end{figure}

Note that unlike in \cite{Manolescu}, we do not have a marked edge at which an $\alpha$ and a $\beta$ circle are removed. Instead, we place an additional $X$ and $O$ outside of our braid - we will denote this special $X$ by $X_{0}$. Since discs are not allowed to pass through $X_{0}$, this can be viewed as puncturing the sphere, making our diagram a truly planar diagram. We will also set the $U$ corresponding to the $O$ equal to zero to avoid increasing the ground ring. The net effect of this change is that we have added an unlinked component and reduced it, so the homology is twice that of \cite{Manolescu}.

The knot Floer complex corresponding to this Heegaard diagram is denoted \linebreak $\mathit{CFK^{-}}(S)$. There are several versions of knot Floer homology, and the minus sign refers to the fact that none of the $U_{i}$ in the ground ring will be set to $0$.

The complex ascribed to a vertex in the cube of resolutions, which we will denote $C_{F}(S)$, is the tensor product of $\mathit{CFK^{-}}(S)$ with a certain Koszul complex. Using the terminology from the previous section, we can define $C_{F}(S)$ as follows:

\[ C_{F}(S) = CFK^{-}(S) \otimes \bigotimes_{v \in V_{4}(S)} \Big( R \xrightarrow{\hspace{5mm}L(v)\hspace{5mm}} R \Big)  \]

\noindent
We will denote the Koszul complex by $K(S)$.

\subsubsection{The Generators of $CFK^{-}(S)$}

In order to understand the homology of $C_{F}(S)$, we are going to need some tools for understanding $\mathit{CFK}^{-}(S)$. Let $E(S)$ denote the set of edges of $S$, and let $x$ be a generator of the complex $\mathit{CFK}^{-}(S)$ (i.e. an n-tuple of intersection points of the $\alpha$ and $\beta$ curves). We ascribe a subset $Z$ of $E$ to the generator $x$ as follows. 

Each $O_{i}$ in the Heegaard diagram is contained in a unique minimal bigon. The boundary of this bigon contains two intersection points - if either of these intersection points are in the $n$-tuple $x$, then $e_{i}$ is in $Z$. For example, in Figure \ref{fig2a}, there are 5 types of generators: $(a,d)$, $(a,e)$, $(b,d)$, $(b,e)$, and $(c)$. The underlying sets of edges of these generators are $e_{1}e_{3}$, $e_{1}e_{4}$, $e_{2}e_{3}$, $e_{2}e_{4}$, and $\emptyset$, respectively.

As observed in \cite{Szabo}, $Z$ must satisfy two conditions. First, for any vertex $v$ in $S$, the number of incoming edges in $Z$ must equal the number of outgoing edges in $Z$, and second, $Z$ can not contain all four edges at any 4-valent vertex in $S$. In other words, $Z$ must be a disjoint union of oriented circles contained in $S$. We call such a set of edges a \emph{multi-cycle}. Note that multi-cycles differ from the homological cycles in Section \ref{clearcycle} in that multi-cycles cannot contain all four edges at a vertex.

Let $\mathit{CFK}^{-}(Z)$ denote the $R$-module spanned by generators $x$ where the multi-cycle underlying $x$ is $Z$, and let $C_{F}(Z)$ be the tensor product of $\mathit{CFK}^{-}(Z)$ with the Kozsul complex.

\[ C_{F}(Z) = \mathit{CFK}^{-}(Z) \otimes  \bigotimes_{v \in V_{4}(S)} \Big( R \xrightarrow{\hspace{5mm}L(v)\hspace{5mm}} R \Big)   \]

\subsection{The Filtered Complex and the Spectral Sequence from HOMFLY-PT Homology to HFK}

\subsubsection{A Filtration on $\mathit{CFK}^{-}(S)$} \label{filtration}

It turns out that there is a filtration on $\mathit{CFK}^{-}(S)$ that divides generators according to their underlying cycles. In other words, if there is a filtration-preserving differential $d$ with $d(x)=y$, then $x$ and $y$ have the same underlying cycle.

 This filtration is induced by placing additional basepoints $p_{i}$ in our Heegaard diagram as shown in Figure 3. The markings $p_{i}$ are in canonical bijection with regions in $\R^{2}-S$.

\begin{figure}[h!]
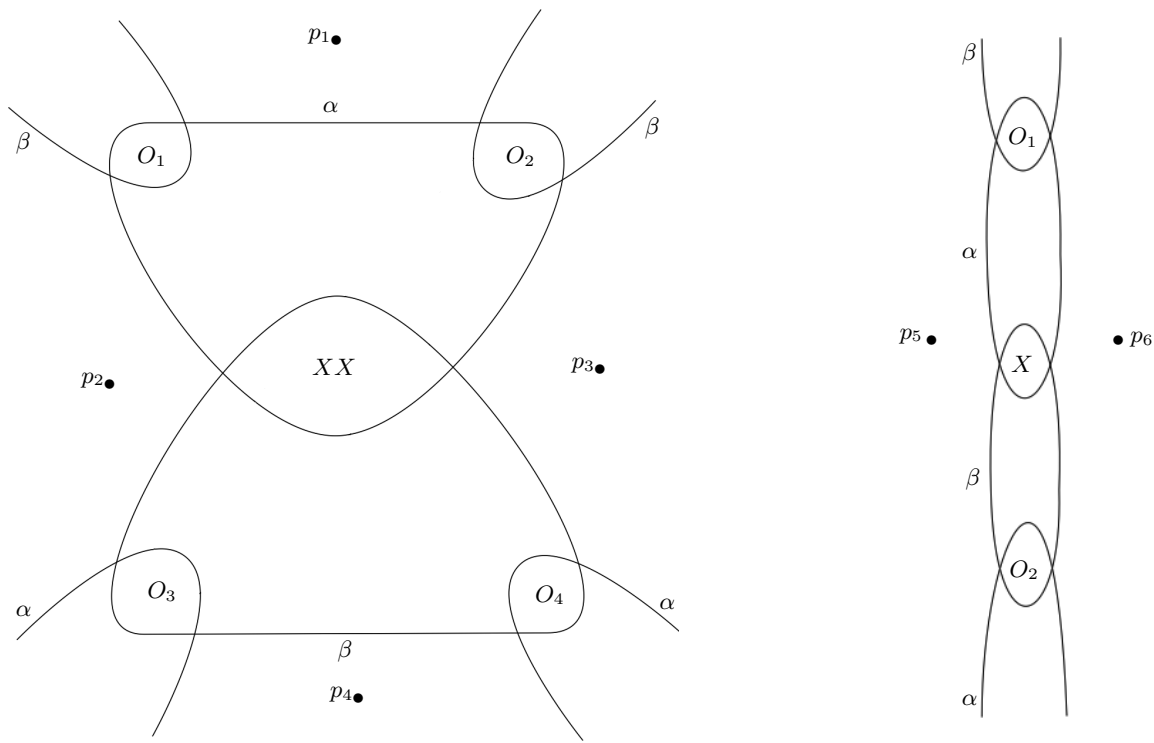

\centering
\begin{subfigure}{.6\textwidth}
  \centering
   \begin{overpic}[width=.9\textwidth]{initial_diagram.png} \scriptsize
   \put(17.6,79){$O_{1}$}
   \put(68,79){$O_{2}$}
   \put(19,19.5){$O_{3}$}
   \put(72,19){$O_{4}$}
   \put(41.5,50){$XX$}
   \put(43,86){$\alpha$}
   \put(1,17){$\alpha$}
   \put(89,18){$\alpha$}
   \put(1,81){$\beta$}
   \put(87,83){$\beta$}
   \put(45,11.5){$\beta$}
   \put(13,48){$\bullet$}
   \put(80,50){$\bullet$}
   \put(47,5){$\bullet$}
   \put(44,95){$\bullet$}
   \put(10,49){$p_{2}$}
   \put(77,51){$p_{3}$}
   \put(44,6){$p_{4}$}   
   \put(41,96){$p_{1}$}
   \end{overpic}
  \label{fig:sub1}
\end{subfigure}%
\begin{subfigure}{.5\textwidth}
  \centering
  \begin{overpic}[width=.53\textwidth]{MarkedEdgeHD.png} \scriptsize
   \put(19.8,81.5){$O_{1}$}
   \put(20,22.5){$O_{2}$}
   \put(20.2,50.5){$X$}
   \put(5,55){$p_{5}$}
   \put(36.5,54.5){$p_{6}$}
   \put(8.5,54){$\bullet$}
   \put(34,54){$\bullet$}
   \put(13.5,66){$\alpha$}
   \put(13.5,93){$\beta$}
   \put(14,35){$\beta$}
   \put(13.5,5){$\alpha$}
  \end{overpic}
  \label{fig:sub2}
\end{subfigure}
\caption{Local Diagrams with Additional Markings}
\label{fig:test}
\end{figure}

\begin{lem} \label{lemma4.1}

These markings define a filtration on the complex $\mathit{CFK}^{-}(S)$, where the change in filtration level of a differential is given by the sum of the multiplicities of the corresponding holomorphic disc at these basepoints. 

\end{lem}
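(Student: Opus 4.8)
The plan is to recognize this as an instance of the general principle that an auxiliary basepoint on a Heegaard diagram, provided it lies off the $\alpha$- and $\beta$-curves (which holds here, as the $p_{i}$ sit in the interiors of regions), induces a filtration on the associated Floer complex, and then to check that the particular placement of the $p_{i}$ makes the construction go through and produces the asserted formula. Throughout I use that a Whitney disc $\phi\in\pi_{2}(x,y)$ has a well-defined domain $D(\phi)$, a $2$-chain on the Heegaard surface whose local multiplicities depend only on $[\phi]$; in particular the multiplicity $n_{p_{i}}(\phi)$ of $D(\phi)$ at the region containing $p_{i}$ is a function of $[\phi]$ alone, and $P(\phi):=\sum_{i}n_{p_{i}}(\phi)$ is additive under juxtaposition of discs.

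To build the filtration, note first that the differential preserves the splitting of $\mathit{CFK}^{-}(S)$ into $\mathrm{Spin}^{c}$-summands, so it is enough to define $\mathcal F$ on each summand; there, any two generators are joined by some disc, so fixing a base generator $x_{0}$, setting $\mathcal F(x_{0})=0$, and putting $\mathcal F(x)=-P(\phi)$ for any $\phi\in\pi_{2}(x_{0},x)$ is the only possibility. This extends $R$-linearly with each $U_{j}$ acting as a filtration isomorphism, since the $U_{j}$-action is recorded by the basepoints $O_{j}$, which are disjoint from the $p_{i}$, and $\mathit{CFK}^{-}(S)$ is free over $R$.

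The substantive point is \emph{well-definedness}: $\mathcal F(x)$ must be independent of $\phi$, which amounts to showing that $P$ vanishes on every periodic domain of the diagram. This is where the geometry enters, and it is the step I expect to be the real work. I would prove it using the description of the Heegaard diagram (Figure \ref{HeegaardDiagram}) as a union of the local models at the $2$- and $4$-valent vertices: enumerate the periodic domains built from these local pieces and verify on each that the total multiplicity at the $p_{i}$ is zero. I expect this is most cleanly packaged as an explicit combinatorial formula for $n_{p_{i}}(\phi)$ in terms of the multi-cycles underlying $x$ and $y$ — roughly, a signed count of strands of those cycles adjacent to the region $\rho_{i}\subset\R^{2}-S$ corresponding to $p_{i}$ — from which independence of $\phi$ is immediate; such a formula would simultaneously explain why the filtration separates generators by their underlying cycle, as asserted in the surrounding discussion.

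Granting well-definedness, the rest is routine. Any $\phi$ contributing to the differential is represented by a holomorphic disc, hence has positive domain, so $n_{p_{i}}(\phi)\ge 0$ for all $i$ and $P(\phi)\ge 0$; therefore each term $U_{j}^{n_{O_{j}}(\phi)}y$ of $\partial x$ satisfies $\mathcal F(U_{j}^{n_{O_{j}}(\phi)}y)=\mathcal F(y)=\mathcal F(x)-P(\phi)\le\mathcal F(x)$, so $\partial$ does not raise $\mathcal F$ and the drop in filtration level along a differential with domain $\phi$ is exactly $P(\phi)=\sum_{i}n_{p_{i}}(\phi)$, which is the statement of the lemma. (In particular, if a differential preserves $\mathcal F$ then $P(\phi)=0$, and since each $n_{p_{i}}(\phi)\ge 0$ this forces $n_{p_{i}}(\phi)=0$ for all $i$, so $x$ and $y$ have the same underlying multi-cycle.) Finally $\mathcal F$ takes only finitely many values on generators, so the filtration is bounded, which is what is needed for the spectral-sequence arguments that follow. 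The main obstacle is the well-definedness step — pinning down the periodic domains of this Heegaard diagram, or equivalently producing the combinatorial formula for $n_{p_{i}}$ in terms of underlying cycles; the remainder is bookkeeping.
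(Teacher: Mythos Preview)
Your framework is exactly right and matches the paper: the whole point is that $P(\phi)=\sum_i n_{p_i}(\phi)$ vanishes on periodic domains, and once that is established the rest is the standard bookkeeping you describe. Where you diverge is in how to prove that vanishing. You propose to enumerate the periodic domains built from the local models, or to extract a combinatorial formula for $n_{p_i}$ in terms of underlying cycles, and you flag this as the ``main obstacle.'' The paper bypasses this entirely with a one-line topological observation: in this planar Heegaard diagram every $\alpha$ and $\beta$ circle is a small loop, and all of the markings $p_i$ together with the special basepoint $X_0$ lie on the \emph{same side} of each such circle. Since the boundary of a periodic domain is an integer combination of complete $\alpha$ and $\beta$ circles, its multiplicity can only change when one crosses such a circle; thus the multiplicity at each $p_i$ equals the multiplicity at $X_0$, which is zero by definition.

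So your outline is sound, but you are working harder than necessary at the crucial step and leave it incomplete. The paper's argument buys you the result with no case analysis and no combinatorics: it uses only the qualitative placement of the $p_i$ relative to the $\alpha$ and $\beta$ curves, which is immediate from Figure~\ref{fig:test}. Your proposed combinatorial formula would eventually yield the same conclusion (and indeed something like it is implicit in the proof of the \emph{next} lemma, where the filtration level is identified with the $2$-chain bounded by $Z$), but it is not needed for Lemma~\ref{lemma4.1} itself.
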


\begin{proof}

It is sufficient to show that any periodic domain has multiplicity zero at these markings. This follows from that fact that for any $\alpha$ or $\beta$ circle, the markings and $X_{0}$ lie on the same side. So for any periodic domain, the multiplicity at any of these points is the same as that of $X_{0}$, which is required to be zero.
\end{proof}

We extend this filtration to $C_{F}(S)$ by placing the Koszul complex in a single filtration level. Let $d_{k}$ denote the component of the differential on $C_{F}(S)$ which increases the filtration by $k$.

\begin{lem}

The differential $d_{0}$ preserves $C_{F}(Z)$, i.e. it does not change the underlying cycle of a generator.

\end{lem}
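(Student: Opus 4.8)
The plan is to reduce at once to the Heegaard Floer factor and then combine Lemma~\ref{lemma4.1} with positivity of domains. Since $C_F(S) = \mathit{CFK}^{-}(S) \otimes K(S)$ and the differential on the Koszul complex $K(S)$ is multiplication by the ring elements $L(v)$, the Koszul factor acts as the identity on the \emph{set} of generators; in particular it cannot change the underlying cycle of a generator, and (as $K(S)$ was placed in a single filtration level) it contributes only to $d_0$. Hence it suffices to show that the filtration-preserving part of the differential on $\mathit{CFK}^{-}(S)$ preserves the underlying cycle. A component of that differential carrying a generator $x$ to a generator $y$ is a signed count of Maslov-index-one holomorphic discs $\phi \in \pi_2(x,y)$ with $n_X(\phi) = 0$ at every $X$ basepoint, weighted by $\prod_i U_i^{n_{O_i}(\phi)}$. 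By Lemma~\ref{lemma4.1} such a $\phi$ changes the filtration level by $\sum_i n_{p_i}(\phi)$; since $\phi$ admits a holomorphic representative its domain $D(\phi)$ has non-negative local multiplicities everywhere, so each $n_{p_i}(\phi) \ge 0$, and the hypothesis that $\phi$ contribute to $d_0$ forces $n_{p_i}(\phi) = 0$ for every added basepoint $p_i$.

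It then remains to prove the geometric statement: if $D \ge 0$ is a domain in $\pi_2(x,y)$ with vanishing multiplicity at every $X$ basepoint and at every $p_i$, then $x$ and $y$ have the same underlying cycle. Because the $p_i$ are in canonical bijection with the regions of $\R^2 - S$, a non-negative domain avoiding all of them and all the $X$'s is supported in the part of the Heegaard surface lying over a neighborhood of the graph $S$, and I would analyze this support vertex by vertex using the local models of Figures~\ref{fig2a} and~\ref{fig:sub2}. Recall from the proof of Lemma~\ref{lemma4.1} that the $p_i$ and $X_0$ all lie on the same side of every $\alpha$ and $\beta$ circle, which constrains on which side of each curve the support of $D$ can run near a vertex. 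Near each $O_i$ there is a unique minimal bigon $B_i$ whose two vertices are precisely the intersection points recording whether $e_i$ belongs to the underlying cycle. Going through the 2-valent and 4-valent local pictures, one checks that every region adjacent to such a bigon other than $B_i$ itself contains either one of the markings $p_i$ or one of the $X$ basepoints; hence a non-negative domain avoiding all of these cannot have boundary crossing the $\alpha$- or $\beta$-arc of $B_i$, and so $x$ and $y$ occupy exactly the same collection of bigons $B_i$, i.e.\ $Z_x = Z_y$.

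The main obstacle is this last, geometric step: checking at the 2-valent and 4-valent vertices that the basepoints $p_i$, together with the $X$'s, genuinely separate the regions that distinguish different underlying cycles, so that no non-negative domain avoiding them can alter which bigons are occupied. Everything else---the reduction to $\mathit{CFK}^{-}(S)$, the application of Lemma~\ref{lemma4.1}, and the appeal to positivity of domains---is formal, and once the local separation statement is in hand the inclusion $d_0(C_F(Z)) \subseteq C_F(Z)$ follows immediately.
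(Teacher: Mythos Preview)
Your reduction to $\mathit{CFK}^{-}(S)$ and the use of positivity to force each $n_{p_i}(\phi)=0$ is exactly right, and up to that point you agree with the paper. Where you diverge is in the second half: you propose a local region-by-region check that a non-negative domain avoiding all $p_i$ and all $X$'s cannot move an intersection point off the bigon $B_i$, and you flag this as the main obstacle. The paper bypasses this local analysis with a global argument. It observes that the collection of basepoint multiplicities can be read as an absolute multi-grading: fixing a generator $y_0$ with empty underlying cycle as a reference, any generator $x$ with cycle $Z$ is connected to $y_0$ by a domain (cf.\ \cite{Szabo}, Section~3), and the multiplicities of that domain at the $p_i$ are exactly the multiplicities of the unique $2$-chain $C$ in $\R^2$ with $\partial C = Z$ and zero multiplicity on the outer region. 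Thus the vector of filtration levels determines $C$, hence $Z$; since $d_0$ preserves this vector, it preserves $Z$.

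What each approach buys: the paper's $2$-chain argument is uniform---it does not require inspecting the local Heegaard pictures at 2-valent and 4-valent vertices, and it makes transparent why one basepoint per planar region is the right amount of data (because $2$-chains are determined by their multiplicities on regions). Your approach is more hands-on and, if the separation claim is verified, would give the same conclusion; it is also closer in spirit to how one actually enumerates the filtration-preserving bigons later in the section. But as written your proposal leaves precisely the crux---the separation claim---unproved, whereas the paper's argument is complete in two sentences once one cites the reference domain from \cite{Szabo}. If you want to complete your route, the separation check does go through (every non-$B_i$ region in the local models carries a $p_j$, an $X$, or the $XX$), but the $2$-chain argument is cleaner and worth knowing.
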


\begin{proof}

Each basepoint gives a filtration on our complex corresponding to a region in the knot projection. Let $x$ be a generator with multi-cycle $Z$, and let $C$ be an oriented 2-chain with boundary $Z$. If we require that $C$ has multiplicity $0$ on the outer region (the one corresponding to $X_{0}$), it is clear that this 2-chain is unique.

Within the planar Heegaard diagram for $K$, we can find a disc that connects $x$ to a generator corresponding to the empty cycle (see \cite{Szabo}, section 3), and since each region in the knot projection contains a basepoint, the multiplicities of the disc at each basepoint will be equal to the multiplicity of $C$ in that region. Thus, each filtration level uniquely determines a 2-chain $C$, whose boundary gives the multi-cycle $Z$. Since no two multi-cycles correspond to the same 2-chain, this completes the proof.
\end{proof}

This means that the filtered homology, also called the homology of the associated graded object, must split over the multi-cycles $Z$. 

\subsubsection{Homology of a Cycle}

Before computing the homology $H(C_{F}(Z),d_{0})$, we will need a definition. If $S$ is a singular braid and $Z$ is a multi-cycle in $S$, let $S-Z$ denote the diagram obtained by removing all edges in $Z$ from $S$. Note that $S-Z$ is still a singular braid because $Z$ is an oriented cycle in the graph.

Given a cycle $Z$, the complex $\mathit{CFK}^{-}(Z)$ is easy to compute. Each intersection point in the Heegaard diagram lies on a unique convex bigon (convex in the traditional planar geometry sense), and this bigon either contains an $X$, an $XX$, or a $O_{i}$. There are canonical bijections between the $O_{i}$ bigons and the edges $e_{i}$, between the $X$ bigons and $V_{2}(S)$ and between the $XX$ bigons and $V_{4}(S)$.

Given a generator $x$, let $W_{2}(x)$ denote the set of vertices at which $x$ has an intersection point on one of the $X$ bigons, and let $W_{4}(x)$ denote the set of vertices at which $x$ has an intersection point on one of the $XX$ bigons. Note that $W_{2}(x)$ and $W_{4}(x)$ are uniquely determined by the underlying cycle $Z$ of $x$. In particular, $W_{2}(x)$ and $W_{4}(x)$ are those vertices which are not endpoints of any edges in $Z$. We can therefore define $W_{2}(Z)$ and $W_{4}(Z)$ accordingly.

The complex for a cycle $Z$ can now be described as follows. Each edge $e_{i}$ in $Z$ corresponds to two intersection points, which are connected by a bigon containing $O_{i}$. These are the only filtered differentials involving these two intersection points, so $\mathit{CFK}^{-}(Z)$ is going to come with a tensor summand of the Koszul complex

\[ \bigotimes_{e_{i} \in Z}  R \xrightarrow{\hspace{5mm}U_{i}\hspace{5mm}} R   \]

Each vertex $v$ in $W_{2}(Z)$ also corresponds to two intersection points. They are connected by two bigons, one which passes through $O_{i}$ (where $e_{i}$ is the outgoing edge from $v$) and one which passes through $O_{j}$ (where $e_{j}$ is the incoming edge at $v$). These two bigons will give a coefficient of $\pm(U_{i}-U_{j})$. Thus, we also get a tensor summand of the Koszul complex

\[ \bigotimes_{v \in W_{2}(Z)}  R \xrightarrow{\hspace{5mm}L(v)\hspace{5mm}} R   \]

\noindent
Proving the signs requires slightly more advance machinery, and will be discussed at the end of the section.

Finally, the vertices $v$ in $W_{4}(Z)$ correspond to two intersection points, also connected by two bigons. One passes through $O_{i}$ and $O_{j}$, where $e_{i}$ and $e_{j}$ are the outgoing edges of $v$, and the other passes through $O_{k}$ and $O_{l}$, where $e_{k}$ and $e_{l}$ are the incoming edges at $v$. These two bigons will contribute a coefficient of $\pm(U_{i}U_{j}-U_{k}U_{l})$, giving us the last Koszul complex

\[\bigotimes_{v \in W_{4}(Z)} R \xrightarrow{\hspace{5mm}Q(v)\hspace{5mm}} R   \]

\noindent
These are all the generators and all the differentials, so the total complex is given by 

\[ \mathit{CFK}^{-}(Z) = \Big[ \bigotimes_{e_{i} \in Z}  R \xrightarrow{\hspace{3mm}U_{i}\hspace{3mm}} R \Big] \otimes \Big[ \bigotimes_{v \in W_{2}(Z)} R \xrightarrow{\hspace{3mm}L(v)\hspace{3mm}} R \Big] \otimes \Big[ \bigotimes_{v \in W_{4}(Z)} R \xrightarrow{\hspace{3mm}Q(v)\hspace{3mm}} R \Big] \]

\noindent
and so the total complex for $C_{F}(Z)$ is given by

\[ C_{F}(Z) = \Big[ \bigotimes_{e_{i} \in Z} R \xrightarrow{\hspace{0mm}U_{i}\hspace{0mm}} R \Big] \otimes \Big[ \bigotimes_{v \in W_{2}(Z)} R \xrightarrow{\hspace{-.25mm}L(v)\hspace{-.25mm}} R \Big] \otimes \Big[ \bigotimes_{v \in W_{4}(Z)} R \xrightarrow{\hspace{-.25mm}Q(v)\hspace{-.25mm}} R \Big] \otimes \Big[ \bigotimes_{v \in V_{4}(S)} R \xrightarrow{\hspace{-.25mm}L(v)\hspace{-.25mm}} R \Big]  \]

\begin{lem}\label{bigthm}

The filtered homology $H_{*}(C_{F}(Z),d_{0})$ is isomorphic to $H_{H}(S-Z)$.

\end{lem}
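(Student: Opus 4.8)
The plan is to recognize $(C_F(Z),d_0)$ as a Koszul complex, strip off the tensor factors coming from the edges of $Z$ (which contribute a regular sequence), and match what remains with the HOMFLY-PT complex $C_H(S-Z)$ vertex by vertex. As a first step I would repackage the complex displayed above by splitting the Koszul factor $\bigotimes_{v\in V_4(S)}(R\xrightarrow{L(v)}R)$ along $V_4(S)=W_4(Z)\sqcup\bigl(V_4(S)\setminus W_4(Z)\bigr)$. For each $v\in W_4(Z)$ I combine its $L(v)$-factor with the $Q(v)$-factor already present in $\mathit{CFK}^{-}(Z)$ to obtain the two-step Koszul complex on $(L(v),Q(v))$, which is precisely the $4$-valent HOMFLY-PT vertex complex $C_H(v)$; and for $v\in W_2(Z)$ the single factor $R\xrightarrow{L(v)}R$ is the $2$-valent $C_H(v)$. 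This rewrites
\[ C_F(Z)\;=\;\Big[\bigotimes_{e_i\in Z}(R\xrightarrow{U_i}R)\Big]\otimes\Big[\bigotimes_{v\in W_2(Z)\cup W_4(Z)}C_H(v)\Big]\otimes\Big[\bigotimes_{v\in V_4(S)\setminus W_4(Z)}(R\xrightarrow{L(v)}R)\Big]. \]

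Next, set $A=\bigotimes_{e_i\in Z}(R\xrightarrow{U_i}R)$ and let $B$ be the tensor product of the remaining two bracketed factors, so $C_F(Z)=A\otimes_R B$. The elements $\{U_i:e_i\in Z\}$ are distinct indeterminates, hence a regular sequence in $R$, so $A$ is a free resolution of $R':=R/(U_i:e_i\in Z)$, the polynomial ring on the edges of $S-Z$. Since $B$ is a bounded complex of free $R$-modules, tensoring the quasi-isomorphism $A\xrightarrow{\sim}R'$ with $B$ yields a quasi-isomorphism $C_F(Z)=A\otimes_R B\xrightarrow{\sim}R'\otimes_R B=:\bar B$, where $\bar B$ is obtained from $B$ by setting $U_i=0$ for every $e_i\in Z$. (Equivalently, the spectral sequence of the double complex filtered by the $A$-degree collapses on the $E_2$ page because $A$ has homology only in degree $0$.) Thus $H_*(C_F(Z),d_0)\cong H_*(\bar B)$.

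It remains to identify $\bar B$ with $C_H(S-Z)$. The vertices occurring in $\bar B$ are exactly $W_2(Z)\cup W_4(Z)\cup\bigl(V_4(S)\setminus W_4(Z)\bigr)$, and I would check these are precisely the vertices of $S-Z$: a $2$-valent vertex of $S$ survives in $S-Z$ iff it is not an endpoint of $Z$, i.e.\ iff it lies in $W_2(Z)$; a $4$-valent vertex of $S$ either has no incident edge in $Z$ and stays $4$-valent (these are $W_4(Z)$), or — since a multi-cycle meets each vertex in one incoming and one outgoing edge — has exactly two incident edges in $Z$ and becomes $2$-valent in $S-Z$ (these are $V_4(S)\setminus W_4(Z)$). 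For $v\in W_2(Z)\cup W_4(Z)$ no edge of $Z$ is incident to $v$, so $L(v)$ and $Q(v)$ are unchanged by the reduction and the corresponding factor of $\bar B$ is the HOMFLY-PT vertex complex of $v$ in $S-Z$. For $v\in V_4(S)\setminus W_4(Z)$ with incoming edges $e_k,e_l$, outgoing edges $e_i,e_j$, and (say) $e_k,e_i\in Z$, the reduction sends $L(v)=U_i+U_j-U_k-U_l$ to $U_j-U_l$, which is exactly the linear term of the $2$-valent vertex that $v$ becomes in $S-Z$. Hence $\bar B=C_H(S-Z)$ factorwise, and tracking the $\{gr_q,gr_h\}$-shifts in the vertex complexes shows this identification is bigraded; therefore $H_*(C_F(Z),d_0)\cong H_H(S-Z)$.

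The homological-algebra step (replacing the Koszul complex on a regular sequence by its quotient ring) is routine once the complex is in the factored form above, so the only place where something genuinely must be verified is the combinatorial bookkeeping of the last paragraph: that deleting a multi-cycle converts exactly its incident $4$-valent vertices into $2$-valent vertices carrying the correct linear term, and that the reduced Koszul factors reassemble — with grading shifts — into $C_H(S-Z)$. I expect this vertex-by-vertex matching, together with the sign claims for the bigon coefficients referenced just before the lemma, to be the main point requiring care.
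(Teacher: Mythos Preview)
Your proposal is correct and follows essentially the same approach as the paper: cancel the Koszul factors on the regular sequence $\{U_i:e_i\in Z\}$ to pass to $R_Z$, then match the remaining factors vertex by vertex with $C_H(S-Z)$, the only nontrivial check being that $L(v)$ for $v\in V_4(S)\setminus W_4(Z)$ reduces to the correct $2$-valent linear term after setting the two $Z$-edge variables to zero. The only cosmetic difference is that you regroup the $L(v)$ and $Q(v)$ factors for $v\in W_4(Z)$ before performing the reduction, whereas the paper does this afterward; either order works.
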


\begin{proof}

The $U_{i}$ in the first tensor product form a regular sequence in $R$, so we can cancel all of these differentials. This has the effect of setting $U_{i}$ equal to zero for all $e_{i}$ in $Z$. Let $R_{Z}$ be the quotient $R/\{U_{i}=0 \text{ for } e_{i} \in Z\}$. Note that this is precisely the ground ring for the singular braid $S-Z$.

We are left with the complex

\[\Big[ \bigotimes_{v \in W_{2}(Z)} R_{Z} \xrightarrow{\hspace{1mm}L(v)\hspace{1mm}} R_{Z} \Big] \otimes \Big[ \bigotimes_{v \in W_{4}(Z)} R_{Z} \xrightarrow{\hspace{1mm}Q(v)\hspace{1mm}} R_{Z} \Big] \otimes \Big[ \bigotimes_{v \in V_{4}(S)} R_{Z} \xrightarrow{\hspace{1mm}L(v)\hspace{1mm}} R_{Z} \Big]  \]

For each 4-valent vertex $v$ in $S-Z$, we have tensor summands $R_{Z} \xrightarrow{\hspace{5mm}L(v)\hspace{5mm}} R_{Z}$ and $R_{Z} \xrightarrow{\hspace{5mm}Q(v)\hspace{5mm}} R_{Z}$, which together give a summand of

\begin{figure}[!h]
\centering
\begin{tikzpicture}
  \matrix (m) [matrix of math nodes,row sep=5em,column sep=6em,minimum width=2em] {
     R_{Z} & R_{Z} \\
     R_{Z} & R_{Z} \\};
  \path[-stealth]
    (m-1-1) edge node [left] {$Q(v)$} (m-2-1)
    (m-1-1) edge node [above] {$L(v)$} (m-1-2)
    (m-1-2) edge node [right] {$Q(v)$} (m-2-2)
    (m-2-1) edge node [above] {$L(v)$} (m-2-2);
\end{tikzpicture}
\end{figure}

\noindent
which is precisely the HOMFLY-PT summand $C_{H}(v)$. For 2-valent vertices $v$ in $S-Z$, there are two possibilities to consider - $v$ is 2-valent in $S$ ($v \in W_{2}(Z))$, and $v$ is 4-valent in $S$ ($v \in V_{4}(S), v \notin W_{4}(Z)$). When $v$ is 2-valent in $S$, we get the summand 

\[R_{Z} \xrightarrow{\hspace{5mm}L(v)\hspace{5mm}} R_{Z} \]

\noindent
which is again the HOMFLY-PT summand $C_{H}(v)$ for $S-Z$. For $v$ 4-valent in $S$, let $e_{i}$ and $e_{j}$ be the outgoing edges at $v$ and $e_{k}$ and $e_{l}$ the incoming edges at $v$. Since $S-Z$ is 2-valent at $v$, we know that $Z$ must include one outgoing edge and one incoming edge. Without loss of generality, assume they are $e_{i}$ and $e_{k}$. To avoid confusion, we will write out the terms of the linear elements, as $L(v)$ refers to $U_{i}+U_{j}-U_{k}-U_{l}$ in $S$, while $L(v)$ refers to $U_{j}-U_{l}$ in $S-Z$.

In $C_{F}(Z)$, we have the summand 

\[R_{Z} \xrightarrow{\hspace{5mm}U_{i}+U_{j}-U_{k}-U_{l}\hspace{5mm}} R_{Z} \]

\noindent
In the HOMFLY-PT complex for $S-Z$, on the other hand, we have the summand

\[R_{Z} \xrightarrow{\hspace{5mm}U_{j}-U_{l}\hspace{5mm}} R_{Z} \]

\noindent
Fortunately, since $e_{i}$ and $e_{k}$ are in $Z$, $U_{i}$ and $U_{k}$ are zero in $R_{Z}$, so $U_{i}+U_{j}-U_{k}-U_{l}=U_{j}-U_{l}$, making the above complexes isomorphic. 

Thus, after canceling the Koszul complex on the edges in $Z$, we get exactly the HOMFLY-PT complex for $S-Z$. It follows that $H_{*}(C_{F}(Z),d_{0}) \cong H_{H}(S-Z)$.
\end{proof}

\begin{cor}\label{cor1} 
The filtered homology decomposes as the direct sum 

\[H_{*}(C_{F}(S), d_{0}) \cong  \bigoplus_{Z} H_{H}(S-Z)\]
\end{cor}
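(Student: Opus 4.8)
The plan is to combine the $R$-module decomposition of $C_F(S)$ by underlying multi-cycle with the per-cycle computation already carried out in Lemma \ref{bigthm}. First I would record that, as bigraded $R$-modules, $\mathit{CFK}^-(S) = \bigoplus_Z \mathit{CFK}^-(Z)$, the sum ranging over all multi-cycles $Z$ in $S$ (including $Z=\emptyset$). This is immediate from the description of the generators: every generator $x$ of $\mathit{CFK}^-(S)$ has a well-defined underlying set of edges $Z\subset E(S)$ — namely those $e_i$ for which an intersection point on the minimal bigon containing $O_i$ appears in $x$ — and the two conditions recalled from \cite{Szabo} guarantee that $Z$ is always a multi-cycle. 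Tensoring with the Koszul complex $K(S)$, which does not depend on $Z$, distributes over this direct sum, so $C_F(S) = \bigoplus_Z C_F(Z)$ as bigraded $R$-modules; the sum is finite since $S$ has finitely many edges.

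Next I would invoke the lemma stating that $d_0$ preserves each $C_F(Z)$, i.e.\ that $d_0$ never changes the underlying multi-cycle of a generator (this includes the Koszul differentials $L(v)$, which act only on the $K(S)$ tensor factor). This says precisely that the module decomposition above is in fact a decomposition of chain complexes, $(C_F(S),d_0) = \bigoplus_Z (C_F(Z),d_0)$. Since homology commutes with finite direct sums,
\[ H_*(C_F(S), d_0) \cong \bigoplus_Z H_*(C_F(Z), d_0). \]
Finally, applying Lemma \ref{bigthm} to each summand replaces $H_*(C_F(Z),d_0)$ by $H_H(S-Z)$, yielding the asserted isomorphism $H_*(C_F(S),d_0)\cong\bigoplus_Z H_H(S-Z)$.

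There is no genuine obstacle here once the earlier lemmas are available: the corollary is a bookkeeping consequence of the fact that the basepoint filtration splits generators by their underlying cycle, together with the identification of each graded piece with a HOMFLY-PT complex. The only points requiring a little care are verifying that the $R$-module splitting of $\mathit{CFK}^-(S)$ survives tensoring with $K(S)$, and keeping track of gradings — since the isomorphisms in Lemma \ref{bigthm} are isomorphisms of bigraded $(gr_q,gr_h)$-vector spaces, the right-hand side inherits its bigrading summand-by-summand — but both are routine.
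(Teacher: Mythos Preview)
Your proposal is correct and matches the paper's (implicit) argument: the paper states Corollary~\ref{cor1} without a separate proof, since the splitting over multi-cycles was already established by the lemma that $d_0$ preserves each $C_F(Z)$, and Lemma~\ref{bigthm} computes each summand. Your write-up simply makes this reasoning explicit; the only superfluous remark is the one about gradings, since the corollary as stated is ungraded and the bigraded refinement (with the appropriate shifts) is worked out separately in Section~\ref{gradings}.
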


\begin{rem}\textbf{Signs}. Since we are discussing Koszul complexes, the $\pm$ in the terms $\pm(U_{i}-U_{j})$ and $\pm(U_{i}U_{j}-U_{k}U_{l})$ are not relevant. Some will have to come with positive signs and some with negative to make $d^{2}=0$, but where they are doesn't impact the chain homotopy type. What we need to show is that the two bigons in each case come with \emph{different} signs.

The two-valent vertex corresponds to a specific $X$ marking in the diagram. This $X$ lies within the same $\alpha$ circle as $O_{i}$ and the same $\beta$ circle as $O_{j}$. In $\mathit{CFK}^{-}$, we do not allow discs to pass through the $X$ basepoints. However, if we do allow them to pass through only this $X$, we get a new complex. In this complex, $d^{2}$ is non-zero - instead, it is a multiple of the identity. This multiple is determined by the $\alpha$ and $\beta$ degenerations, which will correspond to the $\alpha$ and $\beta$ circles containing $X$. Since the $\alpha$ circle contains $O_{i}$, it gives a coefficient of $U_{i}$, and similarly, the $\beta$ circle gives a coefficient of $U_{j}$. It was shown in \cite{Akram} that the moduli spaces can be oriented such that the $\alpha$ and $\beta$ degenerations come with opposite signs, so choosing such a sign convention, this gives 

\[   d^{2}=\pm(U_{i}-U_{j}) I  \]

\noindent
Moreover, the additional differentials are also subject to the basepoint filtration, so we get 

\[   d_{0}^{2}=\pm(U_{i}-U_{j}) I  \]

This $X$ basepoint lies inside a minimal bigon, and this bigon now contributes to the differential with a coefficient of $\pm1$. The local contribution therefore must be the the complex in Figure \ref{fig4.3}, so $U_{i}$ and $U_{j}$ must come with opposite sign.

\begin{figure}[!h]
\centering
\begin{tikzpicture}
  \matrix (m) [matrix of math nodes,row sep=5em,column sep=8em,minimum width=2em] {
     R & R \\};
  \path[-stealth]
    (m-1-1) edge [bend left=15] node [above] {$\pm(U_{i}-U_{j})$} (m-1-2)
    (m-1-2) edge [bend left=15] node [below] {$\pm1$} (m-1-1);
\end{tikzpicture}
\caption{Local complex when allowing discs to pass through the $X$ basepoint.}
\label{fig4.3}
\end{figure}

The argument for the quadratic term is the same, only instead of allowing discs to pass through an $X$, we are allowing them to pass through an $XX$. The $\alpha$ degeneration is $U_{i}U_{j}$ and the $\beta$ degeneration is $U_{k}U_{l}$, and they must come with opposite sign, so we get the complex in Figure \ref{quadcomplex}, which proves that $U_{i}U_{j}$ and $U_{k}U_{l}$ come with opposite sign.

\begin{figure}[!h]
\centering
\begin{tikzpicture}
  \matrix (m) [matrix of math nodes,row sep=5em,column sep=8em,minimum width=2em] {
     R & R \\};
  \path[-stealth]
    (m-1-1) edge [bend left=15] node [above] {$\pm(U_{i}U_{j}-U_{k}U_{l})$} (m-1-2)
    (m-1-2) edge [bend left=15] node [below] {$\pm1$} (m-1-1);
\end{tikzpicture}
\caption{Local complex when allowing discs to pass through the $XX$ basepoint.}
\label{quadcomplex}
\end{figure}

\end{rem}

\subsubsection{Gradings} \label{gradings}

The knot Floer complex comes equipped with two gradings: the Maslov grading $M$ and the Alexander grading $A$. The differential decreases the Maslov grading by 1 and preserves the Alexander grading. Multiplication by $U_{i}$ decreases the Maslov grading by 2 and decreases the Alexander grading by 1.

Certain linear combinations of the Maslov and Alexander gradings return analogs of the quantum and horizontal gradings from the Khovanov-Rozansky complex. Let $gr_{q}$ be given by $-2M+2A$, and $gr_{h}$ by $-2M+4A$. Note that the knot Floer differential has bigrading $\{2,2\}$ with respect to this differential and multiplication by $U_{i}$ changes the bigrading by $\{2,0\}$, the same as the Khovanov-Rozansky complex. Instead of the Maslov and Alexander gradings, we will henceforth use the quantum and horizontal gradings.

Before computing gradings, we need to introduce some terminology. Given a multi-cycle $Z$, let $T_{1}(Z)$ denote the the number of vertices $v \in V_{4}(S)$ at which $Z$ contains the edges $e_{1}$ and $e_{3}$ in Figure \ref{vertex2}. Similarly, let $D_{1}(Z)$ denote the number of vertices at which $Z$ contains the edges $e_{1}$ and $e_{4}$, $D_{2}(Z)$ the number of vertices at which $Z$ contains the edges $e_{2}$ and $e_{3}$, and $T_{2}(Z)$ denote the the number of vertices at which $Z$ contains the edges $e_{2}$ and $e_{4}$. 

\begin{figure}[h!]
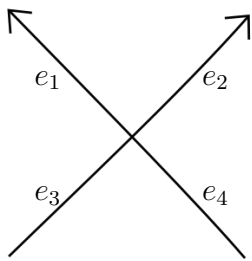

 \centering
   \begin{overpic}[width=.3\textwidth]{Singular.png}
   \put(23,54){$e_{1}$}
   \put(68,54){$e_{2}$}
   \put(23,23){$e_{3}$}
   \put(68,23){$e_{4}$}
   \end{overpic}
\caption{A labeled 4-valent vertex}
\label{vertex2}
\end{figure}

We will now compute the bigrading on the knot Floer complex, up to an overall grading shift. Since the generators corresponding to the empty cycle give a complex which is isomorphic to the HOMFLY-PT complex for $S$, we can choose our overall shift so that this isomorphism preserves the bigrading. 

Let $Z$ denote a multi-cycle in $S$ consisting of $k$ cycles. Viewing $Z$ as a braid diagram for the $k$-component unlink, we can define $r(Z)$ to be the rotation number of $Z$. Thus, $r(Z)=-k$. Let $x$ denote the generator corresponding to $Z$ at the bottom of the Koszul complex (i.e. with the largest horizontal grading). Similarly, let $y$ denote the generator corresponding to the empty cycle with the largest horizontal grading. In \cite{Szabo}, Sz\'{a}bo and Ozsv\'{a}th identify $k$ differentials whose composition takes $x$ to $y$, whose coefficient in $R$ has degree $T_{2}(Z)+\frac{1}{2}(D_{1}(Z)+D_{2}(Z))$. Using the fact that differentials have bigrading $\{2, 2\}$, we can see that $x$ and $y$ differ in grading by 

\[ \{2r(Z) + 2T_{2}(Z) + D_{1}(Z) + D_{2}(Z)), 2r(Z)\} \]

The bottom generator of the HOMFLY-PT complex has bigrading $\{-|V_{4}(S)|,0\}$, so $y$ does as well. Thus, $x$ has bigrading 

\[ \{-|V_{4}(S)| + 2r(Z) + 2T_{2}(Z) + D_{1}(Z)+D_{2}(Z), 2r(Z)\} \]

\noindent
The bottom generator of the HOMFLY-PT complex for $S-Z$ has bigrading 

\noindent
$\{-|V_{4}(S-Z)|,0\}$, so we get the following:

\[ H(C_{F}(Z),d_{0}) \cong H_{H}(S-Z)\{|V_{4}(S-Z)|-|V_{4}(S)| + 2r(Z) + 2T_{2}(Z) + D_{1}(Z)+D_{2}(Z), 2r(Z)\} \]

The grading shift in this formula can be simplified somewhat. The quantity $|V_{4}(S-Z)|-|V_{4}(S)|$ is the negative of the number of 4-valent vertices in $S$ at which $Z$ contains two edges. 

\[ |V_{4}(S-Z)|-|V_{4}(S)| = -T_{1}(Z)-T_{2}(Z)-D_{1}(Z)-D_{2}(Z) \]

\noindent
Thus, the formula becomes 

\[ H(C_{F}(Z),d_{0}) \cong H_{H}(S-Z)\{ 2r(Z) + T_{2}(Z) - T_{1}(Z),  2r(Z)\} \]

\noindent
Our choice for the overall grading shift was somewhat arbitrary, so to make it more similar to our composition product formulas, we will add in a grading shift of $\{-2r(S), 0 \}$ giving

\[ H(C_{F}(Z),d_{0}) \cong H_{H}(S-Z)\{ -2r(S-Z) + T_{2}(Z) - T_{1}(Z),  2r(Z)\} \]

\noindent
and we get a graded version of Corollary \ref{cor1}:

\[H_{*}(C_{F}(S), d_{0}) \cong  \bigoplus_{Z} H_{H}(S-Z)\{  -2r(S-Z) + T_{2}(Z) - T_{1}(Z), 2r(Z)\}  \]

We will be able to connect this formula to the composition product with the following lemma:

\begin{lem} \label{sl1}

Let $f$ be a labeling of $S$. The $sl_{1}$ homology of $S_{f,1}$ is given by 

\[ H_{1}(S_{f,1}) = \begin{cases} 
      \Q \{0, 2r(S_{f,1})\} & \textrm{ if $S_{f,1}$ is a multi-cycle} \\
      0 & \textrm{ otherwise} \\ 
   \end{cases} \]

\end{lem}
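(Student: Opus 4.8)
\textbf{Proof proposal for Lemma \ref{sl1}.}

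The plan is to analyze the $sl_1$ homology $H_1(S_{f,1})$ directly from its definition as a matrix factorization complex, exploiting the fact that for $n=1$ the potential is quadratic and the $sl_1$ polynomial is essentially trivial. First I would recall that $S_{f,1}$ is the diagram $D_{f,1}$ associated to the homological cycle $f^{-1}(1)$ in the singular braid $S$; it retains crossing (here, $4$-valent vertex) information precisely at those vertices where all four adjacent edges lie in $f^{-1}(1)$, and otherwise becomes a collection of arcs and circles. The key dichotomy is whether $f^{-1}(1)$ contains \emph{all four edges} at some $4$-valent vertex of $S$ or not. If it does not, then $S_{f,1}$ is a multi-cycle in the sense of Section \ref{filtration} (a disjoint union of oriented circles passing through no $4$-valent vertex with all four edges), and otherwise it genuinely contains a $4$-valent vertex.

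For the multi-cycle case, I would observe that $S_{f,1}$, as a singular braid, has only $2$-valent vertices, so $C_1(S_{f,1})$ is a tensor product of the $2$-valent $sl_1$ complexes $R' \xrightarrow{L(v)} R'$ with the backward map $u_1(v)$, where $R'$ is the ground ring on the edges of $S_{f,1}$. For $n=1$ the potential at a $2$-valent vertex $v$ with outgoing edge $e_i$, incoming edge $e_j$ is $w_1(v) = U_i^2 - U_j^2 = (U_i+U_j)(U_i-U_j)$, so $u_1(v) = U_i + U_j$. The total complex is then a Koszul-type complex; I would compute its homology by noting that the $L(v) = U_i - U_j$ over all vertices of a single circle force all the $U$'s on that circle to be identified, and then the surviving complex on each circle collapses to a single copy of $\Q$. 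Tracking the quantum and horizontal gradings (using the conventions of Section \ref{HOMFLYsection}, where $H_1$ lies in horizontal grading $2r(S_{f,1})$ and the $2$-valent complex carries shifts $\{0,-2\}$ and $\{0,0\}$) gives the claimed answer $\Q\{0, 2r(S_{f,1})\}$; the cancellation of quantum shifts should work out because the $+2$ from $u_1$-type and $-2$ from the $L$-type contributions balance per circle, matching the normalization $P_1(\text{unknot}) = 1$ after unreduced shifts.

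For the non-multi-cycle case, the cycle $f^{-1}(1)$ contains all four edges at some $4$-valent vertex $v$ of $S$, so $S_{f,1}$ contains that vertex as a genuine $4$-valent (singular) vertex, and $C_1(S_{f,1})$ contains a tensor factor equal to the $4$-valent $C_1(v)$ matrix factorization with potential $w_1(v) = U_i^2 + U_j^2 - U_k^2 - U_l^2$. The point is that $P_1$ of a diagram with a genuine singular vertex vanishes: at $n=1$ the singular vertex relation forces the $sl_1$ polynomial to be zero, since $P_1$ is the Alexander polynomial's relative and a ``thick'' singular edge contributes a factor that kills it; more concretely, I would show the $4$-valent $C_1(v)$ factor is acyclic after tensoring, because the maps $L(v)$, $Q(v)$, $u_1(v)$, $u_2(v)$ generate the unit ideal in the relevant localization (indeed $u_1 L + u_2 Q = w_1$ and one can check $L, Q$ already cut things down so that the $2\times 2$ square with both directions is contractible). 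Hence $H_1(S_{f,1}) = 0$.

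The main obstacle I anticipate is the grading bookkeeping in the multi-cycle case: verifying that the quantum grading of the surviving $\Q$ is exactly $0$ (not some nonzero even integer depending on the number or arrangement of $2$-valent vertices) requires carefully pairing each $2$-valent vertex's contribution, and one must be sure the unreduced normalization and the omitted braid-number shifts are handled consistently with Section \ref{sect2.3}. A secondary subtlety is confirming acyclicity in the singular case at the chain level rather than merely at the level of polynomials — one wants an actual contracting homotopy or a clean regular-sequence argument for the $4$-valent $sl_1$ matrix factorization, since $d^2 = w_1(v) I \neq 0$ locally and the homology is only defined after the global tensor product. I would handle this by passing to the global complex where $\sum_v w_1(v) = 0$, identifying the vertex $v$'s factor together with a neighboring factor, and exhibiting the cancellation explicitly.
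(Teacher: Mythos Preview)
The paper states Lemma \ref{sl1} without proof and immediately applies it, so there is no argument in the paper to compare against; I will comment on your proposal directly.

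Your outline is correct and would yield a proof, but in the non-multi-cycle case you are overlooking the observation that makes the argument immediate. For $n=1$ one has
\[
w_1(v) \;=\; U_i^2+U_j^2-U_k^2-U_l^2 \;=\; (U_i+U_j+U_k+U_l)\,L(v) \,-\, 2\,Q(v),
\]
so $u_2(v) = -2$ is a \emph{unit} in $\Q$. A rank-one matrix factorization with a unit entry is contractible in the homotopy category of matrix factorizations (take the homotopy to be multiplication by the inverse of that unit), and contractibility survives tensor products. Hence $C_1(v)$, and therefore $C_1(S_{f,1})$, is already contractible whenever $S_{f,1}$ contains a $4$-valent vertex. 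This replaces your vaguer ``generate the unit ideal'' remark and dissolves the obstacle you anticipate about having to pass to the global complex before exhibiting a contracting homotopy: the homotopy exists locally.

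For the multi-cycle case your direct computation works, but the grading bookkeeping you flag as the main obstacle can be sidestepped. Corollary \ref{gradinglemma} already forces $gr_h = 2r(S_{f,1})$. Since a multi-cycle is a disjoint union of circles with only $2$-valent vertices and $H_1$ is multiplicative under disjoint union, you are reduced to a single circle, where the answer $\Q\{0,-2\}$ follows either from a two-line direct computation or from $P'_1(\text{unknot})=1$ together with $gr_1 = gr_q$. No per-vertex pairing of quantum shifts is needed.
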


Applying this lemma, the formula becomes
\[ H_{*}(C_{F}(S), d_{0}) \cong   \bigoplus_{f} H_{1}(S_{f,1}) \otimes H_{H}(S_{f,2})\{  T_{2}(S_{f,1}) - T_{1}(S_{f,1})-2r(S_{f,2}),  0 \} \]

\noindent
But by Theorem \ref{thm2.1}, this is isomorphic to $H_{H}(S)<1>$. Thus, we have proved the following theorem:

\begin{thm} \label{filterediso}

There is an isomorphism of bigraded groups 

\[H_{*}(C_{F}(S), d_{0}) \cong H_{H}(S)<1>\]

\end{thm}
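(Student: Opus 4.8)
The plan is to assemble Theorem~\ref{filterediso} from three inputs already in place: the graded refinement of Corollary~\ref{cor1} worked out in Section~\ref{gradings}, the computation of $H_{1}$ of a labeling in Lemma~\ref{sl1}, and the categorified HOMFLY-PT composition product of Theorem~\ref{thm2.1}. No further geometry or homological algebra is needed; the content of this step is purely in reindexing a direct sum and checking that the grading shifts match on the nose.

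First I would record the graded form of the decomposition obtained in Section~\ref{gradings},
\[ H_{*}(C_{F}(S), d_{0}) \cong \bigoplus_{Z} H_{H}(S-Z)\{ -2r(S-Z) + T_{2}(Z) - T_{1}(Z),\ 2r(Z)\}, \]
the sum running over all multi-cycles $Z \subseteq S$. The next step is to pass from a sum over multi-cycles to a sum over labelings $f$ of $S$. A labeling is determined by its homological cycle $f^{-1}(1) = S_{f,1}$, and the multi-cycles are exactly the homological cycles that do not use all four edges at any $4$-valent vertex; so multi-cycles $Z$ are in bijection with the labelings $f$ for which $S_{f,1}$ is a multi-cycle. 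For such an $f$, Lemma~\ref{sl1} gives $H_{1}(S_{f,1}) \cong \Q\{0,\ 2r(S_{f,1})\}$, one-dimensional, whereas for every other labeling $H_{1}(S_{f,1}) = 0$, so those labelings contribute nothing once we tensor by $H_{1}(S_{f,1})$. Writing $Z = S_{f,1}$, so that $S - Z = S_{f,2}$, $r(Z) = r(S_{f,1})$, $r(S-Z) = r(S_{f,2})$ and $T_{i}(Z) = T_{i}(S_{f,1})$, a direct comparison of shifts shows
\[ H_{1}(S_{f,1}) \otimes H_{H}(S_{f,2})\{ T_{2}(S_{f,1}) - T_{1}(S_{f,1}) - 2r(S_{f,2}),\ 0\} \cong H_{H}(S-Z)\{ -2r(S-Z) + T_{2}(Z) - T_{1}(Z),\ 2r(Z)\}, \]
the horizontal shift $2r(S_{f,1})$ supplied by $H_{1}$ being exactly the $2r(Z)$ appearing on the Floer side. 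Hence
\[ H_{*}(C_{F}(S), d_{0}) \cong \bigoplus_{f} H_{1}(S_{f,1}) \otimes H_{H}(S_{f,2})\{ T_{2}(S_{f,1}) - T_{1}(S_{f,1}) - 2r(S_{f,2}),\ 0\}. \]

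Finally, the right-hand side is precisely the bigraded vector space on the left-hand side of Theorem~\ref{thm2.1}, which identifies it with $H_{H}(S)<1>$; this completes the proof. I do not expect a real obstacle at this stage, since all the difficulty is already absorbed by Lemma~\ref{bigthm}, the grading computation of Section~\ref{gradings}, and (above all) Theorem~\ref{thm2.1}. The one point meriting care is the reindexing: one must confirm that the homological cycles that fail to be multi-cycles genuinely drop out — which is exactly the second branch of Lemma~\ref{sl1} — and that the $H_{1}$-shift absorbs the $2r(Z)$ term exactly, so that the resulting isomorphism is bigraded rather than merely graded up to a global shift.
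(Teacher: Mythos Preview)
Your proposal is correct and follows essentially the same route as the paper: it assembles the theorem from the graded decomposition of Section~\ref{gradings}, uses Lemma~\ref{sl1} to reindex the sum over multi-cycles as a sum over labelings (absorbing the $2r(Z)$ horizontal shift into $H_{1}(S_{f,1})$), and then invokes Theorem~\ref{thm2.1}. This is exactly how the paper argues, and your attention to the two checkpoints---that non-multi-cycle labelings drop out via the vanishing branch of Lemma~\ref{sl1}, and that the grading shifts line up on the nose---matches the paper's own care at this step.
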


\begin{cor}

There is a spectral sequence whose $E_{1}$ page is $H_{H}(S)<1>$ which converges to $H_{F}(S)$.

\end{cor}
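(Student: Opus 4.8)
The plan is to obtain this as the spectral sequence associated to the filtered complex $(C_F(S), d_{tot})$, where the filtration is the basepoint filtration of Section~\ref{filtration} extended to $C_F(S)$ by placing the Koszul complex $K(S)$ in a single filtration level. First I would check that this really is a filtration of chain complexes: by Lemma~\ref{lemma4.1} the markings $p_i$ induce a filtration on $\mathit{CFK}^-(S)$, and tensoring with $K(S)$ in a fixed filtration level preserves the filtered structure, so the decomposition $d_{tot} = d_0 + d_1 + \cdots + d_k$ into the components raising the filtration by $0, 1, \dots, k$ is well defined. (Setting the $U$ associated to the extra basepoint $O$ equal to zero does not affect this, as it only passes to a quotient ring.)

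Next I would address convergence. The complex $C_F(S)$ is generated over $R$ by finitely many generators — tuples of intersection points of the $\alpha$ and $\beta$ curves, tensored over the finitely many vertices of the Koszul cube — and each generator lies in a well-defined filtration level, so the filtration on $C_F(S)$ is bounded and only finitely many $d_k$ are nonzero. A bounded filtration of a chain complex always yields a spectral sequence converging to the homology of the total complex, which by definition is $H_F(S)$.

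Finally I would identify the $E_1$ page. By construction the $E_0$ page is the associated graded complex equipped with the associated graded differential, which is exactly $d_0$; hence $E_1 = H_*(C_F(S), d_0)$ together with the induced bigrading. Theorem~\ref{filterediso} identifies this bigraded group with $H_H(S)<1>$, which is precisely the claim about the $E_1$ page, so the corollary follows.

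The only real subtlety — and the step I would treat most carefully — is the bookkeeping around convergence and gradings: one must confirm that extending the filtration to the Koszul factor (and passing to the quotient ring) keeps the filtration bounded, and that the induced bigrading on $E_1$ matches the shifted bigrading of Theorem~\ref{filterediso} on the nose rather than merely up to an overall shift. Everything else is the standard homological algebra of bounded filtered complexes.
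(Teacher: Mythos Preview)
Your proposal is correct and is precisely the paper's argument: the corollary is simply the spectral sequence induced by the basepoint filtration on $C_F(S)$, with Theorem~\ref{filterediso} identifying the $E_1$ page. The paper's proof is a single sentence to this effect; you have expanded the convergence and bookkeeping details, but the approach is identical.
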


\begin{proof}

This is just the spectral sequence induced by the basepoint filtration on $C_{F}(S)$.
\end{proof}

Manolescu's conjecture is thus equivalent to this spectral sequence collapsing at the $E_{1}$ page.

\subsection{Additional Differentials and the Spectral Sequences from HFK to $sl_{n}$}

We are going to add differentials to the complex $C_{F}(S)$ so that the total homology is isomorphic to $H_{n+1}(S)$ for any $n \ge 1$. These new differentials do not preserve the Alexander grading, so using the Alexander grading as a filtration, this induces a spectral sequence from $H_{F}(S)$ to $H_{n+1}(S)$.

The complex $C_{F}(S)$ is constructed as a tensor product of complexes $\mathit{CFK}^{-}(S)$ and a Koszul complex $K(S)$ on linear elements. 

\[ C_{F}(S) = \mathit{CFK}^{-}(S) \otimes K(S) \]

The complex $\mathit{CFK}^{-}(S)$ does not count discs which pass through the $X$ or $XX$ markings. For the new differential, we are going to count these discs with certain polynomial coefficients.

Each $X$ marking in the Heegaard diagram corresponds to a 2-valent vertex $v$ in $S$. Whenever a holomorphic disc passes through this $X$ with multiplicity $k$, it picks up a coefficient of $u_{1}(v)^{k}$. The only exception is the special marking $X_{0}$, at which we still require discs to have multiplicity 0. Similarly, each $XX$ corresponds to a 4-valent vertex $v$ in $S$. If a holomorphic disc passes through this $XX$ with multiplicity $k$, it picks up a coefficient of $u_{2}(v)^{k}$. We will call this new complex $\mathit{CFK}^{-}_{n}(S)$.

Note that there is no guarantee that the differential on this complex squares to zero - in fact, it doesn't. 

To fix this, we will also modify the differential on the Koszul complex. Originally, it was given by 

\[ K(S) = \bigotimes_{v \in V_{4}(S)} R \xrightarrow{\hspace{5mm}L(v)\hspace{5mm}} R  \]

\newcommand{\myrightleftarrows}{\mathrel{\substack{\xrightarrow{\hspace{5mm}L(v) \hspace{5mm}} \\[-.4ex] \xleftarrow[\hspace{5mm}u_{1}(v) \hspace{5mm}]{}}}}

\noindent
We are going to add in differentials to make it a matrix factorization:

\[
K_{n}(S) = \bigotimes_{v \in V_{4}(S)} R \mathrel{\substack{\\\myrightleftarrows\\}}  R
\]

\noindent
The total complex $C_{F(n)}(S)$ is defined to be the tensor product of $\mathit{CFK}^{-}_{n}(S)$ and $K_{n}(S)$.

\[ C_{F(n)}(S) = \mathit{CFK}^{-}_{n}(S) \otimes K_{n}(S) \]

\begin{lem}

The differential on $C_{F(n)}(S)$ satisfies $d^{2}=0$.

\end{lem}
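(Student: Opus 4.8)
The plan is to verify $d^2 = 0$ by exploiting the tensor-product structure $C_{F(n)}(S) = \mathit{CFK}^{-}_{n}(S) \otimes K_{n}(S)$ and the fact that both factors carry differentials that square to a multiple of the identity, with the two multiples cancelling. Since the differential on a tensor product of complexes satisfies $d_{\otimes}^2 = d_1^2 \otimes \mathrm{id} + \mathrm{id} \otimes d_2^2$ (the cross terms cancel by the Koszul sign rule), it suffices to show that $d_{\mathit{CFK}^{-}_{n}}^2 = c \cdot I$ and $d_{K_{n}}^2 = -c \cdot I$ for the same element $c \in R$, where $c$ will turn out to be a sum of potentials over the vertices.

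First I would compute $d_{K_{n}(S)}^2$. The complex $K_{n}(S)$ is a tensor product over $v \in V_{4}(S)$ of the two-term matrix factorizations $R \rightleftarrows R$ with maps $L(v)$ and $u_{1}(v)$; each such factor squares to $u_{1}(v)L(v) \cdot I$. Since matrix-factorization potentials are additive under tensor product, $d_{K_{n}(S)}^2 = \sum_{v \in V_{4}(S)} u_{1}(v)L(v) \cdot I$. Now $u_1(v)L(v) + u_2(v)Q(v) = w_n(v)$ by the defining property of $u_1, u_2$ recalled in Section \ref{HOMFLYsection}, so this equals $\sum_{v \in V_{4}(S)} \big( w_n(v) - u_2(v)Q(v)\big) I$.

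Next I would compute $d_{\mathit{CFK}^{-}_{n}(S)}^2$ using the same degeneration/broken-disc analysis that appears in the Signs remark. In ordinary $\mathit{CFK}^{-}$ the differential squares to zero because no discs pass through the $X$ or $XX$ markings; here we do allow discs through them (except $X_0$), weighted by $u_1(v)^k$ at a $2$-valent vertex's $X$ and $u_2(v)^k$ at a $4$-valent vertex's $XX$. Following the argument in the Signs remark: for each marking the failure of $d^2 = 0$ is governed by the $\alpha$- and $\beta$-boundary degenerations of index-$2$ discs passing through that marking. For the $X$ at a $2$-valent vertex $v$, the degenerations contribute $U_i$ (from the $\alpha$-circle containing $O_i$) and $U_j$ (from the $\beta$-circle containing $O_j$), weighted by $u_1(v)$, giving $u_1(v)(U_i - U_j) = u_1(v)L(v) = w_n(v)$ (recall $L(v) = U_i - U_j$ and $u_1(v)L(v) = w_n(v)$ for $2$-valent vertices); with the opposite-sign orientation convention from \cite{Akram} these combine correctly. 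For the $XX$ at a $4$-valent vertex $v$, the degenerations contribute $U_iU_j$ and $U_kU_l$, weighted by $u_2(v)$, giving $u_2(v)(U_iU_j - U_kU_l) = u_2(v)Q(v)$. Hence $d_{\mathit{CFK}^{-}_{n}(S)}^2 = \big(\sum_{v \in V_2(S)} w_n(v) + \sum_{v \in V_4(S)} u_2(v)Q(v)\big) I$, where the $X_0$ marking contributes nothing since discs still cannot pass through it (its vertex is the one whose potential would otherwise appear, but the extra unlinked reduced component carries no potential term).

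Adding the two contributions, the $u_2(v)Q(v)$ terms cancel and we are left with $d^2 = \big(\sum_{v \in V_2(S)} w_n(v) + \sum_{v \in V_4(S)} w_n(v)\big) I = \big(\sum_{v \in S} w_n(v)\big) I$, which is zero by the telescoping argument already given in Section \ref{HOMFLYsection}: each edge $e_i$ is outgoing at exactly one vertex (contributing $+U_i^{n+1}$) and incoming at exactly one vertex (contributing $-U_i^{n+1}$). I expect the main obstacle to be the $\mathit{CFK}^{-}_{n}(S)$ computation — specifically, making rigorous that inserting the monomial weights $u_1(v)^k$, $u_2(v)^k$ interacts correctly with the broken-disc / boundary-degeneration count so that the weighted $d^2$ is exactly $\sum u_1(v)L(v) + \sum u_2(v)Q(v)$ times the identity, including getting all signs consistent via the orientation scheme of \cite{Akram}, and correctly accounting for higher-multiplicity discs (multiplicity $k \geq 2$ through a marking) whose contributions must organize into powers of $u_1(v)$ or $u_2(v)$; handling the interaction with the $X_0$ basepoint constraint is a secondary bookkeeping point.
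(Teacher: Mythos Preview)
Your proposal is correct and follows essentially the same argument as the paper: both split $d^2$ into the $K_n(S)$ contribution $\sum_{v\in V_4(S)} L(v)u_1(v)$ (computed directly) and the $\mathit{CFK}^-_n(S)$ contribution $\sum_{v\in V_2(S)} w_n(v) + \sum_{v\in V_4(S)} Q(v)u_2(v)$ (computed via $\alpha$- and $\beta$-degenerations with the sign convention of \cite{Akram}), then combine using $L(v)u_1(v)+Q(v)u_2(v)=w_n(v)$ to obtain $\sum_{v\in S} w_n(v)=0$. The only cosmetic difference is that you rewrite $L(v)u_1(v)$ as $w_n(v)-Q(v)u_2(v)$ before summing, whereas the paper combines the two $V_4$ terms at the end.
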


\begin{proof}

At each vertex $v$ in $S$, $d^{2}$ is going have a local contribution of $w_{n}(v)$, which is given by  \[ \sum_{e_{i} \in E_{out}} U_{i}^{n+1} -  \sum_{e_{j} \in E_{in}} U_{j}^{n+1} \]

\noindent
The lemma will then follow from the equality $\sum_{v \in S} w_{n}(v) = 0$.

The quantity $d^{2}$ has two contributions, one from $\mathit{CFK}^{-}_{n}(S)$ and one from $K_{n}(S)$. The contribution from $K_{n}(S)$ can be computed directly to be 

\[ \sum_{v \in V_{4}(S)} L(v)u_{1}(v) \]

The contribution from $\mathit{CFK}^{-}_{n}(S)$ can be computed via the $\alpha$ and $\beta$ degenerations. We orient the moduli spaces so that the $\alpha$ and $\beta$ degenerations come with opposite signs, with the $\alpha$ degenerations being positive and the $\beta$ degenerations negative (see \cite{Akram} for an explanation of the signs).

At each 2-valent vertex $v$ in $S$, we have one $\alpha$ circle and one $\beta$ circle, shown in Figure \ref{BivalentDegenerations} . The $\alpha$ circle contains $U_{i}$ and $X$, and the $X$ contributes coefficient $u_{1}(v)$, so the $\alpha$ degeneration contributes $U_{i}u_{1}(v)$. Similarly, the $\beta$ circle contains $U_{j}$ and $X$, so its contribution is $-U_{j}u_{1}(v)$. Thus, the net contribution at $v$ is $(U_{i}-U_{j})u_{1}(v)$. This can be simplified to $L(v)u_{1}(v) = w_{n}(v)$.

\begin{figure}[h!]
\centering
\begin{subfigure}{.5\textwidth}
  \centering
   \begin{overpic}[width=.38\textwidth]{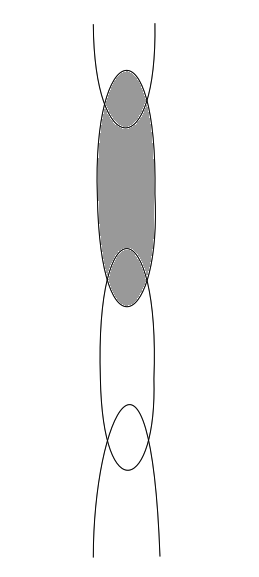} \tiny
   \put(19.8,81.5){$O_{i}$}
   \put(20,22.5){$O_{j}$}
   \put(20.2,50.5){$X$}
   \put(13.5,66){$\alpha$}
   \put(13.5,93){$\beta$}
   \put(14,35){$\beta$}
   \put(13.5,5){$\alpha$}
   \end{overpic}
  \label{fig:sub1}
\end{subfigure}%
\begin{subfigure}{.5\textwidth}
  \centering
  \begin{overpic}[width=.38\textwidth]{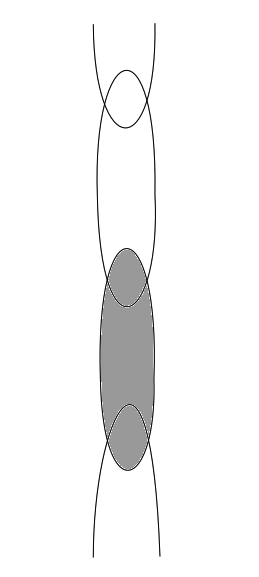} \tiny
   \put(19.8,81.5){$O_{i}$}
   \put(20,22.5){$O_{j}$}
   \put(20.2,50.5){$X$}
   \put(13.5,66){$\alpha$}
   \put(13.5,93){$\beta$}
   \put(14,35){$\beta$}
   \put(13.5,5){$\alpha$}
  \end{overpic}
  \label{fig:sub2}
\end{subfigure}
\caption{$\alpha$-degenrations (left) and $\beta$-degenerations (right) at a bivalent vertex}
\label{BivalentDegenerations}
\end{figure}

At each 4-valent vertex $v$ in $S$, we also have one $\alpha$ circle and one $\beta$ circle, shown in Figure \ref{4valentdegen}. The $\alpha$ circle contains $U_{i}, U_{j},$ and $XX$, and the $XX$ contributes coefficient $u_{2}(v)$, so its contribution is $U_{i}U_{j}u_{2}(v)$. The $\beta$ circle contains $U_{k}$, $U_{l}$, and $XX$, so its contribution is $-U_{k}U_{l}u_{2}(v)$. Thus, the net contribution at $v$ is $(U_{i}U_{j}-U_{k}U_{l})u_{2}(v)$. This can be simplified to $Q(v)u_{2}(v)$.

\begin{figure}[h!]
\centering
\begin{subfigure}{.5\textwidth}
  \centering
   \begin{overpic}[width=.8\textwidth]{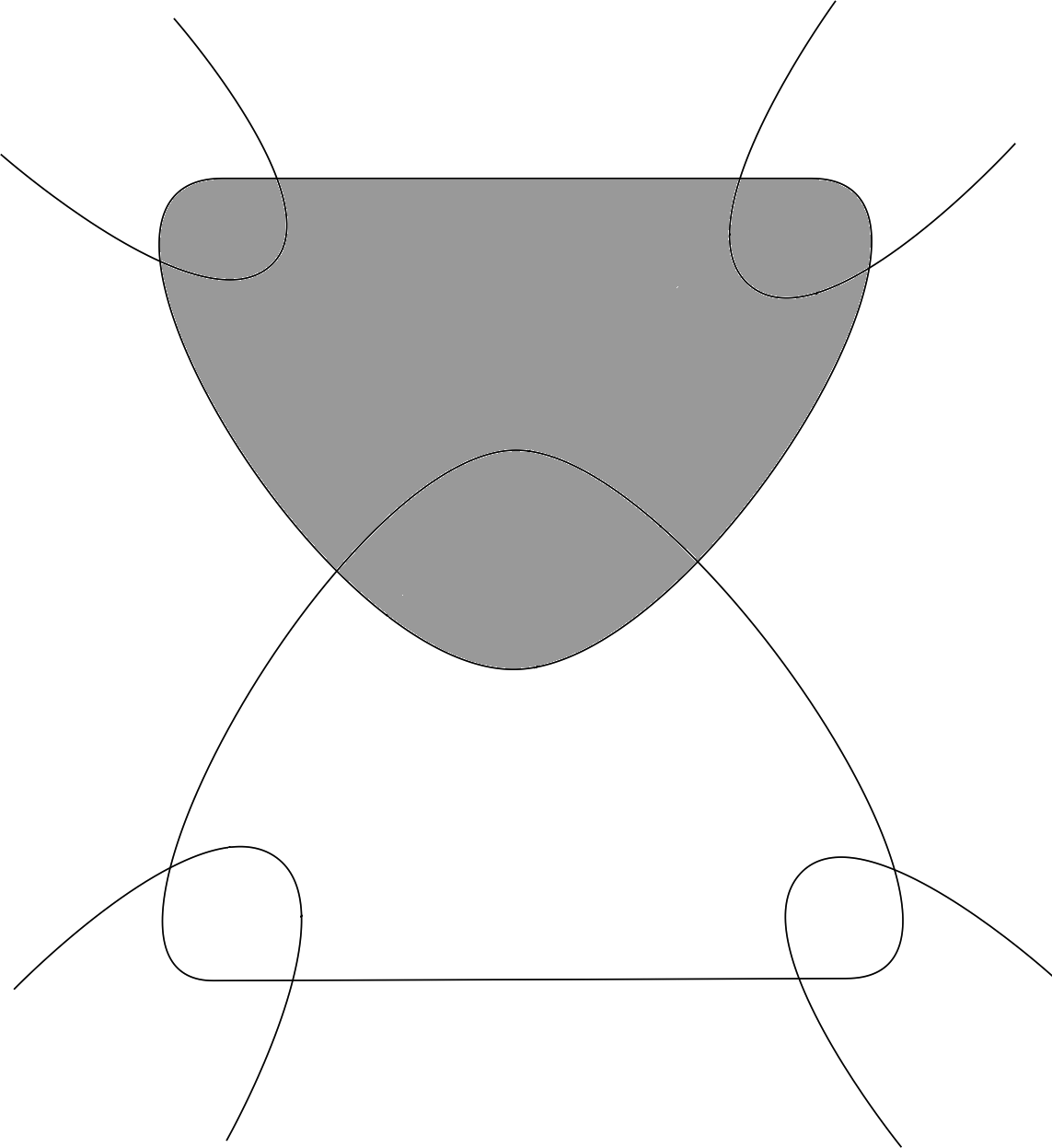} \tiny
   \put(17.6,79){$O_{i}$}
   \put(68,79){$O_{j}$}
   \put(19,19.5){$O_{k}$}
   \put(72,19){$O_{l}$}
   \put(41.5,50){$XX$}
   \put(43,86){$\alpha$}
   \put(1,17){$\alpha$}
   \put(89,18){$\alpha$}
   \put(1,80){$\beta$}
   \put(87,83){$\beta$}
   \put(45,11){$\beta$}
   \end{overpic}
  \label{fig:sub1}
\end{subfigure}%
\begin{subfigure}{.5\textwidth}
  \centering
   \begin{overpic}[width=.8\textwidth]{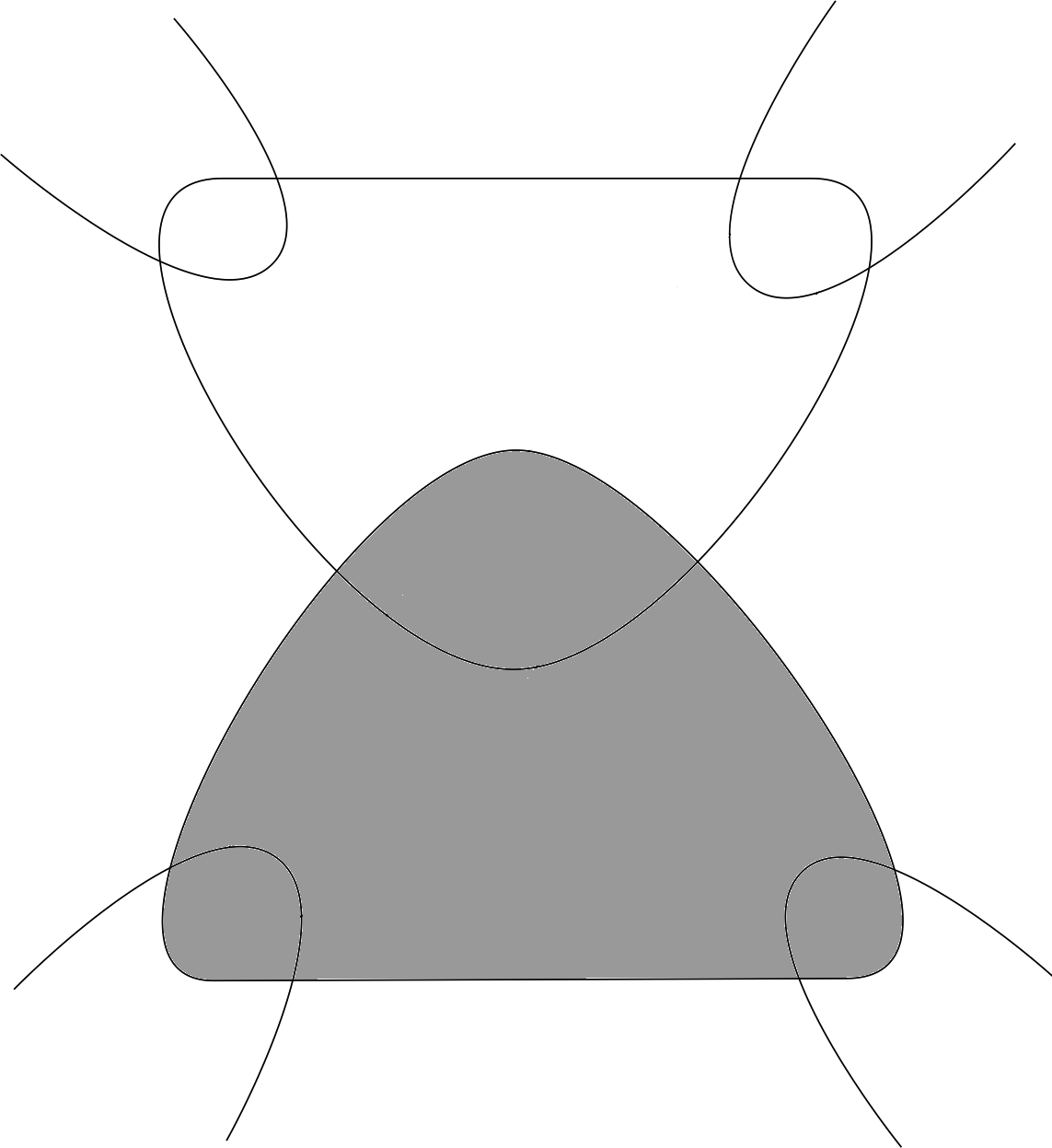} \tiny
   \put(17.6,79){$O_{i}$}
   \put(68,79){$O_{j}$}
   \put(19,19.5){$O_{k}$}
   \put(72,19){$O_{l}$}
   \put(41.5,50){$XX$}
   \put(43,86){$\alpha$}
   \put(1,17){$\alpha$}
   \put(89,18){$\alpha$}
   \put(1,80){$\beta$}
   \put(87,83){$\beta$}
   \put(45,11){$\beta$}
  \end{overpic}
  \label{fig:sub2}
\end{subfigure}
\caption{$\alpha$-degenrations (left) and $\beta$-degenerations (right) at a four-valent vertex}
\label{4valentdegen}
\end{figure}

Thus, counting the contribution from $K_{n}(S)$, we see that $d^{2}$ is given by

\begin{equation*}
\begin{split}
d^{2} =& \sum_{v \in V_{2}(S)} w_{n}(v) +  \sum_{v \in V_{4}(S)} Q(v)u_{2}(v) + \sum_{v \in V_{4}(S)} L(v)u_{1}(v) \\
=& \sum_{v \in V_{2}(S)} w_{n}(v) +  \sum_{v \in V_{4}(S)} L(v)u_{1}(v) + Q(v)u_{2}(v)\hspace{6mm} \\
=& \sum_{v \in V_{2}(S)} w_{n}(v) +  \sum_{v \in V_{4}(S)} w_{n}(v) \\
=& \sum_{v \in S} w_{n}(v) =0
\end{split}
\end{equation*}
\end{proof}

We can extend the basepoint filtration from Section \ref{filtration} to make $C_{F(n)}(S)$ a filtered complex - since we still require discs to have multiplicity 0 at $X_{0}$, the same argument works as in the proof of Lemma \ref{lemma4.1}. Let $d_{i}$ denote the differentials which change the filtration level by $i$. As before, $d_{0}$ must preserve multi-cycles, so the homology $H(C_{F(n)}(S), d_{0})$ splits over the multi-cycles

\[ H(C_{F(n)}(S), d_{0}) = \bigoplus_{Z} H(C_{F(n)}(Z), d_{0}) \]

We want to compute the complex $C_{F(n)}(Z)$. Recall that $C_{F}(Z)$ was computed to be 
\[ \Big[ \bigotimes_{e_{i} \in Z} R \xrightarrow{\hspace{1mm}U_{i}\hspace{1mm}} R \Big] \otimes \Big[ \bigotimes_{v \in W_{2}(Z)} R \xrightarrow{\hspace{1mm}L(v)\hspace{1mm}} R \Big] \otimes \Big[ \bigotimes_{v \in W_{4}(Z)} R \xrightarrow{\hspace{1mm}Q(v)\hspace{1mm}} R \Big] \otimes \Big[ \bigotimes_{v \in V_{4}(S)} R \xrightarrow{\hspace{1mm}L(v)\hspace{1mm}} R \Big]  \]

\noindent
We can therefore compute $C_{F(n)}(Z)$ by adding the new differentials to this complex. It is not hard to see that the only new discs in $\mathit{CFK}_{n}^{-}(Z)$ correspond to bigons containing $X$ or $XX$ basepoints. For example, let $e_{i}$ be an edge in $Z$, with $x$ and $y$ the two intersection points corresponding to $e_{i}$. When we weren't allowing discs to pass through $X$ or $XX$, the only disc connecting $x$ and $y$ was the bigon containing $U_{i}$. This contributed the tensor summand of 

\[ R \xrightarrow{\hspace{5mm}U_{i}\hspace{5mm}} R \]

\noindent
However, when we allow discs to pass through $X$ and $XX$, we get two new bigons which map from $y$ to $x$, shown in Figure \ref{bivalentbigons}.

\begin{figure}[h!]
\centering
\begin{subfigure}{.5\textwidth}
  \centering
   \begin{overpic}[width=.5\textwidth]{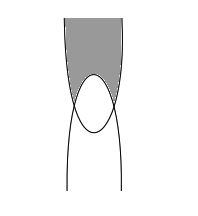} \scriptsize
   \put(40,49){$O_{i}$}
   \put(32.5,49){$\bullet$}
   \put(51,49){$\bullet$}
   \put(29,50){$y$}
   \put(54.8,49.5){$x$}
   \end{overpic}
  \label{fig:sub1}
\end{subfigure}%
\begin{subfigure}{.5\textwidth}
  \centering
  \begin{overpic}[width=.5\textwidth]{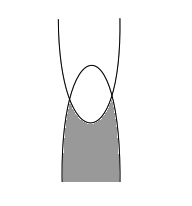} \scriptsize
   \put(40,51){$O_{i}$}
   \put(32,50){$\bullet$}
   \put(52.5,51){$\bullet$}
   \put(28,51){$y$}
   \put(56,51.5){$x$}
  \end{overpic}
  \label{fig:sub2}
\end{subfigure}
\caption{Two new bigons from $y$ to $x$}
\label{bivalentbigons}
\end{figure}

The type of contribution from these bigons depends on whether the endpoint vertices of $e_{i}$ are 2-valent or 4-valent, which is why we only showed the local portion of the bigons in Figure \ref{bivalentbigons}. However, in either case the contribution has a coefficient of degree $n$. We will denote this coefficient by $p(e_{i})$ (the precise polynomial will not be relevant for our computations). The tensor summand then becomes

\[ R \mathrel{ \substack{\xrightarrow{\hspace{5mm}U_{i} \hspace{5mm}} \\[-.4ex] \xleftarrow[\hspace{3.5mm}p(e_{i}) \hspace{3.5mm}]{}}} R \]

For a vertex $v$ in $W_{2}(Z)$, there are two intersection points $x$ and $y$ corresponding to $v$. In $C_{F}(Z)$, they contributed a tensor summand of 

\[ R \xrightarrow{\hspace{5mm}L(v)\hspace{5mm}} R \]

\noindent
In $C_{F(n)}(Z)$, we have an extra differential corresponding to the bigon from $y$ to $x$ through $X$         (See Figure \ref{bivbigon}). Since $X$ carries a coefficient of $u_{1}(v)$, the summand becomes

\[ R \mathrel{ \substack{\xrightarrow{\hspace{5mm}L(v) \hspace{5mm}} \\[-.4ex] \xleftarrow[\hspace{4.5mm}u_{1}(v) \hspace{4.5mm}]{}}} R \]

Similarly, for 4-valent vertices $v$ in $W_{4}(Z)$, $C_{F}(Z)$ contains a tensor summand

\[ R \xrightarrow{\hspace{5mm}Q(v)\hspace{5mm}} R \]

\noindent
In $C_{F(n)}(Z)$, we have en extra differential corresponding to the bigon through $XX$ shown in Figure \ref{4bigon}. The $XX$ contributes a coefficient of $u_{2}(v)$, so the summand becomes

\[ R \mathrel{ \substack{\xrightarrow{\hspace{5mm}Q(v) \hspace{5mm}} \\[-.4ex] \xleftarrow[\hspace{4.5mm}u_{2}(v) \hspace{4.5mm}]{}}} R \]

\begin{figure}[h!]
\centering
\begin{subfigure}{.4\textwidth}
  \centering
  \begin{overpic}[width=.7\textwidth]{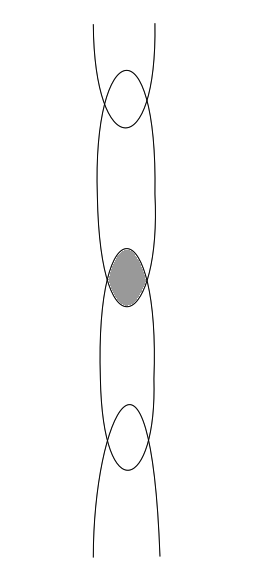} \scriptsize
   \put(19.8,81.5){$O_{1}$}
   \put(20,22.5){$O_{2}$}
   \put(20.3,50.5){$X$}
   \put(13.5,66){$\alpha$}
   \put(13.5,93){$\beta$}
   \put(14,35){$\beta$}
   \put(13.5,5){$\alpha$}
   \put(17.9,50.7){$\bullet$}
   \put(24.7,50.7){$\bullet$}
   \put(16,51){$y$}
   \put(26.5,51){$x$}
   \end{overpic}
   \caption{The bigon through $X$}
    \label{bivbigon}
\end{subfigure}%
\begin{subfigure}{.6\textwidth}
  \centering
   \begin{overpic}[width=.9\textwidth]{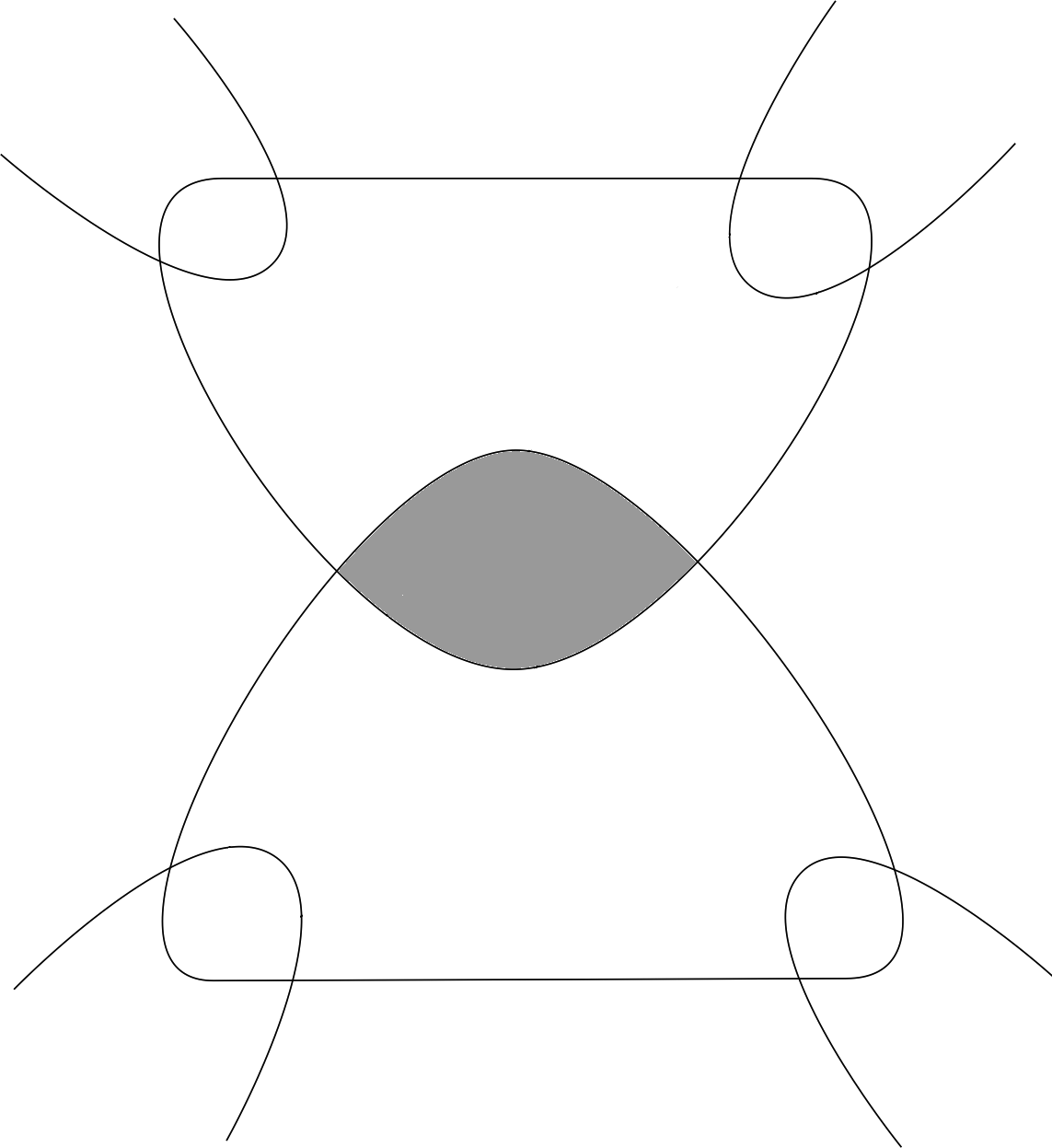} \scriptsize
   \put(17.6,79){$O_{1}$}
   \put(68,79){$O_{2}$}
   \put(19,19.5){$O_{3}$}
   \put(72,19){$O_{4}$}
   \put(41.5,50){$XX$}
   \put(28.5,49.35){$\bullet$}
   \put(60,50.25){$\bullet$}
   \put(26.3,49.8){$y$}
   \put(62,50.5){$x$}
   \put(43,86){$\alpha$}
   \put(1,17){$\alpha$}
   \put(89,18){$\alpha$}
   \put(1,81){$\beta$}
   \put(87,83){$\beta$}
   \put(45,11.5){$\beta$}
   \end{overpic}
   \caption{The bigon through $XX$}
    \label{4bigon}
\end{subfigure}
\caption{New differentials passing through $X$ and $XX$}
\label{fig:test}
\end{figure}

Finally, the complex $K(S)$ gets changed to $K_{n}(S)$, so the whole complex for $C_{F(n)}(Z)$ can be written as 

\[ \Big[ \bigotimes_{e_{i} \in Z} R \mathrel{ \substack{\xrightarrow{\hspace{1.5mm}U_{i} \hspace{1.5mm}} \\[-.4ex] \xleftarrow[\hspace{0mm}p(e_{i}) \hspace{0mm}]{}}} R \Big] \otimes \Big[ \bigotimes_{v \in W_{2}(Z)} R \mathrel{ \substack{\xrightarrow{\hspace{1mm}L(v) \hspace{1mm}} \\[-.4ex] \xleftarrow[\hspace{0.5mm}u_{1}(v) \hspace{0.5mm}]{}}} R \Big] \otimes \Big[ \bigotimes_{v \in W_{4}(Z)} R \mathrel{ \substack{\xrightarrow{\hspace{1mm}Q(v) \hspace{1mm}} \\[-.4ex] \xleftarrow[\hspace{0.5mm}u_{2}(v) \hspace{0.5mm}]{}}} R \Big] \otimes \Big[ \bigotimes_{v \in V_{4}(S)} R \mathrel{ \substack{\xrightarrow{\hspace{1mm}L(v) \hspace{1mm}} \\[-.4ex] \xleftarrow[\hspace{0.5mm}u_{1}(v) \hspace{0.5mm}]{}}} R \Big]  \]

Now that we have our complex computed, we want to compare its homology with $H_{n}(S-Z)$. We will denote the differentials which do not pass through any $X$ or $XX$ basepoints $d_{0+}$, and the new differentials $d_{0-}$. Observe that with respect to the bigrading $(gr_{q},gr_{h})$ introduced in Section \ref{gradings}, $d_{0+}$ has bigrading $\{2,2\}$, and $d_{0-}$ has bigrading $\{2n, -2\}$. 

\begin{lem}

Up to an overall grading shift, the homology $H_{*}(H_{*}(C_{F(n)}(Z),d_{0+}), d_{0-}^{*})$ is isomorphic to $H_{*}(H_{*}(C_{n}(S-Z), d_{+}), d_{-}^{*})$

\end{lem}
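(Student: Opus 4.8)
The plan is to run the cancellation from the proof of Lemma~\ref{bigthm}, but now tracking the splitting $d_0=d_{0+}+d_{0-}$ so as to compare the two spectral sequences page by page rather than just their total homologies.

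\textbf{Step 1 (the $E_1$ pages).} By construction $d_{0+}$ is the part of $d_0$ built from bigons avoiding the $X$ and $XX$ markings, i.e.\ exactly the forward maps $U_i$, $L(v)$, $Q(v)$ in the tensor-product description of $C_{F(n)}(Z)$; these are precisely the maps constituting the differential $d_0$ of $C_F(Z)$. Hence $(C_{F(n)}(Z),d_{0+})=(C_F(Z),d_0)$ as bigraded complexes, and the graded form of Lemma~\ref{bigthm} established in Section~\ref{gradings} gives an isomorphism $H_*(C_{F(n)}(Z),d_{0+})\cong H_H(S-Z)$, up to an overall grading shift. Concretely, the elements $\{U_i : e_i\in Z\}$ form a regular sequence appearing as part of $d_{0+}$, so cancelling them implements the base change $R\to R_Z$ (``setting $U_i=0$ for $e_i\in Z$''), after which the complex over $R_Z$ is $C_H(S-Z)$.

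\textbf{Step 2 (the induced differential $d_{0-}^*$).} The components of $d_{0-}$ are the back-arrows $p(e_i)$ on the edge factors $e_i\in Z$ and the back-arrows $u_1(v),u_2(v)$ on the vertex factors indexed by $W_2(Z)$, $W_4(Z)$ and $V_4(S)$. For the vertex part, the same bookkeeping as in Lemma~\ref{bigthm} shows that modulo $\{U_i=0:e_i\in Z\}$ these become exactly the new $sl_n$ differentials of $S-Z$: for $v\in W_2(Z)$ and $v\in W_4(Z)$ the edges at $v$ are untouched by $Z$, so $w_n(v)$ and the $u_i(v)$ are literally unchanged; and for a $4$-valent vertex $v$ of $S$ carrying one incoming and one outgoing edge of $Z$ (the only remaining possibility since $Z$ is a multi-cycle), the relation $u_1(v)L(v)+u_2(v)Q(v)=w_n(v)$, read in $R_Z$, kills $Q(v)$ and converts $L(v)$ and $w_n(v)$ into their $S-Z$ versions, forcing the reduction of $u_1(v)$ to be the $S-Z$ coefficient $\tilde u_1(v)$. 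For the edge part, the key point is that each $p(e_i)$ moves up the edge Koszul complex $\bigotimes_{e_i\in Z}(R\xrightarrow{U_i}R)$, whose homology is concentrated in a single homological degree (it resolves $R_Z$), so on the $E_1$ page $H_*(C_{F(n)}(Z),d_{0+})$ there is no room for the $p(e_i)$-arrows to act, and they contribute nothing to $d_{0-}^*$. Therefore, under the identification of Step~1, $d_{0-}^*$ is precisely the induced $sl_n$ differential $d_-^*$ on $H_H(S-Z)$.

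\textbf{Step 3 (conclusion).} Since $H_*(C_n(S-Z),d_+)=H_H(S-Z)$ with induced differential $d_-^*$ by the definition of HOMFLY-PT homology, Steps~1--2 give
\[
H_*\big(H_*(C_{F(n)}(Z),d_{0+}),d_{0-}^*\big)\cong H_*\big(H_H(S-Z),d_-^*\big)=H_*\big(H_*(C_n(S-Z),d_+),d_-^*\big)
\]
up to an overall grading shift, which is the statement. One can package this as a filtered homotopy equivalence obtained by applying the homological perturbation lemma to the Step~1 cancellation; only the filtration-degree-one part $d_1=d_-^*$ of the transferred differential affects the $E_2$ page. The main obstacle is the edge-factor part of Step~2: because $U_i$ is a non-zero-divisor but not a unit, the edge factors cannot be Gaussian-eliminated outright, so one must verify carefully --- this is the one phenomenon with no analogue in the proof of Lemma~\ref{bigthm} --- that the new back-arrows $p(e_i)$ do not survive the cancellation, i.e.\ do not contribute to $d_{0-}^*$ and hence leave the $E_2$ page unchanged; the vertex bookkeeping, by contrast, is routine.
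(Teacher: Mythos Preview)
Your proof is correct and follows essentially the same approach as the paper: identify the $E_1$ page via Lemma~\ref{bigthm} and then verify that the induced differential $d_{0-}^*$ matches $d_-^*$ vertex by vertex. Two minor differences are worth noting. First, for the case of a $4$-valent vertex of $S$ becoming $2$-valent in $S-Z$, the paper simply computes $u_1(v)$ directly after setting $U_i=U_k=0$, whereas you deduce the same equality from the defining relation $u_1(v)L(v)+u_2(v)Q(v)=w_n(v)$ read in $R_Z$; these are equivalent since $U_j-U_l$ is a non-zero-divisor in $R_Z$. Second, you explicitly address the edge back-arrows $p(e_i)$ and argue they contribute nothing to $d_{0-}^*$ because the edge Koszul homology is concentrated in a single degree---the paper omits this point entirely. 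Your framing via a filtered homotopy equivalence (the augmentation $K_Z\otimes C'\to R_Z\otimes C'$ is already a filtered chain map for the $\pm$-filtration, since it kills the $p(e_i)$ components and intertwines the reduced $u_j(v)$ with the $S-Z$ ones) is in fact the cleanest way to make this rigorous, and makes the vanishing of the $p(e_i)$ contribution immediate rather than relying on the somewhat informal ``no room to act'' heuristic.
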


\begin{proof}

It follows from Theorem \ref{bigthm} that $H(C_{F(n)}(Z),d_{0+}) \cong H_{H}(S-Z)$. To complete the proof, we need to show that $d_{0-}^{*}$ corresponds to the $d_{-}$ differential under this isomorphism. For vertices which are 2-valent in both $S$ and $S-Z$ (i.e. $v \in W_{2}(Z)$), this is obvious. The same is true for vertices which are 4-valent in both $S$ and $S-Z$ (i.e. $v \in W_{4}(Z)$).

The only identification which is non-trivial is that the $d_{0-}$ differential corresponding to a vertex which is 4-valent in $S$ but 2-valent in $S-Z$ is the same as the $d_{-}$ differential on the 2-valent vertex in $S-Z$. Let $e_{i},e_{j}$ be the outgoing edges at $v$ and $e_{k},e_{l}$ the incoming edges. The multi-cycle $Z$ must contain one incoming and one outgoing edge - without loss of generality, assume $Z$ contains $e_{i}$ and $e_{k}$. The coefficient of the $d_{-}$ differential is given by 

\[ \frac{U_{j}^{n+1}-U_{l}^{n+1}}{U_{j}-U_{l}} \]

\noindent
while the coefficient of the $d_{0-}$ differential is given by 

\[ \frac{U_{i}^{n+1}+U_{j}^{n+1}-U_{k}^{n+1}-U_{l}^{n+1} - Q(v)u_{2}(v)}{U_{i}+U_{j}-U_{k}-U_{l}} \]

\noindent
Recall that to achieve the isomorphism in Theorem \ref{bigthm}, we first cancelled the Koszul complex on the $U_{i}$ for $e_{i}$ in $Z$, as these elements formed a regular sequence. We therefore want to show that these two coefficients are equal in $R_{Z} = R/\{U_{i} = 0 \text{ for } e_{i}  \in Z\} $. Substituting $U_{i}=U_{k}=0$ into the above equation and noting that this causes $Q(v)$ to be zero, we get the desired equality.
\end{proof}

Define $H^{\pm}(C_{F(n)}(S)) = H_{*}(H_{*}(C_{F(n)}(S),d_{0+}), d_{0-}^{*})$. Since both $d_{0+}$ and $d_{0-}$ are homogeneous with respect to the bigrading, this homology is bigraded as well. Applying the lemma and adding in the gradings from Section \ref{gradings}, we see that 
\begin{equation} \label{eqn6}
H^{\pm}(C_{F(n)}(S)) \cong \bigoplus_{Z} H^{\pm}(C_{n}(S-Z)) \{ -2r(S-Z) + T_{2}(Z) - T_{1}(Z),2r(Z)\} 
\end{equation}

Recall from Lemma \ref{gradinglemma} that $H^{\pm}(C_{n}(S-Z))$ lies in a single horizontal grading, namely $2r(S-Z)$. Adding in the shift, the homology corresponding to a multi-cycle $Z$ must lie in horizontal grading $2r(S-Z)+ 2r(Z) = 2r(S)$. But this does not depend on $Z$, so we have shown the following:

\begin{lem}
The homology $H^{\pm}(C_{n}(S))$ lies in a single horizontal grading.
\end{lem}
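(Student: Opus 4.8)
\medskip

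The plan is to recognize this as the lemma of Rasmussen recorded in Section \ref{sect2.3} (the input to Corollary \ref{gradinglemma}), and to carry out the argument in the grading conventions used here. Recall the setup: $C_{n}(S)=\bigotimes_{v}C_{n}(v)$, the differential splits as $d_{+}+d_{-}$ with $d_{+}$ of bigrading $\{2,2\}$ and $d_{-}$ of bigrading $\{2n,-2\}$, and $H^{\pm}(C_{n}(S))=H_{*}(H_{*}(C_{n}(S),d_{+}),d_{-}^{*})$ is the $E_{2}$ page of Rasmussen's spectral sequence from $H_{H}(S)$ to $H_{n}(S)$. The first step is to invoke \cite{Rasmussen}: for the MOY graph $S$ this $E_{2}$ page can be computed directly, and it is a free $\Q$-module whose graded dimension is the $sl_{n}$ evaluation of $S$; in particular it is concentrated in a single internal homological grading. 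The second step is to match gradings: under the dictionary between Rasmussen's conventions and ours spelled out in the remark after Corollary \ref{gradinglemma} (the shift by $2$ from unreducedness and the omitted global shift of $r(S)-1$), this internal homological grading becomes exactly the horizontal grading $gr_{h}$, and the single value it takes is $gr_{h}=2r(S)$. This gives the statement, together with the refinement (the explicit grading $2r(S)$) that is used implicitly in Section \ref{gradings} and in Equation \ref{eqn6}.

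One can also attempt a more self-contained route, which I would lay out as an alternative but ultimately not use. Induct on the number of $4$-valent vertices of $S$, with base case Lemma \ref{sl1} (and its obvious extension: $H_{1}$ of a disjoint union of oriented circles is $\Q$ in a single bigrading, namely $\{0,2r\}$, and $H_{1}$ of any singular braid carrying a $4$-valent vertex vanishes). For the inductive step, combine Wagner's \emph{singly graded} ($gr_{n}$) composition product $H_{m+n}(S)\cong\bigoplus_{f}H_{n}(S_{f,1})\otimes H_{m}(S_{f,2})\{\sigma_{m,n}(f)\}$ with the identity $r(S_{f,1})+r(S_{f,2})=r(S)$ from Section \ref{bigradedcomp}: each summand on the right is built from an $H_{1}$ factor concentrated in a known horizontal grading and an $H_{m}$ factor which, by the inductive hypothesis, is concentrated in horizontal grading $2r(S_{f,2})$, so each summand lives in horizontal grading $2r(S_{f,1})+2r(S_{f,2})=2r(S)$.

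The genuine obstacle is why this induction closes. Using the \emph{bigraded} composition product of Section \ref{bigradedcomp} would be circular, since that bigraded formula was itself derived from the fact that $H_{n}(S)$ lies in a single horizontal grading. To close the induction honestly one must know that the singly graded isomorphism above is realized by a map that does not mix $gr_{q}$ with $gr_{h}$ — i.e. that the shift $\sigma_{m,n}(f)$ can be taken purely in $gr_{q}$ — and establishing this for Wagner's isomorphism is essentially as much work as Rasmussen's direct computation of $H^{\pm}(C_{n}(S))$ on MOY graphs. For this reason the efficient proof is simply to cite \cite{Rasmussen}, exactly as the surrounding discussion does; the role of the present lemma is only to re-record that fact in our conventions so that it can be fed into the decomposition \eqref{eqn6}.
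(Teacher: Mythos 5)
The lemma you have proved is not the one being asserted here. The statement in question is the unnumbered lemma that immediately follows Equation~(\ref{eqn6}); despite the notation $C_{n}(S)$ (which appears to be a typo in the source), in context it concerns $H^{\pm}(C_{F(n)}(S))$ --- the $E_{2}$ page of the $(d_{0+},d_{0-})$ spectral sequence on the knot Floer complex with the additional $sl_{n}$-type differentials --- and its role is to kill the higher differentials so that $H_{*}(C_{F(n)}(S),d)\cong H^{\pm}(C_{F(n)}(S))$. The claim for the Khovanov--Rozansky complex $C_{n}(S)$, which is what both of your routes address, is the earlier lemma in Section~\ref{sect2.3}; that one is simply quoted from \cite{Rasmussen} with no proof given, so ``cite Rasmussen and match conventions'' is indeed all that is expected there, and your remark about the circularity of trying to rederive it from the bigraded composition product of Section~\ref{bigradedcomp} is a fair one.

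What the paper actually does for the present lemma, and what your write-up is missing, is a short argument on top of that cited input: by Equation~(\ref{eqn6}), $H^{\pm}(C_{F(n)}(S))$ decomposes over multi-cycles $Z$ as $\bigoplus_{Z}H^{\pm}(C_{n}(S-Z))$ with a horizontal grading shift of $2r(Z)$ on the $Z$-summand; Rasmussen's lemma applied to each $S-Z$ places that summand in horizontal grading $2r(S-Z)$ before the shift, hence in $2r(S-Z)+2r(Z)=2r(S)$ after it; and since $2r(S)$ is independent of $Z$, the whole direct sum is concentrated in the single horizontal grading $2r(S)$. The two ingredients you would need to add are the decomposition~(\ref{eqn6}) itself and the additivity $r(S-Z)+r(Z)=r(S)$ of rotation numbers under removing a multi-cycle.
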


The original differentials on $C_{F}(S)$ all have bigrading $\{2,2\}$. The new differentials on $\mathit{CFK}_{n}^{-}(S)$ have bigrading $\{2+2k(n-1), 2-4k\}$, where $k$ is the sum of the multiplicities of the holomorphic discs at all $X$ and $XX$ markings. The new differentials on $K_{n}(S)$ all have bigrading $\{2n, -2\}$. Thus, all differentials on $C_{n}(S)$ change the horizontal grading by 2 (mod 4). This implies that no induced differentials can have horizontal grading 0, which tells us that the remaining differentials on our complex are all trivial, giving us the following:

\begin{lem}

The total homology $H_{*}(C_{F(n)}(S),d)$ is isomorphic to $H^{\pm}(C_{F(n)}(S))$.

\end{lem}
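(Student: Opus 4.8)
The plan is to realize $H^{\pm}(C_{F(n)}(S))$ as a page of a spectral sequence abutting to $H_{*}(C_{F(n)}(S),d)$ and to show, for grading reasons, that every subsequent differential vanishes --- mirroring the collapse argument for Rasmussen's spectral sequence in Section~\ref{sect2.3}.

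First I would set up the filtration. Combine the basepoint filtration of Section~\ref{filtration} with the Alexander filtration (equivalently, the filtration by $gr_{q}-gr_{h}$), giving the basepoint level much greater weight, so that a component of $d$ is filtration-preserving exactly when it preserves the basepoint level \emph{and} has bigrading $\{2,2\}$ --- that is, exactly when it is a component of $d_{0+}$. From the bigradings $\{2,2\}$, $\{2n,-2\}$, and $\{2+2k(n-1),2-4k\}$ ($k\ge 1$) of the three kinds of components of $d$, every other component either raises the basepoint level or lowers the Alexander grading by a positive multiple of $n+1$; choosing the lexicographic weight large enough that $d_{0-}$ has the smallest positive filtration degree, the first two nontrivial pages of the associated spectral sequence are $H_{*}(C_{F(n)}(S),d_{0+})$ and then $H_{*}(H_{*}(C_{F(n)}(S),d_{0+}),d_{0-}^{*})=H^{\pm}(C_{F(n)}(S))$. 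The basepoint filtration is bounded (the complex is finitely generated over $R$), and in the Alexander direction a degree-$k$ induced differential lowers $gr_{h}$ by roughly $4k$ while $C_{F(n)}(S)$ is bounded in horizontal grading; so --- as in Section~\ref{sect2.3} --- only finitely many induced differentials are nonzero and the spectral sequence converges to $H_{*}(C_{F(n)}(S),d)$.

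The key step is that all induced differentials on the $H^{\pm}$ page and later pages vanish. Every component of $d$ --- the original $\{2,2\}$ differentials on $C_{F}(S)$, the new differentials on $\mathit{CFK}^{-}_{n}(S)$ of bigrading $\{2+2k(n-1),2-4k\}$, and the new differentials on $K_{n}(S)$ of bigrading $\{2n,-2\}$ --- changes the horizontal grading by an amount congruent to $2$ modulo $4$, hence by a nonzero amount; consequently so does every induced differential on every page, since each is assembled from components of $d$. But by the preceding lemma $H^{\pm}(C_{F(n)}(S))$ is concentrated in a single horizontal grading, namely $gr_{h}=2r(S)$, so any nonzero induced differential would have to map this page to a different horizontal grading --- impossible since the source and target are both this page. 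Therefore the spectral sequence collapses at $E=H^{\pm}(C_{F(n)}(S))$, and since we work over $\Q$ the induced filtration on $H_{*}(C_{F(n)}(S),d)$ splits, giving $H_{*}(C_{F(n)}(S),d)\cong H^{\pm}(C_{F(n)}(S))$ as bigraded vector spaces.

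I expect the main obstacle to be the organizational first step rather than the grading argument: one must pin down a single filtration whose associated graded differential is precisely $d_{0+}$ --- not the full $\{2,2\}$ part of $d$, which also includes the basepoint-raising Floer discs --- and confirm that the $\{2n,-2\}$ components appearing at the next filtration level are exactly $d_{0-}$ (i.e.\ that the new discs passing through $X$ or $XX$ with multiplicity $\ge 2$ do not lie in filtration degree one), so that the relevant page really is $H^{\pm}(C_{F(n)}(S))$ and not the homology with respect to some coarser truncation of $d$. Once the filtration is in place, the vanishing of higher differentials is exactly the argument already used for Rasmussen's spectral sequence.
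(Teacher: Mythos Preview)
Your proposal is correct and follows the same approach as the paper. The paper's argument, given in the paragraph immediately preceding the lemma, is precisely your grading argument: every component of $d$ changes $gr_{h}$ by $2\pmod 4$, hence no induced differential can preserve $gr_{h}$, and since $H^{\pm}(C_{F(n)}(S))$ lies in the single horizontal grading $2r(S)$ all higher differentials vanish. Your explicit construction of a combined basepoint--Alexander filtration (and your caveat about ensuring that its associated graded picks out exactly $d_{0+}$ and then $d_{0-}$) fills in a spectral-sequence setup that the paper leaves implicit, but the substance of the proof is identical.
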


This isomorphism is singly-graded with grading $gr_{n} = gr_{q}+(n-1)gr_{h}/2$, as the total differential on $C_{F(n)}(S)$ is homogeneous of degree $n+1$ with respect to this grading.

Going back to (\ref{eqn6}), we know that as bigraded vector spaces, we have the isomorphism 
\[  H^{\pm}(C_{F(n)}(S)) \cong \bigoplus_{Z} H^{\pm}(C_{n}(S-Z)) \{ -2r(S-Z) + T_{2}(Z) - T_{1}(Z),2r(Z)\}  \] 

\noindent
We can use Lemma \ref{sl1} in the same way as in the previous section to exchange the cycle notation for labelings by tensoring with $sl_{1}$ homology.
\[  H^{\pm}(C_{F(n)}(S)) \cong \bigoplus_{f} H_{1}(S_{f,1}) \otimes H_{n}(S_{f,2}) \{ -2r(S_{f,2}) + T_{2}(S_{f,1}) - T_{1}(S_{f,1}), 0 \}  \] 

\noindent
Applying the bigraded composition product formula (\ref{wagnercomp}), this gives an isomorphism of bigraded vector spaces

\begin{equation} \label{slnlemma}
H^{\pm}(C_{F(n)}(S)) \cong H_{n+1}(S)  
\end{equation}

Since $H_{*}(C_{F(n)}(S),d) \cong H^{\pm}(C_{F(n)}(S))$ as graded vector spaces with grading $gr_{n}$, this gives an isomorphism

\[ H_{*}(C_{F(n)}(S),d) \cong H_{n+1}(S) \]

\noindent
where we are viewing $H_{n+1}(S)$ as singly graded, with grading $gr_{n}$. Since singly graded $sl_{n+1}$ homology is typically viewed with respect to the grading $gr_{n+1}$, this isn't quite what we want. Fortunately, since the homology is concentrated in horizontal grading $gr_{h} = 2r(S)$, we see that $gr_{n+1} = gr_{n} + r(S)$. Thus, up to an overall grading shift, the formula remains true in the standard grading $gr_{n+1}$.

\begin{thm}

Up to an overall grading shift, the total homology $H_{*}(C_{F(n)}(S),d)$ is isomorphic to $H_{n+1}(S)$. 

\end{thm}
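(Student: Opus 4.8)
The plan is to assemble the statement from the ingredients already developed in this subsection; the theorem is in effect a summary of the chain of identifications established for the filtered complex $C_{F(n)}(S)$. First I would invoke the basepoint filtration: since $d_0$ preserves multi-cycles, the associated graded homology splits as $H(C_{F(n)}(S),d_0)\cong\bigoplus_Z H(C_{F(n)}(Z),d_0)$, and the explicit tensor-product description of $C_{F(n)}(Z)$ (a product of two-step matrix-factorization pieces over the edges of $Z$, the vertices of $W_2(Z)$ and $W_4(Z)$, and the complex $K_n(S)$) lets me compare it vertex by vertex with the $sl_n$ complex $C_n(S-Z)$. Taking homology first with respect to $d_{0+}$ and then with respect to the induced $d_{0-}^{*}$, the vertex-wise matching yields equation~(\ref{eqn6}), namely $H^{\pm}(C_{F(n)}(S))\cong\bigoplus_Z H^{\pm}(C_n(S-Z))\{-2r(S-Z)+T_2(Z)-T_1(Z),\,2r(Z)\}$ as bigraded vector spaces, where the grading shift is read off from Section~\ref{gradings}.

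Second I would handle convergence of the two spectral sequences (the basepoint filtration one and the $(d_{0+},d_{0-})$ one) at once. By Corollary~\ref{gradinglemma} each $H^{\pm}(C_n(S-Z))$ is concentrated in horizontal grading $2r(S-Z)$, so after the shift in~(\ref{eqn6}) the whole of $H^{\pm}(C_{F(n)}(S))$ lies in the single horizontal grading $2r(S-Z)+2r(Z)=2r(S)$; since every differential appearing on $C_{F(n)}(S)$ (the original ones of bigrading $\{2,2\}$, the new $\mathit{CFK}^-_n(S)$ differentials of bigrading $\{2+2k(n-1),2-4k\}$, and the $K_n(S)$ differentials of bigrading $\{2n,-2\}$) changes $gr_h$ by $2$ modulo $4$, no induced higher differential can preserve $gr_h$, so all of them vanish. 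This gives $H_*(C_{F(n)}(S),d)\cong H^{\pm}(C_{F(n)}(S))$, an isomorphism of groups singly graded by $gr_n=gr_q+(n-1)gr_h/2$, since the total differential is homogeneous of that degree.

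Third I would rewrite the cycle-indexed sum as a labeling-indexed sum. By Lemma~\ref{sl1}, $H_1(S_{f,1})$ is one-dimensional, supported in bigrading $\{0,2r(S_{f,1})\}$, exactly when $S_{f,1}$ is a multi-cycle and is zero otherwise, so the $Z$-summands correspond bijectively to labelings $f$ with $S_{f,1}=Z$, $S_{f,2}=S-Z$, and the grading shifts match. Thus $H^{\pm}(C_{F(n)}(S))\cong\bigoplus_f H_1(S_{f,1})\otimes H_n(S_{f,2})\{-2r(S_{f,2})+T_2(S_{f,1})-T_1(S_{f,1}),\,0\}$, which is precisely the right-hand side of the bigraded composition product~(\ref{wagnercomp}) (taken with $m=n$) for $H_{n+1}(S)$. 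Combining the displays gives $H_*(C_{F(n)}(S),d)\cong H_{n+1}(S)$ as bigraded vector spaces; to finish I would note that, since everything is concentrated in $gr_h=2r(S)$, there one has $gr_{n+1}=gr_n+r(S)$, so the isomorphism also holds with respect to the standard $sl_{n+1}$ grading after an overall shift.

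The main obstacle is the vertex-wise comparison underlying~(\ref{eqn6}): identifying the $d_{0-}$ differential associated to a vertex $v$ that is $4$-valent in $S$ but $2$-valent in $S-Z$ with the genuine $d_-$ differential of the corresponding $2$-valent vertex of $S-Z$. This comes down to checking that, after cancelling the regular sequence $\{U_i : e_i\in Z\}$ and passing to $R_Z$, the coefficient $(U_i^{n+1}+U_j^{n+1}-U_k^{n+1}-U_l^{n+1}-Q(v)u_2(v))/(U_i+U_j-U_k-U_l)$ reduces to $(U_j^{n+1}-U_l^{n+1})/(U_j-U_l)$ once $U_i=U_k=0$ (which also forces $Q(v)=0$). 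The remaining points — the splitting along multi-cycles, the horizontal-grading parity argument for collapse, Lemma~\ref{sl1}, and the invocation of the composition product — are formal once this local computation is in hand.
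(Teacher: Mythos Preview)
Your proposal is correct and follows essentially the same route as the paper: establish equation~(\ref{eqn6}) via the vertex-wise comparison of $C_{F(n)}(Z)$ with $C_n(S-Z)$, use the horizontal-grading parity argument to collapse all higher differentials and identify $H_*(C_{F(n)}(S),d)$ with $H^{\pm}(C_{F(n)}(S))$, then convert the cycle sum to a labeling sum via Lemma~\ref{sl1} and apply the composition product~(\ref{wagnercomp}). You have also correctly isolated the one nontrivial local check (the $4$-valent-to-$2$-valent coefficient reduction in $R_Z$) and the final $gr_n$ versus $gr_{n+1}$ bookkeeping.
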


\begin{cor}

For all $n \ge 1$, there is a spectral sequence whose $E_{1}$ page is $H_{F}(S)$ which converges to $H_{n+1}(S)$.

\end{cor}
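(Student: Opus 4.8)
The plan is to produce the spectral sequence directly from the Alexander filtration on $C_{F(n)}(S)$. By the preceding theorem we already know $H_{*}(C_{F(n)}(S),d) \cong H_{n+1}(S)$, so it suffices to exhibit a filtration of the chain complex $(C_{F(n)}(S),d)$ whose associated graded complex is $C_{F}(S)$; the resulting spectral sequence will then have $E_{1} \cong H_{F}(S)$ and converge to $H_{n+1}(S)$.

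First I would record how each piece of the differential $d$ on $C_{F(n)}(S)$ behaves with respect to the Alexander grading $A$, using $A = (gr_{h}-gr_{q})/2$ from the conventions of Section~\ref{gradings}. The original knot Floer differential and the Koszul differential $L(v)$ both have $(gr_{q},gr_{h})$-bidegree $\{2,2\}$, hence preserve $A$; these are exactly the differentials of $C_{F}(S)$. The new differentials counting discs through the $X$ and $XX$ basepoints have bidegree $\{2+2k(n-1),\,2-4k\}$ with $k\ge 1$, and the new differentials on $K_{n}(S)$ have bidegree $\{2n,-2\}$; a short computation shows each of these strictly \emph{decreases} $A$ (by $k(n+1)$ and by $n+1$ respectively). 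Consequently $d$ is non-increasing in $A$, the subspaces of $C_{F(n)}(S)$ spanned by elements of Alexander grading $\le p$ form a nested exhaustive family of subcomplexes, and the associated graded differential is precisely the $A$-preserving part of $d$, i.e. the differential of $C_{F}(S)$. Thus $E_{0}$ is $C_{F}(S)$ with its own differential and $E_{1} \cong H_{*}(C_{F}(S)) = H_{F}(S)$.

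It then remains to check that this spectral sequence converges to the total homology $H_{*}(C_{F(n)}(S),d) \cong H_{n+1}(S)$, and this is the one point that needs care, since the Alexander filtration is not obviously bounded below (multiplication by the $U_{i}$ lowers $A$ without bound). The resolution is that $C_{F(n)}(S)$, having the same underlying $R$-module as $C_{F}(S) = \mathit{CFK}^{-}(S)\otimes K(S)$, is bounded in the horizontal grading: $\mathit{CFK}^{-}(S)$ is finitely generated over $R$, multiplication by each $U_{i}$ preserves $gr_{h}=-2M+4A$, and $K(S)$ is a finite tensor product of two-term complexes. Since the complex is also finite-dimensional in each bidegree, it is finite-dimensional in each $gr_{n}$-degree, where $gr_{n}=gr_{q}+(n-1)gr_{h}/2$ is the grading preserved (up to the degree-$(n+1)$ shift) by the total differential. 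Hence in each $gr_{n}$-degree the Alexander filtration has only finitely many levels, the spectral sequence degenerates after finitely many pages, and it converges to $H_{*}(C_{F(n)}(S),d) \cong H_{n+1}(S)$. Equivalently, one can argue as in Section~\ref{sect2.3}: every higher-page differential is induced by a component of $d$ that strictly decreases $gr_{h}$, while all pages are bounded in $gr_{h}$, so all but finitely many higher differentials vanish.

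The main obstacle is exactly this convergence issue; once the $gr_{h}$-boundedness of $C_{F(n)}(S)$ is in hand it is routine, and the bidegree bookkeeping in the middle paragraph is elementary given the grading conventions already established.
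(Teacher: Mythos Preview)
Your proposal is correct and follows essentially the same argument as the paper: use the Alexander filtration on $C_{F(n)}(S)$, observe that the original differentials preserve $A$ while the new ones (through $X$, $XX$, and on $K_{n}(S)$) strictly decrease $A$ by multiples of $n+1$, so the associated graded is $C_{F}(S)$ and $E_{1}\cong H_{F}(S)$. The paper's proof simply asserts convergence without comment, so your careful treatment of that point via $gr_{h}$-boundedness (or equivalently finite-dimensionality in each $gr_{n}$-degree) is a welcome addition rather than a divergence in approach.
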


\begin{proof}

All of the original differentials on $C_{F}(S)$ have Alexander grading $0$. The new differentials on $\mathit{CFK}^{-}_{n}$ have Alexander grading $k(-n-1)$, where $k$ is the sum of the multiplicities of the disc at the $X$ and $XX$ basepoints, and the new differentials on the Koszul complex have Alexander grading $-n-1$. In particular, all of the new differentials strictly decrease the Alexander grading, so it induces a filtration with respect to which the filtered homology is $H_{*}(C_{F}(S))$. Thus, the corresponding spectral sequence has $E_{1}$ page $H_{F}(S)$, and converges to the total homology $H(C_{F(n)}(S)) \cong H_{n+1}(S)$.
\end{proof}

\subsection{Proof of Manolescu's Conjecture}

At this point, we have three spectral sequences: one from HOMFLY-PT homology to $sl_{n}$ homology, one from HOMFLY-PT homology to knot Floer homology, and one from knot Floer homology to $sl_{n}$ homology. Diagrammatically, this looks like

\begin{figure}[H]
\centering
\begin{tikzpicture}
\node(HOMFLY){$H_{H}(S)<1>$};
\node (HFK) at ([shift={(5,0)}]HOMFLY) {$H_{F}(S)$};
\node (sln) at ([shift={(2.7,-2.3)}]HOMFLY) {$H_{n}(S)$};
\path[-stealth] 
  (HOMFLY) edge (HFK)
  (HOMFLY) edge (sln)
  (HFK) edge (sln);
\end{tikzpicture}
\end{figure}

\noindent
where the arrows correspond to spectral sequences. Since both HOMFLY-PT homology and knot Floer homology have spectral sequences going to $sl_{n}$ homology for all $n \ge 2$, it is clear that they have a great deal in common. The conjecture of Manolescu is that they are in fact isomorphic. Since we have a spectral sequence from HOMFLY-PT homology to knot Floer homology, this is equivalent to the spectral sequence collapsing at the $E_{1}$ page. In this section, we will prove Manolescu's conjecture.

\begin{thm}
The HOMFLY-PT homology $H_{H}(S)<1>$ and knot Floer homology $H_{F}(S)$ are isomorphic as bigraded vector spaces.
\end{thm}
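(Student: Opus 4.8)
The plan is to use the three spectral sequences assembled above and the finite-dimensionality constraints to force the spectral sequence from $H_{H}(S)<1>$ to $H_{F}(S)$ to collapse at its $E_{1}$ page. The key point is that we already know $H_{H}(S)<1>$ and $H_{F}(S)$ \emph{both} abut to $sl_{n}$ homology for every $n\geq 2$ (via Rasmussen's spectral sequence for HOMFLY-PT and via the $C_{F(n)}(S)$ construction for knot Floer), and we have an explicit spectral sequence from the former to the latter. First I would set up the basepoint-filtration spectral sequence $E_{k}(D)$ with $E_{1} \cong H_{H}(S)<1>$ (Theorem \ref{filterediso}) converging to $H_{F}(S)$, together with its differentials, and track gradings carefully: the basepoint differential $d_{i}$ raises the filtration level by $i\geq 1$ and, as computed in the grading discussion, shifts the bigrading $(gr_{q}, gr_{h})$ in a controlled way. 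Because everything is finite-dimensional in each bigrading and bounded in horizontal grading, a dimension count on each page is meaningful.

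The second step is to play off the $sl_{n}$ targets. For each $n$, the $d_{-}(n)$ differential on $H_{H}(S)$ (of bigrading $\{2n,-2\}$, or after the shift $\{2n-2,-2\}$ on $H_{H}(S)<1>$) recovers $H_{n}(S)$ as bigraded vector spaces; and the analogous Alexander-filtration argument on $C_{F(n)}(S)$ shows $H_{F}(S)$ abuts to $H_{n+1}(S)$. The strategy is to argue that the $E_{\infty}$ page of the basepoint spectral sequence, namely $H_{F}(S)$, has the \emph{same graded dimension} as $E_{1} = H_{H}(S)<1>$ in every bigrading, by comparing both against $sl_{n}$ for $n$ large relative to the bigrading in question. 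Concretely: fix a bigrading $(i,j)$; since the complexes are bounded below in $q$-grading and the $d_{-}(n)$ / $d_{0-}(n)$ differentials have $q$-degree growing linearly in $n$, for $n$ sufficiently large these differentials into and out of $(i,j)$ land in trivial groups, so $\dim H_{H}^{i,j}(S)<1> = \dim H_{n}^{i,j}(S)$ and likewise $\dim H_{F}^{i,j}(S) = \dim H_{n+1}^{i,j}(S)$ for $n$ large — this is exactly the Euler-characteristic-in-a-line trick used in the proof of Lemma \ref{thelemma}. Matching the indices (the $sl_{n}$ versus $sl_{n+1}$ discrepancy is a uniform grading shift, as noted at the end of the last subsection) gives $\dim H_{H}^{i,j}(S)<1> = \dim H_{F}^{i,j}(S)$ in each bigrading.

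From equal total graded dimensions it follows that the basepoint spectral sequence degenerates: since $E_{\infty}$ is a subquotient of $E_{1}$ and the two have equal (finite) dimension in each bigrading, no higher differential can be nonzero, so $E_{1} = E_{\infty}$ and $H_{H}(S)<1> \cong H_{F}(S)$ as bigraded vector spaces. I expect the main obstacle to be the bookkeeping in the dimension comparison — specifically, making precise that for a \emph{given} bigrading the relevant $sl_{n}$ and $sl_{n+1}$ computations stabilize simultaneously and agree after the known grading shift, and checking that the boundedness hypotheses (bounded in horizontal grading, bounded below in quantum grading, finite-dimensional in each bigrading) genuinely hold for $H_{H}(S)<1>$, $H_{F}(S)$, and all the intermediate pages, so that the Euler-characteristic argument applies verbatim. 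Once those finiteness and stabilization facts are nailed down, the collapse is formal.
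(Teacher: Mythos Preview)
Your overall strategy---comparing graded dimensions of $H_{H}(S)<1>$ and $H_{F}(S)$ against their common $sl_{n}$ targets and then using the subquotient relation to force collapse---is viable and genuinely different from the paper's proof, but the argument as you have written it contains a real gap. The intermediate claims ``$\dim H_{H}^{i,j}(S)<1> = \dim H_{n}^{i,j}(S)$'' and ``$\dim H_{F}^{i,j}(S) = \dim H_{n+1}^{i,j}(S)$'' for $n$ large are simply false: $H_{n}(S)$ is concentrated in the single horizontal grading $gr_{h}=2r(S)$, while $H_{H}(S)<1>$ and $H_{F}(S)$ are not. The Euler-characteristic-in-a-line trick of Lemma~\ref{thelemma} does \emph{not} establish pointwise equality with $H_{n}$; it compares two spaces that \emph{each} carry a single differential of bigrading $\{2n,-2\}$ down to $H_{n}(S)$, by induction on the minimal horizontal grading where they differ. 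For $H_{F}(S)$ you do not have such a differential: the Alexander-filtration spectral sequence to $H_{n+1}(S)$ has higher pages $d_{r}$ of bigrading $\{2+2r(n-1),\,2-4r\}$, so there is no common ``line,'' and the hypotheses of Lemma~\ref{thelemma}/Corollary~\ref{cor2.3} are not verified for $H_{F}(S)$.

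The repair is to bypass the pointwise comparison with $H_{n}$ and compare $H_{F}$ to $H_{H}<1>$ directly. Every differential in both spectral sequences (Rasmussen's $d_{-}(n)$ on $H_{H}<1>$ and each $d_{r}$ in the Alexander spectral sequence on $H_{F}$) has $gr_{n}$-degree $n+1$ and odd $gr_{h}/2$-degree, so for each $n$ the $gr_{n}$-graded, $(-1)^{gr_{h}/2}$-signed Euler characteristics of $H_{H}<1>$ and $H_{F}$ both equal that of $H_{n+1}(S)$, hence agree with one another. Setting $e^{i,j}=\dim H_{H}^{i,j}<1> - \dim H_{F}^{i,j}\geq 0$ (nonnegativity from the basepoint spectral sequence, all of whose differentials have bigrading $\{2,2\}$) and rerunning the minimal-$j_{0}$ induction from Lemma~\ref{thelemma} on $e^{i,j}$ then forces $e\equiv 0$. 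By contrast, the paper's proof never counts dimensions: it assumes the first nonzero basepoint differential $d_{F}=d_{0a}^{*}$ exists on $H_{H}(S)<1>$, uses that $d_{F}$ anticommutes with $d_{n}=d_{10}^{*}$, picks $x$ at the minimal horizontal grading where $d_{F}(x)=y\neq 0$, and then for $N$ large shows $d_{N}d_{F}(x)\neq 0$ while $d_{F}d_{N}(x)=0$, contradicting anticommutation. Your corrected approach trades this element-level contradiction for a global Euler-characteristic count; both hinge on the same boundedness and large-$n$ stabilization.
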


\begin{proof}

We will start with the complex $C_{F(n)}$ from the previous section. There are two filtrations defined on this complex so far - the one induced by the Alexander grading, and the one induced by the basepoints. Consider the associated bigraded object from these two filtrations.

The differentials always change the Alexander grading by a multiple of $n+1$, so let $d_{ij}$ denote those differentials which change the Alexander grading by $i(n+1)$ and change the basepoint grading by $j$. 

Theorem \ref{filterediso} states that 

\[ H_{*}(C_{F(n)}, d_{00}) \cong H_{H}(S)<1> \]

\noindent
Since $H_{F}(S) \cong H_{*}(C_{F(n)}, d_{0*})$, our theorem is equivalent to $d_{0k}^{*}$ being zero on \linebreak $H_{*}(C_{F(n)}, d_{00})$. We will prove this by contradiction. In particular, suppose that some $d_{0k}^{*}$ is non-zero, and let $a$ denote the smallest such $k$. 

Because $a$ is minimal, it is clear that $d_{0a}^{*}$ and $d_{10}^{*}$ anti-commute, as $d^{*}_{0a} \circ d^{*}_{10}$ and $d^{*}_{10} \circ d^{*}_{0a}$ are the only components of $d^{2}$ which change the basepoint filtration by $a$ and the Alexander filtration by $n+1$. These are the differentials that we will be interested in, so we will rename them. We will write $d_{F}$ instead of $d^{*}_{0a}$ and $d_{n}$ instead of $d^{*}_{10}$, since $d^{*}_{10}$ depends on $n$. The differentials $d_{F}$ and $d_{n}$ both act on $H_{*}(C_{F(n)}, d_{00})$, so using the above isomorphism we will view them as acting on $H_{H}(S)<1>$.

We know from (\ref{slnlemma}) that there is an isomorphism of bigraded vector spaces

\[ H_{*}(H_{H}(S)<1>, d_{n}) \cong H_{n+1}(S) \]

To summarize our setup, we have a family of differentials $d_{n}$ on $H_{H}(S)<1>$, each having bigrading $(2n, -2)$, such that the homology with respect to each is $H_{n+1}$, and there is a differential $d_{F}$ on $H_{H}(S)<1>$ with bigrading $(2,2)$ which is non-trivial and anti-commutes with each $d_{n}$.

We know that the smallest horizontal grading in which $H_{H}(S)<1>$ is non-trivial is $2r(S)$, and that the homology $H_{*}(H_{H}(S)<1>, d_{n})$ lies only in this horizontal grading. Let $h_{min}$ be the minimal horizontal grading on which $d_{F}$ is non-zero, and let $x$ be an element of $H_{H}(S)<1>$ in bigrading $(q,h_{min})$ for some $q$ with $d_{F}(x) \ne 0$. Define $d_{F}(x)=y$. 

We know also that $H_{H}(S)<1>$ is bounded below in quantum grading and $d_{n}$ changes the quantum grading by $2n$, so choose $N$ sufficiently large that $y$ can not be in the image of $d_{N}$. Since $y$ lies in horizontal grading $h_{min}+2$ (in particular, not $h_{min}$), it follows that $y \in Ker(d_{N})$ iff $y \in Im(d_{N})$. Thus, $y$ is not in the kernel of $d_{N}$.

But then $ d_{N} \circ d_{F}(x)$ is non-zero, while $d_{F} \circ d_{N}(x)$ must be zero because $d_{N}(x)$ lies in horizontal grading $h_{min}-2$, and $d_{F}=0$ for all horizontal gradings less than $h_{min}$, which contradicts the fact that $d_{N}$ and $d_{F}$ anticommute.
\end{proof}

We relate this theorem to Manolescu's conjecture with the following corollary.

\begin{cor}

There is an isomorphism of Tor groups

\[ \mathit{Tor}_{R}(R/L, R/N) \cong \mathit{Tor}_{R}(R/L, R/Q)  \]

\noindent
as bigraded vector spaces. The bigrading on $\mathit{Tor}_{R}(R/L, R/N)$ is given by $(q, h)$ where $q$ is the quantum grading coming from the polynomial ring and $h$ is the homological grading, and the bigrading on $\mathit{Tor}_{R}(R/L, R/Q)$ is $(q+h, h)$.
\end{cor}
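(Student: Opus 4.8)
The plan is to translate the isomorphism $H_{H}(S)<1> \cong H_{F}(S)$ just established into the Tor-theoretic language, using Manolescu's algebraic descriptions from the introduction: $H_{H}(S) \cong \mathit{Tor}_{R}(R/L, R/Q)$ and $H_{F}(S) \cong \mathit{Tor}_{R}(R/L, R/N)$ as bigraded vector spaces. Granting these, the corollary is purely a matter of chasing the bigradings through the composite and checking that the abstract shift encoded by the notation $<1>$ is exactly the reparametrization $(q,h) \mapsto (q+h, h)$.

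First I would pin down the bigradings on both sides. On the HOMFLY-PT side, $\mathit{Tor}_{R}(R/L, R/Q)$ carries the internal quantum grading $q$ coming from $R = \Q[U_{1},\dots,U_{k}]$ (with each $U_{i}$ in degree $2$) together with the homological grading $h$ from the resolution computing Tor; under Manolescu's identification these are precisely the gradings $gr_{q}$ and $gr_{h}$ of Section \ref{HOMFLYsection}. On the knot Floer side, $\mathit{Tor}_{R}(R/L, R/N)$ with its analogous $(q,h)$ bigrading is identified with $H_{F}(S) = H_{*}(C_{F}(S), d_{0}^{F} + \cdots + d_{k}^{F})$ equipped with the $(gr_{q}, gr_{h})$ grading introduced in Section \ref{gradings}. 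With these dictionaries in place, the theorem gives an isomorphism $H_{H}(S)<1> \cong H_{F}(S)$ respecting $(gr_{q}, gr_{h})$, and since $H_{H}(S)<1>$ is by definition $H_{H}(S)$ with grading $(gr_{q}+gr_{h}, gr_{h})$, an element of $\mathit{Tor}_{R}(R/L, R/Q)$ in bidegree $(q,h)$ matches an element of $\mathit{Tor}_{R}(R/L, R/N)$ in bidegree $(q+h, h)$ — which is exactly the statement of the corollary.

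The only genuinely delicate point is the bookkeeping of \emph{overall} grading shifts. Several of the intermediate identifications — Theorem \ref{filterediso}, the passage through $H^{\pm}(C_{n}(S))$, the composition-product formulas, and the relation \eqref{slnlemma} — are stated only up to an overall shift, and Manolescu's Tor identifications themselves carry normalizing shifts depending on the braid index. I would therefore need to fix the normalizations once and for all (using the choice made in Section \ref{gradings}, where the empty-cycle summand is arranged to agree with $C_{H}(S)$ on the nose and an explicit $\{-2r(S),0\}$ correction is inserted) and check that the accumulated shift is precisely $(q,h)\mapsto(q+h,h)$ with no residual constant. This is the step most likely to require care; once it is settled, the remainder of the argument is a formal substitution.
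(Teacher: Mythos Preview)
Your argument has a genuine gap: you apply Manolescu's Tor identifications directly to the paper's $H_{H}(S)$ and $H_{F}(S)$, but these are \emph{not} the objects Manolescu computed. The paper's Heegaard diagram differs from Manolescu's by an extra unknotted component at infinity (with the marked edge placed there and that variable set to zero), and the paper's HOMFLY-PT complex is the unreduced version rather than the middle version. Consequently the paper's homologies are related to Manolescu's by
\[
H_{F}(S) \cong H_{F}^{M}(S) \otimes V, \qquad H_{H}(S)<1> \cong H_{H}^{M}(S)<1> \otimes V,
\]
with $V = \Q\{-1,-1\}\oplus\Q\{1,1\}$. This is not an overall grading shift; it doubles the rank in each bidegree, so it is not covered by the bookkeeping you describe. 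The Tor groups are $\mathit{Tor}_{R}(R/L,R/N) \cong H_{F}^{M}(S)$ and $\mathit{Tor}_{R}(R/L,R/Q) \cong H_{H}^{M}(S)$, so to reach the corollary you must first pass from the main theorem $H_{F}(S)\cong H_{H}(S)<1>$ to the statement $H_{F}^{M}(S)\cong H_{H}^{M}(S)<1>$, i.e.\ cancel the common factor of $V$.

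The paper handles this by observing that both $H_{F}^{M}(S)$ and $H_{H}^{M}(S)$ are bounded in $h$-grading and bounded below in $q$-grading; under these finiteness conditions, an isomorphism $A\otimes V \cong B\otimes V$ of bigraded vector spaces forces $A\cong B$. Once that cancellation is made, your grading-chasing argument goes through as written. So the missing step is exactly this reduction from the paper's unreduced setup to Manolescu's middle/marked-edge setup, together with the (easy but necessary) cancellation lemma for $-\otimes V$.
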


\begin{proof}

The difference between our complex $C_{F}(S)$ and Manolescu's knot Floer complex for $S$ is that we have added an additional unknotted component at infinity, placed the marked edge on that component, and reduced that component (i.e. set that $U_{i}$ equal to zero).

Manolescu instead placed the marked edge on the leftmost strand of the braid. Let $C^{M}_{F}(S)$ denote Manolescu's complex. We know that adding an unknotted component and reducing it doubles the homology. In particular, with respect to the $(q,h)$ bigrading there is an isomorphism

\[ H_{F}(S) \cong H^{M}_{F}(S) \otimes V  \]

\noindent
where $V = \Q \{-1, -1\} \oplus \Q \{1, 1\}$.

Similarly, the middle HOMFLY-PT homology can be viewed as the unreduced HOMFLY-PT homology of the 1-1 tangle obtained  by breaking an edge in the diagram. Let $C_{H}^{M}(S)$ denote the middle HOMFLY-PT homology of $S$. Then, with respect to the $(q,h)$ grading, we have the following isomorphism

\[ H_{H}(S) \cong H^{M}_{H}(S) \{0,-1\} \oplus H^{M}_{H}(S) \{0,1\} \]

\noindent
But when we switch to the $(q+h, h)$ grading, this becomes 

\[ H_{H}(S)<1> \cong H^{M}_{H}(S)<1> \otimes V  \]

The previous theorem states that $H_{F}(S) \cong H_{H}(S)<1>$. Using the above arguments, this becomes

\[  H^{M}_{F}(S) \otimes V \cong H^{M}_{H}(S)<1> \otimes V  \] 

\noindent
as bigraded groups.

Since all of our theories are bounded in $h$-grading and bounded below in $q$-grading, the above isomorphism implies an isomorphism without tensoring with $V$.

\[  H^{M}_{F}(S) \cong H^{M}_{H}(S)<1>   \]

But Manolescu showed that $H^{M}_{F}(S) \cong \mathit{Tor}_{R}(R/L, R/N)$ and 

\noindent
$H^{M}_{H}(S) \cong \mathit{Tor}_{R}(R/L, R/Q)$, which proves the corollary.
\end{proof}

Another significant corollary of this result relates to the $E_{2}$ page of the spectral sequence on $C_{F}(D)$ induced by the cube filtration. This is the page which was conjectured by Manolescu to give HOMFLY-PT homology. In \cite{Me}, we showed that the graded Euler characteristic of the homology

\[ E_{2}^{f}(D) = H_{*}(H_{*}(C_{F}(D), d_{0}^{f}), (d_{1}^{f})^{*})  \]

\noindent
is the HOMFLY-PT polynomial, where $d_{i}^{f}$ denotes the component of the differential on $C_{F}(D)$ which increases the cube grading by $i$ and preserves the basepoint filtration. In particular, we define the triple grading on this complex by the $i$, $j$, and $j$ gradings, where $k$ denotes twice the cube grading, and 

\[ i = 2A-2M-k \]
\[ j = 4A-2M-k \]

\noindent
where $M$ and $A$ are the Maslov and Alexander gradings, respectively. With respect to this triple grading, we showed that 

\[ \sum_{i,j,k} (-1)^{(k-j)/2} dim(E_{2}^{f}(D)^{i,j,k} ) = P_{H}(aq, q, D) \]

 But we have just seen that $H_{*}(C_{F}(D), d_{0}^{f}) \cong H_{*}(C_{F}(D), d_{0})$, so there is a spectral sequence from $H_{*}(H_{*}(C_{F}(D), d_{0}^{f}), (d_{1}^{f})^{*})$ to the $E_{2}$ page $H_{*}(H_{*}(C_{F}(D), d_{0}), d_{1}^{*})$. But all of these differentials have triple grading $\{0, 0, 2\}$, so they do not change the Euler characteristic. Thus we have shown the following:
 
\begin{cor}

Let $C_{F}(D)$ denote the oriented cube of resolutions complex for a braid diagram $D$, and let $E_{2}(D)$ denote the $E_{2}$ page of the spectral sequence on $C_{F}(D)$ induced by the cube filtration. Then the graded Euler characteristic of $E_{2}(D)$ with the triple grading given above is the HOMFLY-PT polynomial $P_{H}(aq, q, D)$.

\end{cor}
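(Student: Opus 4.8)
The plan is to reduce the statement to the computation already carried out in \cite{Me}, by comparing two spectral sequences on $C_F(D)$ that differ only in whether one remembers the basepoint filtration. Recall from \cite{Me} that if $d_0^f$ and $d_1^f$ denote the pieces of the differential on $C_F(D)$ that preserve the basepoint filtration and raise the cube grading by $0$ and by $1$ respectively, then
\[ E_2^f(D) = H_*(H_*(C_F(D), d_0^f), (d_1^f)^*) \]
has graded Euler characteristic $P_H(aq,q,D)$ for the triple grading $(i,j,k)$ with $i = 2A-2M-k$, $j = 4A-2M-k$, and $k$ twice the cube grading. So it suffices to prove $\chi(E_2(D)) = \chi(E_2^f(D))$, where $\chi$ denotes the alternating sum $\sum (-1)^{(k-j)/2}\dim(\cdot)$.

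Second, I would use the main theorem vertex by vertex. At each resolution $S$ the basepoint spectral sequence on $C_F(S)$ collapses at its first page, because the bigraded dimensions of $H_*(C_F(S), d_0^f)$ and $H_*(C_F(S), d_0)$ agree (that is exactly Manolescu's conjecture, now proved) and dimensions cannot increase along a spectral sequence; summing over the vertices of the cube gives a bigraded isomorphism $\mathcal{E} := H_*(C_F(D), d_0^f) \cong H_*(C_F(D), d_0)$. The complex $(\mathcal{E}, d_1^*)$ then computes $E_2(D)$, it inherits the basepoint filtration, and the first page of the resulting spectral sequence is $H_*(\mathcal{E}, (d_1^f)^*) = E_2^f(D)$. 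This produces a spectral sequence running from $E_2^f(D)$ to $E_2(D)$.

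Third, I would track gradings through this comparison spectral sequence. Every one of its differentials is induced by (a piece of) $d_1^*$, which raises the cube grading by $1$, drops the Maslov grading by $1$, and preserves the Alexander grading; substituting $\Delta M = -1$, $\Delta A = 0$, $\Delta k = 2$ into the formulas for $i$ and $j$ gives $\Delta i = \Delta j = 0$, i.e. each differential has triple degree $\{0,0,2\}$. Such a differential shifts $(k-j)/2$ by $1$, so it pairs each generator with one of opposite sign in $\chi$, and hence $\chi$ is unchanged from page to page. Therefore $\chi(E_2(D)) = \chi(E_2^f(D)) = P_H(aq,q,D)$, which is the corollary.

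The step I expect to be most delicate is the second one: verifying that the basepoint filtration genuinely descends to $\mathcal{E} = H_*(C_F(D), d_0)$ and interacts correctly with $d_1^*$ so that the comparison spectral sequence is well defined, and checking its convergence. The basepoint filtration is not bounded above a priori — the same phenomenon noted in Section \ref{sect2.3} — so one must argue, exactly as there, that boundedness of $C_F(D)$ in the horizontal grading bounds the lengths of the induced differentials and hence forces convergence. Once this bookkeeping is in place, the rest is the routine grading computation sketched above.
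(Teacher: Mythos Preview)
Your proposal is correct and follows essentially the same route as the paper: cite \cite{Me} for $\chi(E_2^f(D))=P_H(aq,q,D)$, use the main theorem to identify $H_*(C_F(D),d_0^f)\cong H_*(C_F(D),d_0)$, deduce a spectral sequence from $E_2^f(D)$ to $E_2(D)$, and observe that its differentials all have triple degree $\{0,0,2\}$ so the Euler characteristic is preserved. The paper's argument is terser --- it simply asserts the spectral sequence and the grading of its differentials --- whereas you spell out the vertex-by-vertex collapse, the explicit grading arithmetic, and flag the convergence bookkeeping; these are exactly the details one would need to fill in, and your identification of the descent of the basepoint filtration as the delicate point is apt.
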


\bibliography{TriplyGradedHomology}{}

\begin{thebibliography}{10}

\bibitem{Akram}
Akram Alishahi and Eaman Eftekhary.
\newblock {A refinement of sutured Floer homology}.
\newblock {\em Journal of Symplectic Geometry}, 13(3), 2015.

\bibitem{Me}
Nathan Dowlin.
\newblock {The knot Floer cube of resolutions and the composition product}.
\newblock {\em Preprint arXiv:1508.03037}, 2015.

\bibitem{Gukov}
N.~Dunfield, S.~Gukov, and J.~Rasmussen.
\newblock {The superpolynomial for knot homologies}.
\newblock {\em Experimental Math}, 15(2):129--160, 2006.

\bibitem{HOMFLY}
P.~Freyd, D.~Yetter, W.B.R.~Lickorish J.~Hoste, K.~Millett, and A.~Ocneanu.
\newblock {A new polynomial invariant of knots and links}.
\newblock {\em Bulletin of the American Mathematical Society}, 12(2):243--298,
  1985.

\bibitem{Gilmore}
Allison Gilmore.
\newblock {Framed graphs and the non-local Ideal in the knot Floer cube of
  resolutions}.
\newblock {\em Algebraic and Geometric Topology}, 15:1239--1302, 2015.

\bibitem{Hughes}
Mark~C. Hughes.
\newblock {A note on Khovanov-Rozansky $sl_{2}$-homology and ordinary Khovanov
  homology}.
\newblock {\em Journal of Knot Theory and its Ramifications}, 23:1387--1425,
  2014.

\bibitem{Jaeger}
Francois Jaeger.
\newblock {Composition product and models for the HOMFLY polynomial}.
\newblock {\em Enseign. Math.}, 35:323--361, 1989.

\bibitem{Khov1}
Mikhail Khovanov.
\newblock {A categorification of the Jones polynomial}.
\newblock {\em Duke Mathematical Journal}, 101:359--426, 2000.

\bibitem{KR}
Mikhail Khovanov and Lev Rozansky.
\newblock {Matrix factorizations and link homology}.
\newblock {\em Fundamenta Mathematicae}, 199:1--99, 2008.

\bibitem{KR2}
Mikhail Khovanov and Lev Rozansky.
\newblock {Matrix factorizations and link homology II}.
\newblock {\em Geometry and Topology}, 12:1387--1425, 2008.

\bibitem{Manolescu}
Ciprian Manolescu.
\newblock {An untwisted cube of resolutions for knot Floer homology}.
\newblock {\em Quantum Topology}, 5:185--223, 2014.

\bibitem{OS1}
Peter Ozsv\'{a}th and Zolt\'{an} Szab\'{o}.
\newblock {Holomorphic discs and knot invariants}.
\newblock {\em Advances in Mathematics}, pages 58--116, 2004.

\bibitem{OS3}
Peter Ozsv\'{a}th and Zolt\'{an} Szab\'{o}.
\newblock {Holomorphic discs, link invariants, and the multi-variable Alexander
  polynomial}.
\newblock {\em Algebraic and Geometric Topology}, 8:615--692, 2008.

\bibitem{Szabo}
Peter Ozsv\'{a}th and Zolt\'{an} Szab\'{o}.
\newblock {A cube of resolutions for knot Floer homology}.
\newblock {\em Journal of Topology}, 2009.

\bibitem{PT}
J.~H. Przytycki and P.~Traczyk.
\newblock {Invariants of links of Conway type}.
\newblock {\em Kobe Journal of Mathematics}, 4:115--139, 1987.

\bibitem{Rasmussen2}
Jacob Rasmussen.
\newblock {Floer homology and knot complements}.
\newblock {\em PhD Thesis, Harvard University}, 2003.

\bibitem{Rasmussen}
Jacob Rasmussen.
\newblock {Some differentials on Khovanov-Rozansky homology}.
\newblock {\em Preprint arXiv:0607544v2}, 2006.

\bibitem{Wagner}
Emmanuel Wagner.
\newblock {Khovanov-Rozansky graph homology and composition product}.
\newblock {\em Preprint arXiv:math/0702230}, 2007.

\end{thebibliography}
\bibliographystyle{plain}

\end{document}